\begin{document}

\title{Difference of convex algorithms for bilevel programs with applications in hyperparameter selection\thanks{This paper is dedicated to the memory of Olvi L. Mangasarian. } \thanks{The research of the first author was partially supported by NSERC. The second author was supported by a General Research Fund from Hong Kong Research Grants Council. The third author was  supported by the Pacific Institute for the
Mathematical Sciences (PIMS). The last author was supported by  NSFC (No. 12222106), Shenzhen Science and Technology Program (No. RCYX20200714114700072) and the
Guangdong Basic and Applied Basic Research Foundation
(No. 2022B1515020082) .}
}

\titlerunning{Difference of convex algorithms for bilevel programs}        

\author{Jane J. Ye        \and
        Xiaoming Yuan \and Shangzhi Zeng \and Jin Zhang
}


\institute{Jane J. Ye \at
              Department of Mathematics and Statistics, University of Victoria, Canada. \\
              \email{janeye@uvic.ca}           
           \and
           Xiaoming Yuan \at
           Department of Mathematics,  The University of Hong Kong, Hong Kong SAR, China.\\
           \email{xmyuan@hku.hk}
           \and
           Shangzhi Zeng \at
           Department of Mathematics and Statistics, University of Victoria, Canada.\\
           \email{zengshangzhi@uvic.ca}
           \and
           Corresponding author. Jin Zhang \at
           Department of Mathematics, SUSTech International Center for Mathematics,
           Southern University of Science and Technology, National Center for Applied Mathematics Shenzhen, Peng Cheng Laboratory, Shenzhen, China.\\
           \email{zhangj9@sustech.edu.cn}
}

\date{Received: date / Accepted: date}

\maketitle

\begin{abstract}
In this paper, we present difference of convex algorithms for solving bilevel programs in which the upper level objective functions are difference of convex functions, and the lower level programs are fully convex. 
This nontrivial class of bilevel programs provides a powerful modelling framework for dealing with  applications arising from hyperparameter selection in machine learning. Thanks to the full convexity of the lower level program,  the value function of the lower level program turns out to be convex and hence the bilevel program can be reformulated as a difference of convex bilevel program. We propose two algorithms for solving the reformulated difference of convex program and show their convergence to stationary points under very mild assumptions. Finally we conduct numerical experiments to a bilevel model of support vector machine classification. 

\keywords{Bilevel program \and difference of convex algorithm \and hyperparameter selection, {bilevel model of support vector machine classification}   }
\subclass{90C26 \and 90C30 }
\end{abstract}

\section{Introduction}	

Bilevel programs are a class of hierarchical optimization problems which have constraints containing a lower-level optimization problem  parameterized by upper-level variables. Bilevel programs capture a wide range of important applications in various fields including Stackelberg games and moral hazard problems in economics (\cite{Mirrlees,Stackelberg}),  hyperparameter selection and meta learning  in machine learning (\cite{Franceschi,Kunapuli,kunapuli2008classification,kunapuli2008bilevel,Liu,Liu2,Mooreth,Moore,HO2020}).  More applications can be found in the monographs \cite{bard1998practical,Dempe2002,dempe2015bilevel,Shimizu},  the survey on bilevel optimization \cite{ColsonMarcotteSavard,Dempebook} and the references within.

In this paper, we  develop some numerical algorithms for solving the following  difference of convex (DC) bilevel program:
\begin{equation*}\label{general_problem}
	{\rm (DCBP)}~~~~~~~~~~\begin{aligned}
		\min_{x \in \mathbb{R}^n, y \in \mathbb{R}^m}  ~~& F(x,y):=F_1(x,y)-F_2(x,y) \\
		s.t. ~~~~& x\in X,  y \in S(x),
	\end{aligned}
\end{equation*}
with $S(x)$ being the set of optimal solutions of the lower level problem,
\[
\begin{aligned}
	(P_x): \quad \min_{y \in Y}~&f(x,y) \\s.t.~& g(x,y) \le 0,
\end{aligned}
\]
where   $X \subseteq \mathbb{R}^n$ and $Y \subseteq \mathbb{R}^m$ are nonempty closed sets, $g:=(g_1,\dots, g_l)$,  all  functions $g_i : \mathbb{R}^n \times \mathbb{R}^m \rightarrow \mathbb{R}, ~ i = 1,\ldots,l$ are  convex  on  an open convex set containing the set $X\times Y$, and 
the functions $F_1,F_2, f : \mathbb{R}^n \times \mathbb{R}^m \rightarrow \mathbb{R}$ are convex  on an open convex set containing the set
$$C:=\{(x,y)\in X\times Y: g(x,y) \le 0\}.$$ To ensure the  bilevel program is well-defined,  we assume that $S(x)\not =\emptyset$ for all $x\in X$.
{Moreover we assume that   for all  $x$ in  an open convex set   ${\cal O}\supseteq X$,  the feasible region for the lower level program
	${\cal F}(x):=\{y\in Y: g(x,y)\leq 0\}$ is nonempty and  the lower level objective function $f(x,y)$ is bounded below on ${\cal F}(x)$.} 
	
Although the objective function  in the DC bilevel program we consider must be a DC function,  this setting is general enough to capture many cases of practical interests. 
In particular
any  lower $C^2$ function (i.e., a function which can be  locally written as a supremum of a family of $C^2$ functions) and $C^{1+}$ function  (i.e., a differentiable function whose gradient is locally Lipschitz continuous) are DC functions and the class of DC functions is closed under many operations encountered frequently in optimization; see, e.g., \cite{DC_overview,ThiDinh}. In the lower level program, we assume all functions are fully convex, i.e., convex in both variables $x$ and $y$. However as pointed out by \cite[Example 1 and Section 5]{LLNashBilevel},  using some suitable reformulations one may turn a non-fully convex lower level program into a fully convex one. Also as demonstrated in this paper, the bilevel model for hyperparameter selection problem can be reformulated as a bilevel program where the lower level is fully convex.

Solving bilevel programs numerically is extremely challenging. It is known that even when all defining functions are linear, the computational complexity is already NP-hard \cite{BenAyed}.
If all defining functions are smooth and the lower level program is convex with respect to the lower level variable, the first order approach was popularly used to replace the lower level problem by its first order optimality condition  and to solve the resulting problem as the mathematical program with equilibrium constraints (MPEC);  see e.g. \cite{allende2013solving,bard1998practical,Dempebook,MPEC1,MPEC2}. The first order approach may be problematic since  it may not provide an equivalent reformulation to the original bilevel program if  only local (not global) optimal solutions are considered; see \cite{Dem-Dut}. Moreover even in the case of a fully convex lower level program, \cite[Example 1]{LLNashBilevel} shows that it is still possible that a local optimal solution of  the corresponding MPEC does not correspond to a local optimal solution of the original bilevel program. Recently some numerical algorithms have been introduced for solving bilevel programs where the lower level problem is not necessarily convex in the lower level variable; see e.g., \cite{LinXuYe,nie2017bilevel,nie2021bilevel}. However these approaches have limitations in the numbers of variables in the bilevel program.
In most of  literature on numerical algorithms for solving bilevel programs, smoothness of all defining functions are assumed.
In some special cases, non-smoothness can be dealt with by introducing auxiliary variables and constraints to reformulate a nonsmooth lower level program as a smooth constrained lower level program. But using such an approach  the numbers of variables or constraints would increase. 


Our research on the DC bilevel program is motivated by a number of important applications in model selection and hyperparameter learning. Recently in the statistical learning, the regularization parameters has been successfully used, e.g., in the  least absolute shrinkage and selection operator (lasso) method for regression and support vector machines (SVMs) for classification. However the regularization parameters have to be set {\it a priori} and the choice of these parameters dramatically affects results on the model selection. The most commonly used method for selecting these parameters is the so-called $T$-fold cross validation.
By $T$-fold cross validation, a data set $\Omega$ is {randomly} partitioned into $T$ pairwise disjoint subsets called the validation sets  $\Omega_{val}^t$, $t=1,\dots, T$.
For  each fold $t=1,\dots, T$, a subset of $\Omega$ denoted by  $\Omega_{trn}^t:= \Omega\backslash\Omega_{val}^t$ is used for training and  the validation set  $\Omega_{val}^t$ is used for testing the result. 
Take the SVM problem for example,  suppose the data set $\Omega=\{ (\mathbf{a}_j,b_j)\}_{j=1}^\ell  $ where $\mathbf{a}_j \in \mathbb{R}^{n},$ and the labels $b_j = \pm 1$ indicate the class membership.  For each hyperparameters $\lambda>0, \bar{\mathbf{w}}$ and each fold $t=1,\dots, T$, the following SVM problem can be solved. 
$$(P_{\lambda,  \bar{\mathbf{w}}}^t)~~~~~\underset{\tiny \begin{matrix}
					-\bar{\mathbf{w}} \le \mathbf{w} \le \bar{\mathbf{w}} \\ c \in \mathbb{R}
						\end{matrix} }{\mathrm{min}}  \left\{ \frac{\lambda}{2}\|\mathbf{w}\|^2 + \sum_{j\in \Omega_{trn}^t}\max( 1 -b_j( \mathbf{a}_j^T\mathbf{w}-c),0) \right \}.$$
						The desirable hyperparameters  $ \lambda^*$ and  $ \bar{\mathbf{w}}^*$ can be selected by minimizing some measure of validation accuracy over all folds
						such as
						$$\Theta( \mathbf{w}^1, \dots,\mathbf{w}^T ,\mathbf{c}) := \frac{1}{T} \sum_{t=1}^{T} \frac{1}{|\Omega_{val}^t|} \sum_{j\in \Omega_{val}^t}\max(1-b_j(\mathbf{a}_j^T\mathbf{w}^t_{\lambda,  \bar{\mathbf{w}}}-c^t_{\lambda,  \bar{\mathbf{w}}}),0),  $$
						where $|M|$ denotes the number of elements in set $M$ and $(\mathbf{w}^t_{\lambda,  \bar{\mathbf{w}}},  c^t_{\lambda,  \bar{\mathbf{w}}})$ denotes a solution to the SVM problem $(P_{\lambda,  \bar{\mathbf{w}}}^t)$.  Here the cross validation error is based on the hinge loss function.  Other possible functions that  can be used for cross validation error  can be found in \cite{Bennett,Kunapuli,kunapuli2008classification}.
In fact,  the hyperparameter selection for SVM has been proposed as the following  bilevel program with $T$ lower level programs by \cite{Kunapuli,kunapuli2008classification}:
\begin{equation*}\label{SVBP}
	\begin{aligned}
		&\min
		_{\lambda,\bar{\mathbf{w}},\mathbf{w}^1,\dots, \mathbf{w}^T, \mathbf{c}} 
		~~ 
		\Theta( \mathbf{w}^1, \dots,\mathbf{w}^T ,\mathbf{c})\\
		&~~ \qquad \begin{aligned}
			 s.t. ~~ &\lambda_{lb} \le \lambda \le \lambda_{ub}, \quad \bar{\mathbf{w}}_{lb} \le \bar{\mathbf{w}} \le \bar{\mathbf{w}}_{ub},   \\
			& \mathrm{and~for}~t = 1,\ldots,T:\\
			& (\mathbf{w}^t, c^t) \in \underset{\tiny \begin{matrix}
					-\bar{\mathbf{w}} \le \mathbf{w} \le \bar{\mathbf{w}} \\ c \in \mathbb{R}	\end{matrix} }{\mathrm{argmin}} \left\{ \frac{\lambda}{2}\|\mathbf{w}\|^2 + \sum_{j\in \Omega_{trn}^t}\max( 1 -b_j( \mathbf{a}_j^T\mathbf{w}-c),0) \right\},
		\end{aligned}
	\end{aligned}
\end{equation*}
where
$\mathbf{c} \in \mathbb{R}^T$ is the vector with
$c^t$ as the $t$th component.
Here $\lambda_{lb}, \lambda_{ub} $  are given  positive numbers and $ \bar{\mathbf{w}}_{lb}, \bar{\mathbf{w}}_{ub}$ are given  vectors in $\mathbb{R}^n$. It is easy to see that by changing the variable $\lambda$ to  $\mu := \frac{1}{\lambda}$ 
we can reformulate the above SV bilevel model selection  equivalently as the following bilevel program with a single lower level program
\begin{equation*}
{\rm (SVBP)}~~~~~~~~	\begin{aligned}
		&\min
		_{\mu,\bar{\mathbf{w}},\mathbf{w}^1,\dots, \mathbf{w}^T, \mathbf{c}} 
		~~ 
		\Theta( \mathbf{w}^1, \dots,\mathbf{w}^T ,\mathbf{c})\\
		&~~ \qquad \begin{aligned}
			 s.t. ~~ &\frac{1}{\lambda_{ub}} \le \mu \le \frac{1}{\lambda_{lb}}, \quad \bar{\mathbf{w}}_{lb} \le \bar{\mathbf{w}} \le \bar{\mathbf{w}}_{ub},   \\
			& (\mathbf{w}^1,\dots,  \mathbf{w}^T,\mathbf{c}) \in S(\mu ,\bar{\mathbf{w}}),  
		\end{aligned}
	\end{aligned}
\end{equation*}
where $S(\mu, \bar{\mathbf{w}})$ is the set of optimal solutions of the lower level problem
$$(P_{\mu, \bar{\mathbf{w}}})~~~~~ \underset{\tiny \begin{matrix}
					-\bar{\mathbf{w}} \le \mathbf{w}^t \le \bar{\mathbf{w}} \\ c^t \in \mathbb{R}\\
					t=1,\dots, T	\end{matrix} }{\mathrm{min}} \left\{ \sum_{t=1}^T \left (\frac{\|\mathbf{w}^t\|^2}{2\mu} + \sum_{j\in \Omega_{trn}^t} \max( 1 -b_j( \mathbf{a}_j^T\mathbf{w}^t-c^t),0) \right )\right\}.$$
Moreover using the fact that a function in the form $\phi(\mathbf{x}, \mu)= {\|\mathbf{x}\|^2}/{\mu}$ with $\mu>0$ is convex as a perspective function \cite[Example 3.18]{cvxbook}, the above bilevel program has a fully convex lower level program and all required assumptions hold; see the details in Section \ref{sec_SV}. 
The classical $T$-fold cross validation method for selecting hyperparameters usually implements a {\it grid search}: training $T$ models at each point of a discretized parameter space in order to find an approximate optimal parameter. This method has many drawbacks and limitations. In particular its computational complexity scales exponentially with the number of hyperparameters and the number of grid points for each hyperparameter. Hence the grid search method is not practical for problem requiring several hyperparameters, including our SV  bilevel model selection where the hyperparameters $(\mu,\bar{\mathbf{w}})$ are $n + 1$ dimentional.  To deal with limitations of grid search, introduced first in \cite{Bennett} in 2006,  the bilevel program has been used to model hyperparameter selection problems in \cite{Bennett,Kunapuli,kunapuli2008classification,kunapuli2008bilevel,Mooreth,Moore}.

The above fully convex transformation using the perspective function can be applied to some other model hyperparameter selection problems, for example, the $T$-fold cross validation Lasso problem.

This paper is motivated by an interesting fact that, under our problem setting, the value function of the lower level in (DCBP) defined  by
\begin{equation*}\label{optimal_value_function}
	v(x) := \inf_{y \in Y} \left\{ f(x,y) ~ s.t.~ g(x,y) \le 0  \right\}
\end{equation*}
is convex and locally Lipschitz continuous on $X$.
We take full advantage of this convexity and use 
the value function approach first proposed in \cite{Outrata1990} for a numerical purpose and further used to study optimality conditions in \cite{ye1995} to
reformulate (DCBP)  as the following DC program:
\begin{equation*}\label{general_reformulated_problem}
	\begin{aligned}
		{\rm (VP)}  \quad\qquad \min_{(x,y)\in C} ~~& F_1(x,y)-F_2(x,y)\\
		s.t. ~~~~& f(x,y) - v(x) \le 0.
	\end{aligned}
\end{equation*}
Unfortunately, due to the value function constraint,  (VP) violates the usual constraint qualification such as the nonsmooth Mangasarian Fromovitz constraint qualification (MFCQ) at each feasible point, see \cite[Proposition 3.2]{ye1995} for the smooth case and Proposition \ref{Prop4.3} for the nonsmooth case. It is well-known that  convergence of the difference of convex algorithm (DCA) is only guaranteed under 
constraint qualifications such as the extended MFCQ,  which is MFCQ extended to infeasible points; see, e.g., \cite{le2014dc}. To deal with this issue, we consider the following approximate bilevel program
\begin{equation*}\label{general_reformulated_problem_epsilon}
	\begin{aligned}
		({\rm VP})_{\epsilon} \quad\qquad \min_{(x,y)\in C} ~~& F_1(x,y)-F_2(x,y)\\
		s.t. ~~~~& f(x,y) - v(x) \le \epsilon,
	\end{aligned}
\end{equation*}
for some $\epsilon>0$. Such a relaxation strategy has been used for example in \cite{LinXuYe} based with the reasoning  that in numerical algorithms one usually obtain an inexact optimal solution anyway and the solutions of $({\rm VP})_{\epsilon}$ approximate a solution of the original bilevel program (VP) as $\epsilon$ approaches zero. In this paper we will show that EMFCQ  holds for problem $({\rm VP})_{\epsilon}$ when $\epsilon>0$ automatically. Hence we propose to solve problem $({\rm VP})_{\epsilon}$ with $\epsilon\geq 0$. When $\epsilon>0$, the convergence of our algorithm to stationary points is guaranteed and when  $\epsilon=0$, the convergence is not 
guaranteed but it could still converge if the penalty parameter sequence is bounded.

Using DCA approach, at each iterate point $(x^k,y^k)$,  one  linearises the concave part of the function, i.e., the functions $F_2(x,y), v(x)$ by using an element of the subdifferentials $\partial F_2(x^k,y^k), \partial v(x^k)$ and solve a resulting convex subproblem. 
The value function is an implicit function. How do we obtain an element of the subdifferential  $\partial v(x^k)$? At current iterate $x^k$, assuming we can solve  the lower level problem $(P_{x^k})$ with a global minimizer $\tilde{y}^k$ and  a corresponding Karush-Kuhn-Tucker (KKT) multiplier denoted by $\gamma^k$. Suppose that 
the following partial derivative formula holds:
\begin{eqnarray}
	\partial f( x, y)= \partial_x f( x, y)\times \partial_y f( x, y) ,&& 
	\partial g_i( x,y)= \partial_x g_i( x, y)\times \partial_y g_i( x,y) \label{partialdnew1}
\end{eqnarray}
at $(x,y)=(x^k,\tilde{y}^k)$.
Then since by convex analysis $$ \partial_x f(x^k,\tilde{y}^k) + \sum_{i = 1}^{l} \gamma^k_i \partial_x g_i(x^k,\tilde{y}^k)\subseteq \partial v(x^k),$$ we can select an element of  $\partial v(x^k)$ from the set $$\partial_x f(x^k,\tilde{y}^k) + \sum_{i = 1}^{l} \gamma^k_i \partial_x g_i(x^k,\tilde{y}^k)$$ and use it to linearize the value function. We then solve the resulting convex subproblem approximately to obtain a new iterate $(x^{k+1},y^{k+1})$. 
Thanks to recent developments in large-scale convex programming, using this approach we can deal with  a large scale DC bilevel program.  

Now we summarize  our contributions as follows.
\begin{itemize}

	\item We propose two new algorithms for solving DC program. These algorithms have modified the classical DCA in two ways. First, we add a proximal term in each convex subproblem so the the objective function is strongly convex and at each iterate point, only an approximate solution for the convex subproblem is solved. Second, our penalty parameter update is simplier.
	\item We have laid down all theoretical foundations from convex analysis that are required for our algorithms to work. In particular we have demonstrated that  under the minimal assumptions that we specify   for problem (DCBP), the value function is convex and locally Lipschitz on set $X$ automatically.
	\item Using the two new algorithms for solving DC program, we propose two corresponding algorithms to solve problem (DCBP). Our algorithms hold under very mild and natural assumptions. In particular we allow  all defining functions to be nonsmooth and we do not require any constraint qualification to hold for the lower level program.
	The main assumptions we need are only the partial derivative formula (\ref{partialdnew1}) which holds under many practical situations (see Proposition \ref{partiald} for sufficient conditions) and the existence of a KKT multiplier for the lower level program under each iteration. 
	\item Taking advantage of large scale convex programming, our algorithm can handle high dimensional hyperparameter selection   problems. To test effectiveness of our algorithm, we have tested it in {the SV bilevel model selection (SVBP)}.
	Our results compare favourably with the MPEC approach \cite{Kunapuli,kunapuli2008classification,kunapuli2008bilevel}. 

\end{itemize}
This paper is organized as follows. In Section 2 
we propose   two modified DCAs and study their convergence to stationary points for a class of general DC programs.  In Section 3,  we derive  explicit conditions for the bilevel program under which the algorithms introduced in Section 3 can be applied.
Numerical experiments on the SV bilevel model selection is conducted on Section 4.  Section 5 concludes the paper.

\section{Modified inexact proximal DC algorithms}\label{sec:DC}
In order to solve the (relaxed) value function reformulation of problem DCBP, in this section we propose numerical algorithms to solve the following difference of convex program:
\begin{eqnarray*}
	({\rm DC})~~~~~~~\min_{ z\in \Sigma} && f_0(z):=g_0(z)-h_0(z)\\
	s.t. && f_1(z):=g_1(z)-h_1(z)\leq 0,
\end{eqnarray*}
where $\Sigma$ is a closed convex subset of $\mathbb{R}^d$ and $g_0(z),h_0(z),g_1(z),h_1(z):\Sigma \rightarrow \mathbb{R}$ are   convex functions.
Although the results in this section can be generalized to the case where there are more than one inequality in a straight-forwarded manner,  to simplify the notation and concentrate on the main idea we assume there is only one inequality constraint in problem (DC). Our algorithms are modifications of   the classical DCA (see \cite{le2014dc}).
Recently, \cite{pang2017} studied problem (DC) where $h_1$ is a maximum of finitely many smooth convex functions and proposed an algorithm for finding B-stationary points of it.

Before   introducing our algorithms and conduct the convergence analysis,  we recall some notations from convex analysis and variational analysis.
Let $\varphi(x):\mathbb{R}^n\rightarrow [-\infty,+\infty]$ be a convex function, and let $\bar x $ be a point where $\varphi$ be finite.  The subdifferential of $\varphi$ at $\bar x$ is a closed convex set defined by
$$\partial \varphi(\bar x):=\left \{ \xi \in \mathbb{R}^n| \ \varphi(x)\geq \varphi(\bar x)+\langle \xi, x-\bar x\rangle , \ \forall x \right \},$$
and a subgradient is an  element of the subdifferential.
For a function $\varphi:\mathbb{R}^n\times \mathbb{R}^m\rightarrow [-\infty,+\infty]$, we denote the partial subdifferential of $\varphi$ with respect to $x$ and $y$ by  $\partial_x \varphi(x, y) $ and $ \partial_y \varphi(x, y)$ respectively.
Let $\Sigma$ be a convex subset in $\mathbb{R}^n$ and $\bar x \in \Sigma$. The normal cone to $\Sigma$ at $\bar x$ is denoted by
$\mathcal{N}_\Sigma(\bar x)$.
Let $\delta_\Sigma(x)$ denote the indicator function of set $\Sigma$ at $x$.
The following partial subdifferentiation rule will be useful.
\begin{proposition}[Partial subdifferentiation]\label{partiald}
	Let $\varphi:\mathbb{R}^n\times \mathbb{R}^m\rightarrow [-\infty,+\infty]$ be a convex function and let $(\bar x,\bar y)$ be a point where $\varphi$ is finite. Then
	\begin{equation} \label{partialsubg} \partial \varphi(\bar x,\bar y) \subseteq \partial_x \varphi(\bar x,\bar y) \times \partial_y \varphi(\bar x,\bar y).\end{equation}
	The inclusion (\ref{partialsubg}) becomes an equality under one of the following conditions.
	\begin{itemize}
		\item[(a)] For every $\xi \in \partial_x \varphi (\bar x,\bar y)$, it holds that $ \varphi (x,y)-\varphi (\bar x, y)\geq \langle \xi, x-\bar x\rangle, \  \forall (x,y) \in \mathbb{R}^n\times \mathbb{R}^m .$
		\item[(b)] $\varphi(x,y) =\varphi_1(x) +\varphi_2(y)$.
		\item[(c)] For any $\varepsilon>0$, there is $\delta>0$ such that 
		\begin{eqnarray}
			\mbox{ either } 	&& \partial_x \varphi (\bar x,\bar y) \subseteq \partial_x \varphi (\bar x, y)  +\varepsilon B_{\mathbb{R}^n} \quad \forall y\in B(\bar y; \delta) \label{(7)} \\
			\mbox{ or } 	&& \partial_y \varphi (\bar x,\bar y) \subseteq \partial_y \varphi ( x, \bar y)  +\varepsilon B_{\mathbb{R}^m} \quad \forall x\in B(\bar x; \delta),\label{(8)}
		\end{eqnarray}
		where $B(\bar x; \delta)$ denotes the open ball centered at $\bar x$ with radius equal to $\delta$ and $B_{\mathbb{R}^n}$ denotes the open unit ball centered at the origin in $\mathbb{R}^n$.
		\item[(d)] $\varphi(x,y)$  is continuously differentiable  respect to one of the variables $x$ or   $y $  at    $(\bar x,\bar y)$.
	\end{itemize}
	Moreover 
	$(b)\Longrightarrow (a), (d) \Longrightarrow (c) \Longrightarrow(a).$
\end{proposition}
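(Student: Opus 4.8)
The plan is to settle the inclusion (\ref{partialsubg}) first and then reduce the reverse inclusion to a single directional-derivative inequality that I can check case by case. For (\ref{partialsubg}) I would take $(\xi,\eta)\in\partial\varphi(\bar x,\bar y)$, use the defining inequality $\varphi(x,y)\ge\varphi(\bar x,\bar y)+\langle\xi,x-\bar x\rangle+\langle\eta,y-\bar y\rangle$, and specialise it to $y=\bar y$ and to $x=\bar x$ to read off $\xi\in\partial_x\varphi(\bar x,\bar y)$ and $\eta\in\partial_y\varphi(\bar x,\bar y)$. For the reverse inclusion I would pass to support functions: writing $\varphi'((\bar x,\bar y);\cdot)$ for the directional derivative and $\varphi'_x(\bar x,\bar y;\cdot)$, $\varphi'_y(\bar x,\bar y;\cdot)$ for the directional derivatives of the two coordinate slices, equality of the sets is equivalent to
\[
\varphi'((\bar x,\bar y);(u,v))\ \ge\ \varphi'_x(\bar x,\bar y;u)+\varphi'_y(\bar x,\bar y;v)\qquad\text{for all }(u,v),
\]
the opposite inequality being equivalent to the inclusion (\ref{partialsubg}) just established.

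Under (a) the equality is quickest to obtain directly, bypassing directional derivatives. Fixing $\xi\in\partial_x\varphi(\bar x,\bar y)$ and $\eta\in\partial_y\varphi(\bar x,\bar y)$, the hypothesis gives $\varphi(x,y)-\varphi(\bar x,y)\ge\langle\xi,x-\bar x\rangle$ for every $(x,y)$, while $\eta$ being a $y$-subgradient at $(\bar x,\bar y)$ gives $\varphi(\bar x,y)\ge\varphi(\bar x,\bar y)+\langle\eta,y-\bar y\rangle$; adding the two produces the joint subgradient inequality and hence $(\xi,\eta)\in\partial\varphi(\bar x,\bar y)$. The implication $(b)\Rightarrow(a)$ is then immediate: if $\varphi(x,y)=\varphi_1(x)+\varphi_2(y)$ then $\partial_x\varphi(\bar x,\bar y)=\partial\varphi_1(\bar x)$ and, for $\xi\in\partial\varphi_1(\bar x)$, $\varphi(x,y)-\varphi(\bar x,y)=\varphi_1(x)-\varphi_1(\bar x)\ge\langle\xi,x-\bar x\rangle$.

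For $(d)\Rightarrow(c)$, assuming $\varphi$ is continuously differentiable in $x$ near $(\bar x,\bar y)$, each slice $x\mapsto\varphi(x,y)$ is differentiable at $\bar x$ for $y$ close to $\bar y$, so $\partial_x\varphi(\bar x,y)=\{\nabla_x\varphi(\bar x,y)\}$, and continuity of $\nabla_x\varphi$ makes $\|\nabla_x\varphi(\bar x,y)-\nabla_x\varphi(\bar x,\bar y)\|<\varepsilon$ on some $B(\bar y;\delta)$, which is precisely (\ref{(7)}). The step I expect to be the main obstacle is the one governed by (c): here I only have local set-convergence of the partial subdifferentials near $\bar y$, with no global information, so I would argue entirely at the level of directional derivatives, where only behaviour along rays enters. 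Splitting
\[
\frac{\varphi(\bar x+tu,\bar y+tv)-\varphi(\bar x,\bar y)}{t}=\frac{\varphi(\bar x+tu,\bar y+tv)-\varphi(\bar x,\bar y+tv)}{t}+\frac{\varphi(\bar x,\bar y+tv)-\varphi(\bar x,\bar y)}{t},
\]
the last quotient tends to $\varphi'_y(\bar x,\bar y;v)$, while convexity of $x\mapsto\varphi(x,\bar y+tv)$ bounds the first quotient below by $\varphi'_x(\bar x,\bar y+tv;u)$. Taking support functions in (\ref{(7)}) gives $\varphi'_x(\bar x,\bar y+tv;u)\ge\varphi'_x(\bar x,\bar y;u)-\varepsilon\|u\|$ for $t$ small, and letting $t\downarrow0$ and then $\varepsilon\downarrow0$ yields the displayed inequality of the first paragraph, hence equality; the case (\ref{(8)}) is handled symmetrically by splitting through $(\bar x+tu,\bar y)$.

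One point I would watch carefully is that the proposition routes the (c) and (d) cases through (a), yet (a) is a \emph{global} inequality in $(x,y)$ and appears strictly stronger than the equality of subdifferentials: for $\varphi(x,y)=\tfrac12(x-y)^2$ at the origin the two sets coincide and (\ref{(7)}) holds, but (a) already fails at $(x,y)=(1,1)$. I would therefore not attempt to deduce the literal statement (a) from (c); instead I would establish the equality directly via the directional-derivative computation above, which is all that the subsequent use of the partial-derivative formula (\ref{partialdnew1}) in the value-function analysis requires.
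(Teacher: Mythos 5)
Your proposal is correct, and on the routine parts --- the inclusion (\ref{partialsubg}), the equality under (a), and the implications $(b)\Rightarrow(a)$ and $(d)\Rightarrow(c)$ --- it coincides with the paper's proof. Where you genuinely depart from the paper is case (c): the paper derives $\langle\xi,x-\bar x\rangle\le\varphi(x,y)-\varphi(\bar x,y)+\varepsilon\|x-\bar x\|$ for all $x$ and all $y\in B(\bar y;\delta)$ and then asserts, invoking a proof technique from Clarke--Ledyaev--Stern--Wolenski, that the \emph{global} statement (a) follows; that is, it claims $(c)\Rightarrow(a)$. Your example $\varphi(x,y)=\tfrac12(x-y)^2$ at the origin shows this implication is false: there $\partial_x\varphi(0,y)=\{-y\}$, so (\ref{(7)}) holds with $\delta=\varepsilon$ (and (d) holds too), yet (a) with $\xi=0$ would require $\tfrac12(x-y)^2\ge\tfrac12y^2$ for all $(x,y)$, which fails at $(1,1)$; meanwhile $\partial\varphi(0,0)=\{(0,0)\}=\partial_x\varphi(0,0)\times\partial_y\varphi(0,0)$, so the proposition's conclusion itself is unharmed. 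Your replacement argument --- splitting the difference quotient through $(\bar x,\bar y+tv)$, bounding the first piece below by $\varphi_x'(\bar x,\bar y+tv;u)$ via convexity in $x$, using (\ref{(7)}) at the level of support functions to get $\varphi_x'(\bar x,\bar y+tv;u)\ge\sigma_{\partial_x\varphi(\bar x,\bar y)}(u)-\varepsilon\|u\|$, and sending $t\downarrow0$ then $\varepsilon\downarrow0$ --- correctly yields $\varphi'((\bar x,\bar y);(u,v))\ge\langle\xi,u\rangle+\langle\eta,v\rangle$ for every $(\xi,\eta)\in\partial_x\varphi(\bar x,\bar y)\times\partial_y\varphi(\bar x,\bar y)$, which is exactly membership in $\partial\varphi(\bar x,\bar y)$. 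So the equality under (c) and (d) stands, but the stated chain $(c)\Rightarrow(a)$ and the paper's route through (a) need to be amended essentially as you propose. Two minor cautions: for extended-valued $\varphi$, if $\varphi(\bar x,\bar y+tv)=+\infty$ then (\ref{(7)}) forces $\partial_x\varphi(\bar x,\bar y)=\emptyset$ and the reverse inclusion is vacuous, so your split is legitimate whenever it is needed; and keeping $\sigma_{\partial_x\varphi(\bar x,\bar y)}$ rather than the raw directional derivative in the intermediate bound sidesteps closure issues. Both points are immaterial for the paper's applications, where all functions involved are locally Lipschitz.
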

\begin{proof}The inclusion (\ref{partialsubg})  and its reverse under (a) follow directly from definitions of the convex subdifferential and the partial subdifferential.
	When $\varphi(x,y) =\varphi_1(x) + \varphi_2(y)$, we have that $\partial \varphi(x,y) =  \partial \varphi(x) \times \{0\} + \{0\} \times \partial \varphi(y) $. Hence obviously (b) implies (a). The implication of (d) to (c) is obvious.
	Now suppose that (\ref{(7)}) holds. Let $\xi \in \partial_x \varphi (\bar x,\bar y)$. Then according to (\ref{(7)}),  for any $\varepsilon>0$, there is $\delta>0$ such that $ \xi=\eta+\varepsilon e$, where $e \in B_{\mathbb{R}^n}$, and 
	$$\langle \xi, x-\bar x\rangle \leq \varphi (x,y)-\varphi(\bar x,y)+\varepsilon\|x-\bar x\|\quad \forall y\in B(\bar y; \delta) .$$
	Thanks to the convexity of $\varphi$, using the proof technique of \cite[Corollary 2.6 (c)]{ClarkeLSW}, we can easily show that (a) holds. The proof for the case where (\ref{(8)}) holds is similar and thus omitted. \qed
\end{proof}

Next, we first brief some solution quality characterizations for problem  (DC).
\begin{definition}\label{Defn3.1} Let $\bar z$ be a feasible solution of problem (DC). We say that $\bar z$ is a stationary/KKT point of problem (DC) if there exists a multiplier $\lambda\geq 0$ such that 
	\begin{eqnarray*} 
		&& 0\in \partial g_0(\bar z) -\partial h_0(\bar z) +\lambda( \partial g_1(\bar z) -\partial h_1(\bar z))+\mathcal{N}_\Sigma(\bar z),\\
		&& (g_1(\bar z)-h_1(\bar z))\lambda=0.
	\end{eqnarray*}
\end{definition}

\begin{definition}\label{Defn3.2} Let $\bar z$ be a feasible point of problem (DC). We say that the nonzero abnormal multiplier constraint qualification (NNAMCQ) holds at $\bar z$ for problem (DC) if either $f_1(\bar z) <0$ or $f_1(\bar z)= 0$ but 
	\begin{equation}\label{NNAMCQ}
		0 \not \in  \partial g_1(\bar{z}) -\partial h_1(\bar{z}) + \mathcal{N}_\Sigma(\bar{z}).
	\end{equation}
	Let $\bar z\in \Sigma$, we say that the extended no nonzero abnormal multiplier constraint qualification (ENNAMCQ) holds at $\bar z$ for problem (DC) if either $f_1(\bar z) <0$ or $f_1(\bar z)\geq 0$ but
	(\ref{NNAMCQ}) holds.
\end{definition}
Note that NNAMCQ (ENNAMCQ) is equivalent to  MFCQ  (EMFCQ) respectively; see e.g., \cite{Jourani}.

{ Denote by $\partial^c \varphi(x)$ the Clarke generalized gradient \cite{clarke1990optimization} of a locally Lipschitz function $\varphi$ at $x$.}   The  following optimality condition follows from the nonsmooth multiplier rule in terms of Clarke generalized gradients (see e.g. \cite{clarke1990optimization,Jourani}) and the fact that for two convex  functions $g,h$ which are Lipschitz around point $\bar z$, we have $\partial^c (g(\bar z)-h(\bar z))\subseteq \partial^c g(\bar z) -\partial^c h(\bar z)=\partial g(\bar z) -\partial h(\bar z)$.
\begin{proposition}\label{Thm3.1} Let $\bar z$ be a local solution of problem (DC). If NNAMCQ holds at $\bar z$ and all functions $g_0,g_1,h_0,h_1$ are Lipschitz around point $\bar z$, then $\bar z$ is a KKT point of problem (DC).
\end{proposition}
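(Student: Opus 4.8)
The plan is to invoke the nonsmooth Lagrange multiplier rule in Fritz John form (in terms of Clarke generalized gradients) and then use NNAMCQ to discard the degenerate multiplier, exactly as the sentence preceding the statement anticipates. First I would record that, since $g_0,h_0,g_1,h_1$ are all Lipschitz near $\bar z$, the functions $f_0=g_0-h_0$ and $f_1=g_1-h_1$ are locally Lipschitz, so the nonsmooth multiplier rule of \cite{clarke1990optimization,Jourani} applies at the local solution $\bar z$ over the closed convex set $\Sigma$. This produces multipliers $\lambda_0\ge 0$ and $\lambda\ge 0$, not both zero, such that
$$0 \in \lambda_0\,\partial^c f_0(\bar z) + \lambda\,\partial^c f_1(\bar z) + \mathcal{N}_\Sigma(\bar z), \qquad \lambda f_1(\bar z)=0,$$
where I use that for the convex set $\Sigma$ the Clarke normal cone coincides with the convex normal cone $\mathcal{N}_\Sigma(\bar z)$.

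Next I would show that NNAMCQ forces the objective multiplier $\lambda_0$ to be positive. Arguing by contradiction, suppose $\lambda_0=0$; then the nontriviality of $(\lambda_0,\lambda)$ gives $\lambda>0$, so dividing the stationarity inclusion by $\lambda$ yields $0\in\partial^c f_1(\bar z)+\mathcal{N}_\Sigma(\bar z)$, while the complementarity $\lambda f_1(\bar z)=0$ forces $f_1(\bar z)=0$. Applying the DC subgradient estimate $\partial^c f_1(\bar z)=\partial^c(g_1-h_1)(\bar z)\subseteq\partial g_1(\bar z)-\partial h_1(\bar z)$, which is legitimate because $g_1,h_1$ are convex and Lipschitz near $\bar z$, I obtain $0\in\partial g_1(\bar z)-\partial h_1(\bar z)+\mathcal{N}_\Sigma(\bar z)$ together with $f_1(\bar z)=0$. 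This is precisely the situation excluded by NNAMCQ, giving the contradiction; hence $\lambda_0>0$.

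Finally I would normalize $\lambda_0=1$ and substitute the two DC estimates $\partial^c f_0(\bar z)\subseteq\partial g_0(\bar z)-\partial h_0(\bar z)$ and $\partial^c f_1(\bar z)\subseteq\partial g_1(\bar z)-\partial h_1(\bar z)$ into the stationarity inclusion, which converts it into
$$0 \in \partial g_0(\bar z)-\partial h_0(\bar z)+\lambda\bigl(\partial g_1(\bar z)-\partial h_1(\bar z)\bigr)+\mathcal{N}_\Sigma(\bar z),$$
while the complementarity $\lambda f_1(\bar z)=0$ is exactly $(g_1(\bar z)-h_1(\bar z))\lambda=0$. Together these are the two relations of Definition \ref{Defn3.1}, so $\bar z$ is a KKT point.

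The only delicate point — and hence the main obstacle — is the correct application of the Fritz John multiplier rule in the nonsmooth, convex-set-constrained setting and the identification of the Clarke normal cone of $\Sigma$ with the convex normal cone $\mathcal{N}_\Sigma(\bar z)$; once these are in place, the argument is a short chain of inclusions resting on the DC calculus $\partial^c(g-h)\subseteq\partial^c g-\partial^c h=\partial g-\partial h$ already recorded in the excerpt, so no genuinely new estimate is required.
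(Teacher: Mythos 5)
Your argument is correct and follows essentially the same route the paper takes: the paper disposes of this proposition in one sentence by citing the nonsmooth multiplier rule in Clarke form together with the inclusion $\partial^c(g-h)(\bar z)\subseteq \partial^c g(\bar z)-\partial^c h(\bar z)=\partial g(\bar z)-\partial h(\bar z)$, and your write-up simply makes explicit the Fritz John step and the elimination of the abnormal multiplier via NNAMCQ (which is exactly what the name ``no nonzero abnormal multiplier'' encodes). No gap; your version is just a more detailed rendering of the paper's one-line justification.
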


\subsection{Inexact proximal DCA with simplified penalty parameter update}

In this subsection we  propose an algorithm called inexact proximal DCA to solve  problem (DC) and show its convergence to stationary points.

By using the main idea of DCA which linearizes the concave part of the DC structure, we propose a sequential convex programming scheme as follows.
Given a current iterate $z^k\in \Sigma$ with  $k=0,1,\ldots$, we select a subdifferential $\xi^k_i\in \partial h_i(z^k)$, for $i=0,1$. 
Then we solve the following subproblem approximately and select $z^{k+1}$ as an approximate minimizer:
\begin{equation} \label{subp} 
	\begin{aligned}
	\min_{z \in \Sigma} ~ \tilde{\varphi}_k(z) :=\; & g_0(z) - h_0(z^k) -\langle \xi_0^k, {z-z^k}\rangle \\
	&+\beta_k \max\{g_1(z) -h_1(z^k)- \langle\xi_1^k, z-z^k\rangle, 0\} +\frac{\rho}{2} \|z-z^k\|^2,
	\end{aligned}
\end{equation}
where $\rho$ is a given positive constant and $\beta_k $ represents the adaptive penalty parameter.
Our scheme is similar to that of DCA2 in \cite{le2014dc} but   different in that  the subproblem (\ref{subp}) has a strongly convex objective function,  the subproblem is only solved approximately,  and  a simplier penalty parameter update is used. 
We propose the following two inexact conditions for choosing $z^{k+1}$ as an approximate solution to (\ref{subp}):
\begin{equation}\label{inexact1}
	\mathrm{dist}(0, \partial \tilde{\varphi}_k(z^{k+1}) + \mathcal{N}_\Sigma(z^{k+1})) \le \zeta_{k}, \quad \mbox{for some  } \zeta_k\geq 0  \mbox{ satisfying } \sum_{k = 0}^{\infty} \zeta_{k}^2 < \infty,
\end{equation}
and 
\begin{equation}\label{inexact2}
	\mathrm{dist}(0, \partial \tilde{\varphi}_k(z^{k+1}) + \mathcal{N}_\Sigma(z^{k+1})) \le \frac{\sqrt{2}}{2} \rho\|z^k - z^{k-1}\|,
\end{equation}
where $\mathrm{dist}(x, M)$ denotes the distance from a point $x$ to set $M$.

Using  above constructions, we are ready to propose the inexact proximal DCA (iP-DCA) in Algorithm \ref{ipDCA}. 
\begin{algorithm}[h]
	\caption{iP-DCA}\label{ipDCA}
	\begin{algorithmic}[1]
		\State Take an initial point $z^0\in \Sigma$; $\delta_\beta > 0$; an initial penalty parameter $\beta_0>0$, $tol>0$.
		\For{$k=0,1,\ldots$}{
			\begin{itemize}
				\item[1.] Compute $\xi^k_i\in \partial h_i(z^k)$, $i=0,1$.
				\item[2.] Obtain an inexact solution $z^{k+1}$ of (\ref{subp}) satisfying  \eqref{inexact1} or \eqref{inexact2}.
				\item[3.] Stopping test. Compute {$t^{k+1}:= \max\{g_1(z^{k+1}) -h_1(z^k)- \langle\xi_1^k, z^{k+1} - z^k\rangle, 0\}$}. Stop if $\max \{ \|z^{k+1}-z^k\|, t^{k+1}\} <tol$.
				\item[4.] Penalty parameter update.
				Set
				\begin{equation*}
					\beta_{k+1} = \left\{
					\begin{aligned}
						&\beta_k + \delta_\beta, \qquad &&\text{if}~\max\{\beta_k, 1/t^{k+1}\} < \|z^{k+1}-z^k\|^{-1}, \\
						&\beta_k, \qquad &&\text{otherwise}.
					\end{aligned}\right.
				\end{equation*}
				\item[5.] Set $k:=k+1$.
		\end{itemize}}
		\EndFor
	\end{algorithmic}
\end{algorithm}

In DCA2 of \cite{le2014dc}, the subproblem (\ref{subp})  was solved as a constrained optimization problem and  a Lagrange multiplier is used to update the penalty parameter. Since our penalty parameter update rule does not involve any multipliers, it is easier to implement. 
In the rest of this section we show that the proposed algorithm converges. Let us start with the following lemma which provides a sufficient decrease of the merit function of (DC) defined by   
$$\varphi_k(z):=g_0(z)-h_0(z)+\beta_k \max\{g_1(z)-h_1(z),0\}.$$
\begin{lemma}\label{suff_decreasenew}
	Let $\{z^k\}$ be a sequence of iterates generated by  iP-DCA as defined in Algorithm~{\rm\ref{ipDCA}}. If the inexact criterion \eqref{inexact1}  or \eqref{inexact2} is applied, then
	$z^k$ satisfies 
	{
		\begin{eqnarray*}
			\varphi_k(z^k) &\ge &  \varphi_k(z^{k+1}) + \frac{\rho}{2} \|z^{k+1} - z^k\|^2 - \frac{1}{2\rho}\zeta_{k}^2, \label{suff_decrease_eq1} \\ 
			\mbox{or} \qquad \varphi_k(z^k) & \geq &  \varphi_k(z^{k+1}) + \frac{\rho}{2} \|z^{k+1} - z^k\|^2 - \frac{\rho}{4}\|z^k - z^{k-1}\|^2, \label{suff_decrease_eq2}
		\end{eqnarray*} where $\zeta_k\geq 0  \mbox{ satisfying } \sum_{k = 0}^{\infty} \zeta_{k}^2 < \infty$ respectively. }
\end{lemma}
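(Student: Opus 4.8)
The plan is to exploit two structural facts about the subproblem: that its objective $\tilde\varphi_k$ is a majorant of the merit function $\varphi_k$ which is \emph{exact} at the anchor point $z^k$, and that $\tilde\varphi_k$ is $\rho$-strongly convex on $\Sigma$ because of the proximal term.

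First I would record the majorization. Since $h_0$ and $h_1$ are convex and $\xi_i^k \in \partial h_i(z^k)$, the subgradient inequalities $h_i(z) \ge h_i(z^k) + \langle \xi_i^k, z - z^k\rangle$ hold for $i = 0,1$. Using the $i=0$ inequality on the $-h_0$ term, and the $i=1$ inequality inside $\max\{\cdot,0\}$ (which is nondecreasing, while $\beta_k > 0$), I obtain for every $z \in \Sigma$ the relation $\varphi_k(z) + \tfrac{\rho}{2}\|z - z^k\|^2 \le \tilde\varphi_k(z)$. Equality holds at $z = z^k$, since there the linearization errors vanish and the proximal term is zero, so $\tilde\varphi_k(z^k) = \varphi_k(z^k)$.

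Next I would invoke the inexactness condition. Either \eqref{inexact1} or \eqref{inexact2} furnishes a vector $w^{k+1} \in \partial \tilde\varphi_k(z^{k+1}) + \mathcal{N}_\Sigma(z^{k+1}) = \partial(\tilde\varphi_k + \delta_\Sigma)(z^{k+1})$ (the equality by the sum rule, Proposition \ref{sum}) with $\|w^{k+1}\|$ bounded by the respective right-hand side. Since $\tilde\varphi_k + \delta_\Sigma$ is $\rho$-strongly convex, the strong-convexity subgradient inequality applied between $z^{k+1}$ and $z^k \in \Sigma$ gives $\tilde\varphi_k(z^k) \ge \tilde\varphi_k(z^{k+1}) + \langle w^{k+1}, z^k - z^{k+1}\rangle + \tfrac{\rho}{2}\|z^k - z^{k+1}\|^2$. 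I then substitute $\tilde\varphi_k(z^k) = \varphi_k(z^k)$ on the left and, on the right, use the majorization at $z^{k+1}$ to replace $\tilde\varphi_k(z^{k+1})$ by $\varphi_k(z^{k+1}) + \tfrac{\rho}{2}\|z^{k+1} - z^k\|^2$, reaching $\varphi_k(z^k) \ge \varphi_k(z^{k+1}) + \rho\|z^{k+1} - z^k\|^2 + \langle w^{k+1}, z^k - z^{k+1}\rangle$.

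Finally I would control the cross term by Young's inequality, $\langle w^{k+1}, z^k - z^{k+1}\rangle \ge -\tfrac{1}{2\rho}\|w^{k+1}\|^2 - \tfrac{\rho}{2}\|z^k - z^{k+1}\|^2$, which consumes one $\tfrac{\rho}{2}\|z^{k+1}-z^k\|^2$ and leaves $\varphi_k(z^k) \ge \varphi_k(z^{k+1}) + \tfrac{\rho}{2}\|z^{k+1} - z^k\|^2 - \tfrac{1}{2\rho}\|w^{k+1}\|^2$. Inserting $\|w^{k+1}\| \le \zeta_k$ from \eqref{inexact1} yields the first claimed estimate, while inserting $\|w^{k+1}\| \le \tfrac{\sqrt{2}}{2}\rho\|z^k - z^{k-1}\|$ from \eqref{inexact2}, so that $\tfrac{1}{2\rho}\|w^{k+1}\|^2 \le \tfrac{\rho}{4}\|z^k - z^{k-1}\|^2$, yields the second. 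I expect the only delicate points to be the first step---checking that the $\max\{\cdot,0\}$ penalty is correctly majorized (monotonicity together with $\beta_k > 0$) and that equality is attained precisely at $z^k$---and confirming via Proposition \ref{sum} that $\partial \tilde\varphi_k(z^{k+1}) + \mathcal{N}_\Sigma(z^{k+1})$ really is the subdifferential of the strongly convex sum, so the strong-convexity inequality is legitimate; both are routine under the stated convexity hypotheses.
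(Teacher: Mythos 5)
Your proposal is correct and follows essentially the same route as the paper's proof: both use the subgradient inequalities for $h_0,h_1$ to show $\tilde\varphi_k$ majorizes $\varphi_k(\cdot)+\tfrac{\rho}{2}\|\cdot-z^k\|^2$ with equality at $z^k$, apply the $\rho$-strong-convexity subgradient inequality of $\tilde\varphi_k+\delta_\Sigma$ between $z^{k+1}$ and $z^k$, and absorb the error term via Young's inequality before inserting the two inexactness bounds. The only difference is bookkeeping of the $\tfrac{\rho}{2}$ terms (the paper cancels them inside the strong-convexity step, you carry them through and let Young's inequality consume one), which yields the identical estimate.
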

\begin{proof}
	Since $z^{k+1}$ is an approximation solution to problem \eqref{subp} with inexact criterion \eqref{inexact1} or \eqref{inexact2}, there exists a vector $e_k$ such that $e_k \in \partial \tilde{\varphi}_k(z^{k+1}) + \mathcal{N}_\Sigma(z^{k+1})\subseteq \partial (\tilde{\varphi}_k+\delta_\Sigma )(z^{k+1})$ and 
	\begin{equation}\label{15-16}
		\|e_k\| \leq \zeta_k \mbox{ or } \quad \|e_k\| \leq \frac{\sqrt{2}}{2} \rho\|z^k - z^{k-1}\|
		,\end{equation} respectively.   As $\tilde{\varphi}_k$ is strongly convex with modulus $\rho$, $\Sigma$ is a closed convex set and $z^k\in \Sigma$, we have
	\begin{equation}\label{suff_decrease_proof_eq1}
		\begin{aligned}
			\tilde{\varphi}_k(z^{k}) &\ge \tilde{\varphi}_k(z^{k+1}) + \langle e_k, z^{k+1} - z^k \rangle + \frac{\rho}{2}\|z^{k+1} - z^k\|^2 \\
			& \ge \tilde{\varphi}_k(z^{k+1}) - \frac{1}{2\rho}\|e_k\|^2 - \frac{\rho}{2}\|z^{k+1} - z^k\|^2 + \frac{\rho}{2}\|z^{k+1} - z^k\|^2 \\
			& = \tilde{\varphi}_k(z^{k+1}) - \frac{1}{2\rho}\|e_k\|^2.
		\end{aligned}
	\end{equation}
	Next, by the convexity of $h_i(z)$ and $\xi^k_i\in \partial h_i(z^k)$, $i=0,1$, there holds that
	\[
	h_i(z^{k+1}) \ge h_i(z^k)+ \langle \xi^k_i, z^{k+1} - z^k \rangle, \quad i = 0,1,
	\]
	and thus
	$
	\tilde{\varphi}_k(z^{k+1}) \ge \varphi_k(z^{k+1}) + \frac{\rho}{2} \|z^{k+1} - z^k\|^2.
	$
	Combined with \eqref{suff_decrease_proof_eq1}, we have
	\[
	\varphi_k(z^k) = \tilde{\varphi}_k(z^{k}) \ge \tilde{\varphi}_k(z^{k+1})  - \frac{1}{2\rho}\|e_k\|^2 \ge \varphi_k(z^{k+1})  - \frac{1}{2\rho}\|e_k\|^2 + \frac{\rho}{2} \|z^{k+1} - z^k\|^2.
	\]
	The conclusion follows immediately from (\ref{15-16}). 
	\qed
\end{proof}

The following theorem is the main result of this section. It proves that any accumulation point of  iP-DCA is a KKT point as long as the penalty parameter sequence $\{\beta_k\}$ is bounded.

\begin{theorem}\label{thm1}
	Suppose $f_0$ is bounded below on $\Sigma$ and the sequences $\{z^k\}$ and $\{\beta_k\}$ generated by   iP-DCA are bounded. Moreover suppose functions $g_0$, $g_1$, $h_1$, $h_0$ are locally Lipschitz on set $\Sigma$. Then  every accumulation point of $\{z^k\}$ is a KKT point of problem (DC).
\end{theorem}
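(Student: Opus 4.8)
The plan is to exploit the boundedness of $\{\beta_k\}$ together with the sufficient decrease in Lemma~\ref{suff_decreasenew} to drive the successive differences to zero, and then to pass to the limit in the approximate optimality condition of subproblem~\eqref{subp}. First I would observe that, since the update rule in Step~4 only ever increases $\beta_k$ by the fixed amount $\delta_\beta$, boundedness of $\{\beta_k\}$ forces only finitely many updates; hence there exist an index $K$ and a constant $\beta>0$ with $\beta_k=\beta$ for all $k\ge K$. For such $k$ the merit function $\varphi_k$ is the single fixed function $\varphi(z)=f_0(z)+\beta\max\{f_1(z),0\}$, which is bounded below because $f_0$ is bounded below on $\Sigma$ and the penalty term is nonnegative.

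Next I would telescope the sufficient-decrease estimate from Lemma~\ref{suff_decreasenew}. Under criterion~\eqref{inexact1}, summing $\varphi(z^k)\ge\varphi(z^{k+1})+\tfrac{\rho}{2}\|z^{k+1}-z^k\|^2-\tfrac{1}{2\rho}\zeta_k^2$ over $k\ge K$ and using $\sum_k\zeta_k^2<\infty$ together with the lower bound on $\varphi$ yields $\sum_k\|z^{k+1}-z^k\|^2<\infty$; under criterion~\eqref{inexact2} the same conclusion follows after a one-step reindexing of the $\tfrac{\rho}{4}\|z^k-z^{k-1}\|^2$ terms, which leaves a summable tail with coefficient $\tfrac{\rho}{4}$. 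In either case $\|z^{k+1}-z^k\|\to0$, and moreover the error vectors $e_k\in\partial\tilde\varphi_k(z^{k+1})+\mathcal N_\Sigma(z^{k+1})$ from the proof of the lemma satisfy $\|e_k\|\to0$ (either $\zeta_k\to0$ or $\tfrac{\sqrt2}{2}\rho\|z^k-z^{k-1}\|\to0$). To obtain feasibility of accumulation points I would use that for $k\ge K$ the update condition in Step~4 fails; since $\|z^{k+1}-z^k\|\to0$ eventually forces $\beta<\|z^{k+1}-z^k\|^{-1}$, the failure must come from the other alternative, giving $t^{k+1}\le\|z^{k+1}-z^k\|\to0$. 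Passing to a convergent subsequence $z^{k_j}\to z^*$ (whence also $z^{k_j+1}\to z^*$) and using continuity of $g_1,h_1$ and boundedness of $\xi_1^{k_j}$ then gives $\max\{g_1(z^*)-h_1(z^*),0\}=0$, i.e.\ feasibility.

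For stationarity I would write the approximate optimality condition for~\eqref{subp} explicitly: there are $\eta_0^{k+1}\in\partial g_0(z^{k+1})$, $\eta_1^{k+1}\in\partial g_1(z^{k+1})$, a multiplier $\lambda^{k+1}\in[0,\beta]$ (equal to $\beta$ when $t^{k+1}>0$ and arising from the subdifferential of the max-term otherwise), and $\nu^{k+1}\in\mathcal N_\Sigma(z^{k+1})$ such that $e_k=\eta_0^{k+1}-\xi_0^k+\lambda^{k+1}(\eta_1^{k+1}-\xi_1^k)+\rho(z^{k+1}-z^k)+\nu^{k+1}$. Along the subsequence, local Lipschitzness of $g_0,g_1,h_0,h_1$ makes all subgradients locally bounded, and $\lambda^{k_j+1}$ lies in the compact interval $[0,\beta]$; solving the displayed identity for $\nu^{k_j+1}$ shows it too is bounded. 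Extracting further convergent subsequences and invoking the outer semicontinuity (closed graph) of the convex subdifferentials and of $\mathcal N_\Sigma$ for the closed convex set $\Sigma$, I would pass to the limit to obtain $\lambda^*\ge0$, $\eta_0^*\in\partial g_0(z^*)$, $\eta_1^*\in\partial g_1(z^*)$, $\xi_i^*\in\partial h_i(z^*)$, $\nu^*\in\mathcal N_\Sigma(z^*)$ with $0=\eta_0^*-\xi_0^*+\lambda^*(\eta_1^*-\xi_1^*)+\nu^*$, which is exactly the stationarity inclusion of Definition~\ref{Defn3.1}. Complementarity $(g_1(z^*)-h_1(z^*))\lambda^*=0$ follows by cases: if $g_1(z^*)-h_1(z^*)<0$ then the argument of the max-term is eventually strictly negative along the subsequence, forcing $\lambda^{k_j+1}=0$ and hence $\lambda^*=0$; otherwise feasibility gives $g_1(z^*)-h_1(z^*)=0$.

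The main obstacle I anticipate is the limiting argument in the final paragraph: correctly identifying the subdifferential of the penalty term $\beta\max\{\cdot,0\}$ at $z^{k+1}$, tracking the associated multiplier $\lambda^{k+1}\in[0,\beta]$ through the three regimes (active, inactive, and boundary), and securing boundedness of the normal-cone component $\nu^{k+1}$ so that the closed-graph properties can be applied to yield membership of the limits in the correct subdifferentials and normal cone.
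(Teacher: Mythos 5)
Your proposal is correct and follows essentially the same route as the paper's proof: eventual constancy of $\beta_k$, telescoping the sufficient-decrease estimate of Lemma~\ref{suff_decreasenew} to get $\|z^{k+1}-z^k\|\to 0$ and hence $t^{k+1}\le\|z^{k+1}-z^k\|\to 0$ from the update rule, then passing to the limit in the subproblem's approximate optimality condition with the multiplier $\lambda^{k+1}=\beta_k\tilde\lambda^{k+1}\in[0,\beta]$ coming from the subdifferential of the max-term. Your treatment of feasibility, complementarity by cases, and the boundedness of the normal-cone component is somewhat more explicit than the paper's, but it is the same argument.
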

\begin{proof}
	Since $\{\beta_k\}$ is bounded, there exists some iteration index $k_0$ such that
	$
	\beta_k = \beta_{k_0}, \quad \forall k \ge k_0,
	$
	and thus $\varphi_k(z) = \varphi_{k_0}(z)$ for all $k \ge k_0$.
	As $f_0$ is bounded below, $\varphi_k(z)$ is bounded below for all $k \ge k_0$. 
	Then, by the inequality \eqref{suff_decrease_eq1} and \eqref{suff_decrease_eq2} obtained in Lemma \ref{suff_decreasenew}, we have
	$$
	\sum_{k = 1}^{\infty} \|z^{k+1}-z^{k}\|^2 < + \infty, \qquad \lim_{k \rightarrow \infty} \|z^{k+1}-z^{k}\| = 0, 
	$$
	and thus $\beta_k < \|z^{k+1}-z^k\|^{-1}$ always holds when $k$ is large enough.
	According to the update strategy of $\beta_k$ in  iP-DCA, there exists some iteration index $k_1$ such that 
	\[
	t^{k+1} := \max\{g_1(z^{k+1}) -h_1(z^k)- \langle\xi_1^k, z^{k+1} - z^k\rangle, 0\} \le \|z^{k+1}-z^k\| \qquad \forall k \ge k_1,
	\]
	and thus $t^{k} \rightarrow 0$.
	Since $z^{k+1}$ is an approximate solution to problem \eqref{subp} and inexact criterion \eqref{inexact1} or \eqref{inexact2} holds, there exists a vector $e_k$ such that $e_k \in \partial \tilde{\varphi}_k(z^{k+1}) + \mathcal{N}_\Sigma(z^{k+1})$ and (\ref{15-16}) holds. According to the sum rule (see, e.g., \cite[Theorem 23.8]{rockafellar}\cite[Corollary 1 to Theorem 2.9.8]{clarke1990optimization}) and the subdifferential calculus rules for the pointwise maximum (see, e.g., \cite[Proposition 2.3.12]{clarke1990optimization}), there exist $\tilde{\lambda}^{k+1} \in [0,1]$ and $\eta_i^{k+1} \in \partial g_i(z^{k+1}) (i=0,1)$ such that 
	\begin{eqnarray} 
		&& e_k \in \eta_0^{k+1}  - \xi_0^k + \beta_k \tilde{\lambda}^{k+1} (\eta_1^{k+1}  -\xi_1^k) +\rho(z^{k+1}-z^k) +{\cal N}_\Sigma(z^{k+1}) ,\label{optimalityc}\\
		&& g_1(z^{k+1}) -h_1(z^k)- \langle\xi_1^k, z^{k+1}-z^k\rangle \leq t^{k+1}, \label{eqn10} \\
		&& \tilde{\lambda}^{k+1}(g_1(z^{k+1}) -h_1(z^k)- \langle\xi_1^k, z^{k+1}-z^k\rangle- t^{k+1})=0, \label{eqn11} 
		\\
		&& t^{k+1}( 1 -\tilde{\lambda}^{k+1}) = 0 , \quad t^{k+1}\geq 0. 
		\label{eqn9}
	\end{eqnarray}
	Since $\{\beta_k\tilde{\lambda}^{k+1}\}$ is bounded, we may suppose that  $\tilde{z}$ and $\tilde{\lambda}$ are  accumulation points of $\{z^k\}$ and $\{\beta_k\tilde{\lambda}^{k+1}\}$ respectively. Taking subsequences if necessary, without loss of generality we may assume that $z^k \rightarrow \tilde z\in \Sigma$ and $\beta_k\tilde{\lambda}^{k+1} \rightarrow \tilde \lambda$.
	Now passing onto the limit as $k  \rightarrow \infty$ in \eqref{optimalityc}-\eqref{eqn11}, as $e_k \rightarrow 0$ from $\zeta_{k} \rightarrow 0$ in \eqref{inexact1} or $\|z^{k+1}-z^{k}\| \rightarrow 0$ in \eqref{inexact2} and $t^k \rightarrow 0$, since $g_i(z)$, $h_i(z)$, $i = 0,1$ are locally Lipschitz continuous at $\tilde{z}$,  $\partial g_i(z)$, $\partial h_i(z)$, $i = 0,1$ and $\mathcal{N}_\Sigma(z)$ are outer semicontinuous, we obtain that
	$\tilde{z}$ is a KKT solution of problem (DC). \qed
\end{proof}

Notice that the boundedness of the penalty parameters is needed for an accumulation
point to be a KKT point. The following proposition provides a sufficient
condition for the boundedness of the penalty parameters sequence $\{\beta_k\}$.

\begin{proposition}\label{Prop1_bounded_beta}
	Suppose that the iterate sequence $\{z^k\}$ generated by iP-DCA is bounded. Moreover suppose functions $g_0,g_1,h_1,h_0$ are Lipschitz around at any accumulation point of   $\{z^k\}$. Assume that ENNAMCQ holds at any accumulation points of the  sequence  $\{z^k\}$. Then the sequence $\{\beta_k\}$ must be bounded.
\end{proposition}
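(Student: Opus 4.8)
The plan is to argue by contradiction. Suppose $\{\beta_k\}$ is unbounded. Since the penalty parameter is nondecreasing and is raised by the fixed amount $\delta_\beta$ whenever it is raised at all, there must be an infinite index set $K$ along which the increment is triggered; that is, for $k\in K$ we have $\max\{\beta_k,1/t^{k+1}\}<\|z^{k+1}-z^k\|^{-1}$, which splits into the two inequalities $\|z^{k+1}-z^k\|<1/\beta_k$ and $t^{k+1}>\|z^{k+1}-z^k\|>0$. The first forces $\|z^{k+1}-z^k\|\to 0$ along $K$ (since $\beta_k\to\infty$), while the second guarantees $t^{k+1}>0$, so by the complementarity relation \eqref{eqn9} we must have $\tilde{\lambda}^{k+1}=1$ for every $k\in K$.

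Next I would extract a convergent subsequence. Since $\{z^k\}$ is bounded, passing to a subsequence of $K$ (not relabelled) we may assume $z^k\to\tilde z\in\Sigma$; because $\|z^{k+1}-z^k\|\to 0$ this also yields $z^{k+1}\to\tilde z$. The functions $g_0,g_1,h_0,h_1$ being Lipschitz near $\tilde z$, their subdifferentials are locally bounded there, so the sequences $\eta_i^{k+1}\in\partial g_i(z^{k+1})$ and $\xi_i^k\in\partial h_i(z^k)$ ($i=0,1$) are bounded; refining once more we may assume $\eta_1^{k+1}\to\tilde\eta_1\in\partial g_1(\tilde z)$ and $\xi_1^k\to\tilde\xi_1\in\partial h_1(\tilde z)$ by outer semicontinuity of the subdifferential mappings.

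The heart of the argument is to divide the optimality inclusion \eqref{optimalityc} by $\beta_k$. With $\tilde{\lambda}^{k+1}=1$, \eqref{optimalityc} provides $n_k\in\mathcal N_\Sigma(z^{k+1})$ with
\[
e_k=\eta_0^{k+1}-\xi_0^k+\beta_k(\eta_1^{k+1}-\xi_1^k)+\rho(z^{k+1}-z^k)+n_k.
\]
Dividing by $\beta_k$ and using that $\mathcal N_\Sigma(z^{k+1})$ is a cone (so $n_k/\beta_k\in\mathcal N_\Sigma(z^{k+1})$), I isolate $n_k/\beta_k$. As $k\to\infty$ along $K$, the terms $e_k/\beta_k$, $(\eta_0^{k+1}-\xi_0^k)/\beta_k$ and $\rho(z^{k+1}-z^k)/\beta_k$ all vanish: indeed $\|e_k\|$ is bounded by $\zeta_k$ or by $\tfrac{\sqrt2}{2}\rho\|z^k-z^{k-1}\|$ from \eqref{15-16} and hence stays bounded, while the numerators of the remaining fractions are bounded and $\beta_k\to\infty$. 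Therefore $n_k/\beta_k\to-(\tilde\eta_1-\tilde\xi_1)$, and outer semicontinuity of $\mathcal N_\Sigma(\cdot)$ together with $z^{k+1}\to\tilde z$ gives $-(\tilde\eta_1-\tilde\xi_1)\in\mathcal N_\Sigma(\tilde z)$. Consequently
\[
0\in\partial g_1(\tilde z)-\partial h_1(\tilde z)+\mathcal N_\Sigma(\tilde z),
\]
so \eqref{NNAMCQ} fails at $\tilde z$.

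It remains to rule out the feasibility alternative of ENNAMCQ. Passing to the limit in $t^{k+1}=g_1(z^{k+1})-h_1(z^k)-\langle\xi_1^k,z^{k+1}-z^k\rangle>0$ (valid for $k\in K$ since $t^{k+1}>0$) and using the continuity of $g_1,h_1$ and $\|z^{k+1}-z^k\|\to 0$, I obtain $f_1(\tilde z)=g_1(\tilde z)-h_1(\tilde z)\ge 0$. Thus neither branch of ENNAMCQ (Definition \ref{Defn3.2}) can hold at $\tilde z$, contradicting the hypothesis, and hence $\{\beta_k\}$ must be bounded. The step I expect to require the most care is the passage to the limit in the scaled inclusion: one must verify boundedness of the subgradient sequences from local Lipschitzness, exploit the conic structure of the normal cone, and invoke outer semicontinuity correctly so that the limiting normal vector indeed lies in $\mathcal N_\Sigma(\tilde z)$.
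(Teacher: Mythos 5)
Your proof is correct and follows essentially the same route as the paper's: contradiction via the infinitely many triggered penalty updates, forcing $\tilde\lambda^{k+1}=1$ so that dividing the optimality inclusion by $\beta_k$ (the paper divides by $\lambda^{k_j+1}=\beta_{k_j}\tilde\lambda^{k_j+1}$, which is the same quantity) and passing to the limit violates \eqref{NNAMCQ}, while $t^{k+1}>0$ yields $f_1(\tilde z)\ge 0$ in the limit. Your write-up is somewhat more explicit about the conic structure of $\mathcal N_\Sigma$ and the boundedness/outer semicontinuity of the subdifferentials, but the argument is the same.
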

\begin{proof}
	The proof is inspired by \cite[Theorem 3.1]{le2014dc}. 
	To the contrary, suppose that $\beta_k \rightarrow + \infty$ as $k \rightarrow \infty$. Then there exist infinitely many indices $j$ such that
	\[
	\beta_{k_j}< \|z^{k_j+1}- z^{k_j}\|^{-1}  \mbox{ and } \quad t^{k_j+1} > \|z^{k_j+1}- z^{k_j}\|,
	\]
	and thus 
	$$
	\lim_{j \rightarrow \infty} \|z^{k_j+1}-z^{k_j}\| = 0, \qquad t^{k_j+1} > 0, \quad \forall j.
	$$
	From \eqref{eqn9}, since $t^{k_j+1} > 0$ for all $j$, we have $\tilde{\lambda}^{k_j+1} = 1$ for all $j$ and thus $\lambda^{k_j+1}:= \beta_{k_j}\tilde{\lambda}^{k_j+1} \rightarrow +\infty$ as $j \rightarrow \infty$.
	Taking a further subsequence, if necessary, we can assume that $z^{k_j} \rightarrow \tilde{z}\in \Sigma$ as $j \rightarrow \infty$.
	If $g_1 (\tilde{z})-h_1 (\tilde{z}) < 0$, then as $g_1,h_1$ are continuous at $\tilde{z}$, $\{\xi^{k_j}\}$ is bounded, and $\lim_{j \rightarrow \infty} \|z^{k_j+1}-z^{k_j}\| = 0$, when $j$ is sufficiently large, one has
	$
	g_1(z^{k_j+1}) -h_1(z^{k_j})- \langle \xi^{k_j}_1, z^{k_j+1} - z^{k_j} \rangle  < 0,
	$
	which contradicts to $t^{k_j+1}:= \max\{g_1(z^{k_j+1}) -h_1(z^{k_j})- \langle \xi^{k_j}_1, z^{k_j+1} - z^{k_j}\rangle,0\} > 0$ for all $j$.
	Thus, $g_1 (\tilde{z})-h_1 (\tilde{z})  \ge 0$. 
	From \eqref{optimalityc}, we have 
	\[
	\begin{aligned}
			e_{k_j} \in &\partial g_0(z^{k_j+1})- \partial h_0(z^{k_j})+ \lambda^{k_j+1} \partial g_1(z^{k_j+1}) -\lambda^{k_j+1} \partial h_1(z^{k_j})\\&  + \rho(z^{k_j+1} - z^{k_j}  ) + {N}_\Sigma(z^{k_j+1}), 
	\end{aligned}
	\]
	where $\lambda^{k+1}:= \beta_k\tilde{\lambda}^{k+1}$.
	Dividing both sides of this equality by $ \lambda^{k_j+1}$, and passing onto the limit as $j \rightarrow \infty$, 
	we have
	$
	0 \in \partial g_1(\tilde{z})-\partial h_1(\tilde{z}) + \mathcal{N}_\Sigma(\tilde{z}),
	$
	which contradicts ENNAMCQ. \qed
\end{proof}

\subsection{Inexact proximal linearized DCA with simplified penalty parameter update}

Recall that  iP-DCA defined in Algorithm~{\rm\ref{ipDCA}} requires minimization of a strongly convex subproblem (\ref{subp}).
In this subsection, we assume that $g_1$ is $L$-smooth which means that $\nabla g_1(z)$ is Lipschitz continuous with constant $L$ on $\Sigma$. 
This setting motivates a very simple linearization approach inspired by the idea behind the proximal gradient method (see \cite{FOMbook} and the references therein). Specifically, we 
linearize both the concave part and the convex smooth part of the DC structure. Such a linearization approach makes subproblems easier to solve compared to iP-DCA.
Given a current iterate $z^k\in \Sigma$ with  $k=0,1,\ldots$, we select {a subgradient} $\xi^k_i\in \partial h_i(z^k)$, for $i=0,1$. 
Then we solve the following subproblem approximately.
\begin{equation} \label{subplin} 
	\begin{aligned}
		&\underset{z \in \Sigma}{\text{min}} ~~    \hat{\varphi}_k(z) := \,\, g_0(z) - h_0(z^k) -\langle \xi_0^k, z-{z^k}\rangle  +\frac{\rho_k}{2} \|z-z^k\|^2 \\
		& \qquad \qquad \quad +\beta_k \max\{g_1(z^k) +\langle \nabla g_1(z^k), z-z^k\rangle -h_1(z^k)- \langle\xi_1^k, z-z^k\rangle, 0\},
	\end{aligned}
\end{equation}
where $\rho_k$ and $\beta_k$ are the adaptive proximal and penalty parameters respectively.
Choose $z^{k+1}$ as an {\em approximate minimizer} of the convex subproblem (\ref{subplin}) satisfying one of the following two inexact criteria
\begin{equation}\label{inexact1_lin}
	\mathrm{dist}(0, \partial \hat{\varphi}_k(z^{k+1}) + \mathcal{N}_\Sigma(z^{k+1})) \le \zeta_{k}, \qquad \mbox{ for some } \zeta_k \mbox{ satisfying } \sum_{k = 0}^{\infty} \zeta_{k}^2 < \infty,
\end{equation}
and
\begin{equation}\label{inexact2_lin}
	\mathrm{dist}(0, \partial \hat{\varphi}_k(z^{k+1}) + \mathcal{N}_\Sigma(z^{k+1})) \le \frac{\sqrt{2}}{2}\sigma\|z^k - z^{k-1}\|.
\end{equation}
This yields the inexact proximal linearized DCA (iPL-DCA),
whose exact description is given in Algorithm \ref{iplDCA}.

\begin{algorithm}[h]
	\caption{iPL-DCA}\label{iplDCA}
	\begin{algorithmic}[1]
		\State Take an initial point $z^0\in \Sigma$; $\delta_\beta > 0$, $\sigma >0$, an initial penalty parameter $\beta_0>0$, an initial regularizer parameter $\rho_0 = \frac{1}{2}\beta_0L+\sigma$, $tol>0$.
		\For{$k=0,1,\ldots$}{
			\begin{itemize}
				\item[1.] Compute $\xi^k_i\in \partial h_i(z^k)$, $i=0,1$.
				\item[2.] Obtain an inexact solution $z^{k+1}$ of (\ref{subplin}) satisfying  (\ref{inexact1_lin}) or (\ref{inexact2_lin}).
				\item[3.] Stopping test. Compute $t^{k+1} := \max\{g_1(z^k) +\langle \nabla g_1(z^k), z^{k+1}-z^k\rangle -h_1(z^k)- \langle\xi_1^k, z^{k+1} - z^k\rangle, 0\}$. Stop if $\max \{ \|z^{k+1}-z^k\|, t^{k+1}\} <tol$.
				\item[4.] Penalty parameter update.
				Set
				\begin{eqnarray*}
					\beta_{k+1} &=& \left\{
					\begin{aligned}
						&\beta_k + \delta_\beta, \qquad &&\text{if}~ \max\{\beta_k, 1/t^{k+1}\} < \|z^{k+1}-z^k\|^{-1}, \\
						&\beta_k, \qquad &&\text{otherwies}.
					\end{aligned}\right.\\
					\rho_{k+1}&=& \frac{1}{2}\beta_{k+1} L +\sigma.
				\end{eqnarray*}
				\item[5.] Set $k:=k+1$.
		\end{itemize}}
		\EndFor
	\end{algorithmic}
\end{algorithm}

Recall that the merit function of (DC) is defined by   
$\varphi_k(z):=g_0(z)-h_0(z)+\beta_k \max\{g_1(z)-h_1(z),0\}.$
Similar to Lemma \ref{suff_decreasenew}, we first give following sufficiently decrease result of  iPL-DCA.
\begin{lemma}\label{lin_suff_decrease}
	Let $\{z^k\}$ be the sequence of iterates generated by  iPL-DCA  as defined in Algorithm~{\rm\ref{iplDCA}}. If the inexact criterion \eqref{inexact1_lin} or \eqref{inexact2_lin} is applied, then 
	$z^k$ satisfies 
	\begin{eqnarray*}
		\varphi_k(z^k)  & \ge & \varphi_k(z^{k+1}) + \frac{\sigma}{2} \|z^{k+1} - z^k\|^2 - \frac{1}{2\sigma}\zeta_{k}^2, \label{lin_suff_decrease_eq1} \\
		\varphi_k(z^k) & \ge &    \varphi_k(z^{k+1}) + \frac{\sigma}{2} \|z^{k+1} - z^k\|^2 - \frac{\sigma}{4}\|z^k - z^{k-1}\|^2, \label{lin_suff_decrease_eq2}
	\end{eqnarray*} respectively.
\end{lemma}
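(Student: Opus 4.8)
The plan is to mirror the proof of Lemma~\ref{suff_decreasenew}, adding one new ingredient to absorb the error created by linearizing the convex part $g_1$. As there, the inexact criterion \eqref{inexact1_lin} or \eqref{inexact2_lin} furnishes a vector $e_k \in \partial \hat{\varphi}_k(z^{k+1}) + \mathcal{N}_\Sigma(z^{k+1}) \subseteq \partial(\hat{\varphi}_k + \delta_\Sigma)(z^{k+1})$ with $\|e_k\| \le \zeta_k$ or $\|e_k\| \le \frac{\sqrt{2}}{2}\sigma\|z^k - z^{k-1}\|$ respectively. Since the proximal term endows $\hat{\varphi}_k + \delta_\Sigma$ with strong convexity of modulus $\rho_k$, I would first apply the strong-convexity subgradient inequality between $z^{k+1}$ and $z^k$ (both in $\Sigma$, so the indicator terms vanish), and then Young's inequality on $\langle e_k, z^k - z^{k+1}\rangle$ with a free parameter $\alpha>0$, obtaining $\hat{\varphi}_k(z^k) \ge \hat{\varphi}_k(z^{k+1}) - \frac{1}{2\alpha}\|e_k\|^2 + \left(\frac{\rho_k}{2} - \frac{\alpha}{2}\right)\|z^{k+1}-z^k\|^2$.

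Next I would bound $\hat{\varphi}_k(z^{k+1})$ below by $\varphi_k(z^{k+1})$ up to a quadratic error. The concave parts are handled exactly as before: $\xi_i^k \in \partial h_i(z^k)$ and convexity of $h_0,h_1$ give $-h_i(z^k) - \langle\xi_i^k, z^{k+1}-z^k\rangle \ge -h_i(z^{k+1})$. The genuinely new step concerns the linear surrogate $g_1(z^k) + \langle\nabla g_1(z^k), z^{k+1}-z^k\rangle$ inside the max: here I invoke the descent lemma for the $L$-smooth function $g_1$, namely $g_1(z^k) + \langle\nabla g_1(z^k), z^{k+1}-z^k\rangle \ge g_1(z^{k+1}) - \frac{L}{2}\|z^{k+1}-z^k\|^2$. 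Combining with the $h_1$ bound, the argument of the max is at least $f_1(z^{k+1}) - \frac{L}{2}\|z^{k+1}-z^k\|^2$; since $t \mapsto \max\{t,0\}$ is nondecreasing and $1$-Lipschitz, the elementary estimate $\max\{a-b,0\} \ge \max\{a,0\} - b$ for $b\ge 0$ then yields $\beta_k \max\{\cdot,0\} \ge \beta_k\max\{f_1(z^{k+1}),0\} - \frac{\beta_k L}{2}\|z^{k+1}-z^k\|^2$. Keeping the explicit proximal term $\frac{\rho_k}{2}\|z^{k+1}-z^k\|^2$ in $\hat{\varphi}_k(z^{k+1})$, this gives $\hat{\varphi}_k(z^{k+1}) \ge \varphi_k(z^{k+1}) + \left(\frac{\rho_k}{2} - \frac{\beta_k L}{2}\right)\|z^{k+1}-z^k\|^2$. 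Together with the identity $\hat{\varphi}_k(z^k) = \varphi_k(z^k)$ (all linear and proximal terms vanish and the surrogate agrees with $g_1$ at $z^k$), the three estimates chain into $\varphi_k(z^k) \ge \varphi_k(z^{k+1}) + \left(\rho_k - \frac{\beta_k L}{2} - \frac{\alpha}{2}\right)\|z^{k+1}-z^k\|^2 - \frac{1}{2\alpha}\|e_k\|^2$.

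The crux, and the step I expect to require the most care, is calibrating $\alpha$ so that the quadratic coefficient collapses to exactly $\frac{\sigma}{2}$ while the residual $\frac{1}{2\alpha}\|e_k\|^2$ matches the stated error terms. Choosing $\alpha = \sigma$ and substituting the algorithm's rule $\rho_k = \frac{1}{2}\beta_k L + \sigma$ gives $\rho_k - \frac{\beta_k L}{2} - \frac{\sigma}{2} = \frac{\sigma}{2}$, so the $\beta_k L$ contributions cancel precisely; this cancellation is exactly what the definition of $\rho_k$ is engineered to produce, and it is the reason that linearizing $g_1$ costs nothing in the decrease estimate. Finally, feeding the two inexactness bounds into $\frac{1}{2\sigma}\|e_k\|^2$ yields $\frac{1}{2\sigma}\zeta_k^2$ under \eqref{inexact1_lin} and $\frac{1}{2\sigma}\cdot\frac{1}{2}\sigma^2\|z^k-z^{k-1}\|^2 = \frac{\sigma}{4}\|z^k-z^{k-1}\|^2$ under \eqref{inexact2_lin}, giving the two claimed inequalities respectively. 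Beyond this bookkeeping, the only delicate point is the correct combination of the descent lemma with the $1$-Lipschitz max estimate, which is where the smoothness of $g_1$ is genuinely used.
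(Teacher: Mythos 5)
Your proposal is correct and follows essentially the same route as the paper's proof: the strong-convexity inequality for $\hat{\varphi}_k$ combined with Young's inequality on the error term, the convexity bounds for $h_0,h_1$, the descent lemma for the $L$-smooth $g_1$ passed through the monotone $1$-Lipschitz $\max\{\cdot,0\}$, and the cancellation produced by the choice $\rho_k=\tfrac12\beta_kL+\sigma$. The only cosmetic difference is that you carry a free Young parameter $\alpha$ before setting $\alpha=\sigma$ and you spell out the $\max\{a-b,0\}\ge\max\{a,0\}-b$ step, which the paper leaves implicit.
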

\begin{proof}
	Since $z^{k+1}$ is an approximation solution to problem \eqref{subplin} with inexact criterion \eqref{inexact1_lin} or \eqref{inexact2_lin}, there exists a vector $e_k$ such that $e_k \in \partial \hat{\varphi}_k(z^{k+1}) + \mathcal{N}_\Sigma(z^{k+1}))\subseteq \partial (\tilde{\varphi}_k+\delta_\Sigma )(z^{k+1})$ and  \begin{equation}\label{15-16new}
		\|e_k\| \leq \zeta_k \mbox{ or } \quad \|e_k\| \leq \frac{\sqrt{2}}{2} \sigma \|z^k - z^{k-1}\|
		,\end{equation} respectively.  As $\hat{\varphi}_k$ is strongly convex with modulus $\rho_k$ and $\Sigma$ is a closed convex set, we have
	\begin{equation}\label{lin_suff_decrease_proof_eq1}
		\begin{aligned}
			\hat{\varphi}_k(z^{k}) &\ge \hat{\varphi}_k(z^{k+1}) + \langle e_k, z^{k+1} - z^k \rangle + \frac{\rho_k}{2}\|z^{k+1} - z^k\|^2 \\
			& \ge \hat{\varphi}_k(z^{k+1}) - \frac{1}{2\sigma}\|e_k\|^2 - \frac{\sigma}{2}\|z^{k+1} - z^k\|^2 + \frac{\rho_k}{2}\|z^{k+1} - z^k\|^2 \\
			& = \hat{\varphi}_k(z^{k+1}) - \frac{1}{2\sigma}\|e_k\|^2 + \frac{\rho_k - \sigma}{2}\|z^{k+1} - z^k\|^2.
		\end{aligned}
	\end{equation}
	Next, by the convexity of $h_i(z)$ and $\xi^k_i\in \partial h_i(z^k)$, $i=0,1$, we have
	\[
	h_i(z^{k+1}) \ge h_i(z^k)+ \langle \xi^k_i, z^{k+1} - z^k \rangle, \quad i = 0,1.
	\]
	And since $g_1$ is $L$-smooth, we have
	\[
	g_1(z^{k+1}) \le g_1(z^k) +\langle \nabla g_1(z^k), z-z^k\rangle + \frac{L}{2}\|z^{k+1} - z^k\|^2.
	\]
	Thus, we have
	\[
	\hat{\varphi}_k(z^{k+1}) \ge \varphi_k(z^{k+1}) + \frac{\rho_k - \beta_kL}{2} \|z^{k+1} - z^k\|^2.
	\]
	Combined with \eqref{lin_suff_decrease_proof_eq1}, we have
	\[
	\begin{aligned}
		\varphi_k(z^k) = \hat{\varphi}_k(z^{k}) &\ge \hat{\varphi}_k(z^{k+1})  - \frac{1}{2\sigma}\|e_k\|^2 + \frac{\rho_k - \sigma}{2}\|z^{k+1} - z^k\|^2 \\
		&\ge \varphi_k(z^{k+1})  - \frac{1}{2\sigma}\|e_k\|^2 + \frac{2\rho_k - \beta_kL - \sigma}{2} \|z^{k+1} - z^k\|^2 \\
		& =\varphi_k(z^{k+1})  - \frac{1}{2\sigma}\|e_k\|^2 + \frac{\sigma}{2} \|z^{k+1} - z^k\|^2.
	\end{aligned}
	\]
	Then the conclusion follows immediately from (\ref{15-16new}).
	\qed
\end{proof}

Similar to Theorem \ref{thm1} and Proposition \ref{Prop1_bounded_beta}, by Lemma \ref{lin_suff_decrease}, the following  convergence results of  iPL-DCA can be derived easily.
The proofs are purely technical and thus omitted.

\begin{theorem}\label{convergence_thm}
	Suppose $f_0$ is bounded below and the sequences $\{z^k\}$ and $\{\beta_k\}$ generated by iPL-DCA are bounded,  functions $g_0,h_1,h_0$ are  locally Lipschitz on set $\Sigma$. Then every accumulation point of $\{z^k\}$ is a KKT point for problem (DC).
\end{theorem}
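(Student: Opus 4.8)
The plan is to follow the template of the proof of Theorem~\ref{thm1}, substituting the sufficient-decrease estimate of Lemma~\ref{lin_suff_decrease} for that of Lemma~\ref{suff_decreasenew} and the optimality condition of the linearized subproblem \eqref{subplin} for that of \eqref{subp}. First I would invoke the boundedness of $\{\beta_k\}$ to fix an index $k_0$ with $\beta_k=\beta_{k_0}$ for all $k\ge k_0$; then the merit function is eventually constant, $\varphi_k=\varphi_{k_0}$ for $k\ge k_0$, and it is bounded below because $f_0$ is. Telescoping either inequality of Lemma~\ref{lin_suff_decrease} over $k\ge k_0$ yields $\sum_k\|z^{k+1}-z^k\|^2<\infty$, hence $\|z^{k+1}-z^k\|\to0$. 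For the variant controlled by $\tfrac{\sigma}{4}\|z^k-z^{k-1}\|^2$ this requires the usual re-indexing, so that the negative term of step $k$ cancels part of the positive term of step $k-1$ and only a summable boundary remains.

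With $\|z^{k+1}-z^k\|\to0$ in hand, the penalty update rule forces $\beta_k<\|z^{k+1}-z^k\|^{-1}$ for all large $k$, so there is an index $k_1$ beyond which $t^{k+1}\le\|z^{k+1}-z^k\|$, whence $t^k\to0$. I would then record the first-order condition for the approximate minimizer $z^{k+1}$ of \eqref{subplin}: using the inexactness criterion \eqref{inexact1_lin} or \eqref{inexact2_lin}, the sum rule (Proposition~\ref{sum}) and the subdifferential of a pointwise maximum, there exist $e_k$ with $\|e_k\|\le\zeta_k$ or $\|e_k\|\le\tfrac{\sqrt2}{2}\sigma\|z^k-z^{k-1}\|$, a multiplier $\tilde\lambda^{k+1}\in[0,1]$ and $\eta_0^{k+1}\in\partial g_0(z^{k+1})$ with
\begin{equation*}
  e_k\in\eta_0^{k+1}-\xi_0^k+\beta_k\tilde\lambda^{k+1}\bigl(\nabla g_1(z^k)-\xi_1^k\bigr)+\rho_k(z^{k+1}-z^k)+\mathcal{N}_\Sigma(z^{k+1}),
\end{equation*}
together with complementarity relations for $t^{k+1}$ of the same form as \eqref{eqn11} and \eqref{eqn9}, but with the linearized quantity $g_1(z^k)+\langle\nabla g_1(z^k),z^{k+1}-z^k\rangle-h_1(z^k)-\langle\xi_1^k,z^{k+1}-z^k\rangle$ as the constraint argument. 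The structural difference from Theorem~\ref{thm1} is that this argument is affine in $z$, so the gradient $\nabla g_1(z^k)$ appears in place of a subgradient $\eta_1^{k+1}\in\partial g_1(z^{k+1})$; moreover $\rho_k=\tfrac12\beta_kL+\sigma$ is bounded since $\{\beta_k\}$ is, so $\rho_k(z^{k+1}-z^k)\to0$.

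Finally I would pass to the limit along a subsequence with $z^k\to\tilde z\in\Sigma$ and $\beta_k\tilde\lambda^{k+1}\to\tilde\lambda$; both accumulation points exist because $\{z^k\}$ is bounded by hypothesis and $\{\beta_k\tilde\lambda^{k+1}\}$ is bounded by $\tilde\lambda^{k+1}\in[0,1]$ and the boundedness of $\{\beta_k\}$. Since $g_0,h_0,h_1$ are locally Lipschitz at $\tilde z$, the maps $\partial g_0,\partial h_0,\partial h_1$ and $\mathcal{N}_\Sigma$ are outer semicontinuous, so $\eta_0^{k+1}$ accumulates in $\partial g_0(\tilde z)$ and $\xi_i^k$ in $\partial h_i(\tilde z)$, while $L$-smoothness gives $\nabla g_1(z^k)\to\nabla g_1(\tilde z)=\partial g_1(\tilde z)$. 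Using $e_k\to0$ and $\rho_k(z^{k+1}-z^k)\to0$, the displayed inclusion passes to $0\in\partial g_0(\tilde z)-\partial h_0(\tilde z)+\tilde\lambda(\partial g_1(\tilde z)-\partial h_1(\tilde z))+\mathcal{N}_\Sigma(\tilde z)$; feasibility $g_1(\tilde z)-h_1(\tilde z)\le0$ follows from $t^{k+1}\to0$, and multiplying the complementarity relation by $\beta_k$ and letting $k\to\infty$ yields $\tilde\lambda(g_1(\tilde z)-h_1(\tilde z))=0$, which is exactly the KKT condition of Definition~\ref{Defn3.1}. The main obstacle is precisely this limiting step for the linearized constraint: one must show that evaluating the gradient at $z^k$ rather than $z^{k+1}$ is harmless, which rests on continuity of $\nabla g_1$ combined with $\|z^{k+1}-z^k\|\to0$, and one must keep the proximal coefficient $\rho_k$ bounded through the boundedness of $\{\beta_k\}$.
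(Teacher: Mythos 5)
Your proposal is correct and follows exactly the route the paper intends: the paper omits the proof of Theorem~\ref{convergence_thm}, stating only that it is "similar to Theorem~\ref{thm1}" via Lemma~\ref{lin_suff_decrease}, and your argument is a faithful instantiation of that template. You also correctly identify the only substantive adjustments — the affine constraint argument puts $\nabla g_1(z^k)$ in place of a subgradient at $z^{k+1}$ (handled by continuity of $\nabla g_1$ and $\|z^{k+1}-z^k\|\to 0$), and $\rho_k=\tfrac12\beta_k L+\sigma$ stays bounded because $\{\beta_k\}$ does.
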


\begin{proposition}\label{lin_Prop_bounded_beta}
	Suppose the sequence $\{z^k\}$ generated by  iPL-DCA is bounded, functions $g_0,h_1,h_0$ are Lipschitz around at any accumulation point of   $\{z^k\}$,  and ENNAMCQ holds at any accumulation points of the sequence $\{z^k\}$. Then the sequence $\{\beta_k\}$ is bounded.
\end{proposition}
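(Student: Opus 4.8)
The plan is to run the same contradiction argument used for iP-DCA in Proposition~\ref{Prop1_bounded_beta}, adapting it to the linearized subproblem~\eqref{subplin} and, crucially, to the fact that here the proximal parameter $\rho_k=\tfrac12\beta_kL+\sigma$ grows together with $\beta_k$. Suppose to the contrary that $\beta_k\to+\infty$. By the penalty update rule in iPL-DCA there are infinitely many indices $j$ with $\beta_{k_j}<\|z^{k_j+1}-z^{k_j}\|^{-1}$ and $t^{k_j+1}>\|z^{k_j+1}-z^{k_j}\|$, so that $\|z^{k_j+1}-z^{k_j}\|\to0$ and $t^{k_j+1}>0$ for all $j$. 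The complementarity relation (the analogue of~\eqref{eqn9} for~\eqref{subplin}) forces $\tilde\lambda^{k_j+1}=1$, hence $\lambda^{k_j+1}:=\beta_{k_j}\tilde\lambda^{k_j+1}=\beta_{k_j}\to+\infty$. Using boundedness of $\{z^k\}$ I pass to a further subsequence with $z^{k_j}\to\tilde z\in\Sigma$, and since $\|z^{k_j+1}-z^{k_j}\|\to0$ also $z^{k_j+1}\to\tilde z$.

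Next I would establish $g_1(\tilde z)-h_1(\tilde z)\ge0$. Because $g_1$ is $L$-smooth (so $\nabla g_1$ is continuous and locally bounded) and $h_1$ is continuous near $\tilde z$, and $\{\xi_1^{k_j}\}$ is bounded by the Lipschitz property of $h_1$ (Proposition~\ref{continuous}), the two linearization terms $\langle\nabla g_1(z^{k_j}),z^{k_j+1}-z^{k_j}\rangle$ and $\langle\xi_1^{k_j},z^{k_j+1}-z^{k_j}\rangle$ both vanish in the limit. The argument of the maximum defining $t^{k_j+1}$ therefore converges to $g_1(\tilde z)-h_1(\tilde z)$; if this limit were negative, then $t^{k_j+1}=0$ for large $j$, contradicting $t^{k_j+1}>0$, which yields the claimed sign.

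I would then write the first-order optimality condition at the inexact minimizer of~\eqref{subplin}, the exact analogue of~\eqref{optimalityc}: there exist $e_k$ obeying~\eqref{inexact1_lin} or~\eqref{inexact2_lin}, $\eta_0^{k+1}\in\partial g_0(z^{k+1})$ and $\tilde\lambda^{k+1}\in[0,1]$ with
\begin{equation*}
e_k\in\eta_0^{k+1}-\xi_0^k+\beta_k\tilde\lambda^{k+1}\bigl(\nabla g_1(z^k)-\xi_1^k\bigr)+\rho_k(z^{k+1}-z^k)+\mathcal{N}_\Sigma(z^{k+1}).
\end{equation*}
Restricting to the subsequence and dividing by $\lambda^{k_j+1}=\beta_{k_j}$, the terms $(\eta_0^{k_j+1}-\xi_0^{k_j})/\beta_{k_j}$ and $e_{k_j}/\beta_{k_j}$ tend to $0$, since the subgradients of $g_0,h_0$ stay bounded near $\tilde z$ while $\|e_{k_j}\|$ is bounded and $\beta_{k_j}\to\infty$. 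The surviving factor $\nabla g_1(z^{k_j})-\xi_1^{k_j}$ converges, along a subsequence, to $\nabla g_1(\tilde z)-\xi_1$ with $\xi_1\in\partial h_1(\tilde z)$, by continuity of $\nabla g_1$ and outer semicontinuity of $\partial h_1$; and the normal-cone term is unaffected by the division because $\mathcal{N}_\Sigma$ is a cone, its outer semicontinuity at $\tilde z$ keeping the limit inside $\mathcal{N}_\Sigma(\tilde z)$. Passing to the limit gives $0\in\nabla g_1(\tilde z)-\partial h_1(\tilde z)+\mathcal{N}_\Sigma(\tilde z)\subseteq\partial g_1(\tilde z)-\partial h_1(\tilde z)+\mathcal{N}_\Sigma(\tilde z)$, which together with $g_1(\tilde z)-h_1(\tilde z)\ge0$ contradicts ENNAMCQ at $\tilde z$.

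The step I expect to be the main obstacle — and the genuine point of departure from the iP-DCA proof — is the handling of the proximal contribution $\rho_k(z^{k+1}-z^k)$ after dividing by $\lambda^{k_j+1}$. In Proposition~\ref{Prop1_bounded_beta} the constant $\rho$ disappears trivially because $\rho/\lambda^{k_j+1}\to0$, but here $\rho_k=\tfrac12\beta_kL+\sigma$ grows \emph{linearly} in $\beta_k$, so one must check that the ratio $\rho_{k_j}/\beta_{k_j}=\tfrac12L+\sigma/\beta_{k_j}$ remains bounded (with limit $\tfrac12L$). Boundedness of this ratio combined with $\|z^{k_j+1}-z^{k_j}\|\to0$ is precisely what makes $\rho_{k_j}(z^{k_j+1}-z^{k_j})/\beta_{k_j}\to0$, so the proximal term still vanishes in the limit; verifying this interplay between the two coupled growing parameters is the crux, after which the argument closes exactly as in the iP-DCA case.
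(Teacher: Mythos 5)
Your proposal is correct and follows exactly the route the paper intends: the paper omits this proof as ``purely technical,'' deferring to the contradiction argument of Proposition~\ref{Prop1_bounded_beta}, and your adaptation to the linearized subproblem~\eqref{subplin} is precisely that argument. You also correctly isolate and resolve the only genuinely new point, namely that $\rho_{k}=\tfrac12\beta_{k}L+\sigma$ grows with $\beta_{k}$ but the ratio $\rho_{k_j}/\beta_{k_j}\to L/2$ stays bounded, so the proximal term still vanishes after dividing by $\lambda^{k_j+1}$ because $\|z^{k_j+1}-z^{k_j}\|\to0$.
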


\begin{remark}
	In fact, if $g_0$ is further assumed to be differentiable and $\nabla g_0$ is Lipschitz continuous, we can also linearize  $g_0$ in  iPL-DCA. The proof of convergence is similar.
\end{remark}

\section{DC algorithms for solving DCBP}

In this section
we will show how to solve problem (DCBP) numerically. 
It is obvious that  problem $({\rm VP})_\epsilon$ is problem (DC) with 
$$z:=(x,y), \ f_0(x,y):=F_1(x,y)-F_2(x,y), \  f_1(x,y):= f(x,y)-v(x)-\epsilon, \ \Sigma =C.$$
According to \cite[Theorem 10.4]{rockafellar}, since $F_1(x,y), F_2(x,y), f(x,y) $ are Lipschitz continuous near every point on an open convex set containing $C$ and hence  Lipschitz continuous near every point on $C$.  However our problem $({\rm VP})_\epsilon$ involves the value function which is an extended-value function $v(x): X \rightarrow [-\infty, \infty]$ defined by
$$v(x) := \inf_{y \in Y} \left\{ f(x,y) ~ s.t.~ g(x,y) \le 0\right\},$$
with the convention of $v(x)=+\infty$ if the feasible region ${\cal F}(x)$ is empty.
To apply the proposed DC algorithms, we need to answer the following questions.
\begin{itemize}
	\item[(a)] Is the value function  convex and  locally Lipschitz  on the convex set $X$ and how to obtain one element from $\partial v(x^k)$ in terms of problem data?
	\item[(b)] Will the constraint qualification ENNAMCQ hold at any accumulation point of the iterate sequence?
\end{itemize}
We now give answers to  these questions in the next two subsections.

\subsection{Lipschitz continuity and  the subdifferential of the value function} 

Thanks to the full convex structure of the lower level problem in (DCBP), the value function turns out to be convex and  Lipschitz continuous under our problem setting as shown below.
\begin{lemma}\label{Prop1}
	The value function $v(x):X \rightarrow \mathbb{R}$ is convex and  Lipschitz continuous around any point   in  set $X$.  Given $\bar{x}\in X$ and $\bar{y}\in {S}(\bar{x})$, we have
	\begin{equation}
		\partial v(\bar{x}) = \{ \xi \in \mathbb{R}^n : (\xi,0) \in \partial \phi(\bar{x}, \bar{y}) \},\label{gvaluefunction}
	\end{equation}
	where $\phi(x,y) :=  f(x,y) + \delta_D(x,y)$, $D:=\{(x,y)\in {\cal O} \times Y \mid g(x,y)\leq 0\},$ 
	with ${\cal O}$ being the open set defined in the introduction.
\end{lemma}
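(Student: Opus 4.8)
The plan is to establish convexity, Lipschitz continuity, and the subdifferential formula for the value function $v$ by working with the auxiliary function $\phi(x,y) = f(x,y) + \delta_D(x,y)$ and its partial minimization over $y$. First I would observe that $v(x) = \inf_{y} \phi(x,y)$, since minimizing $\phi$ over $y$ enforces the constraints $(x,y) \in D$ (i.e. $y \in Y$ and $g(x,y) \le 0$) and otherwise returns the objective $f(x,y)$. Because $f$ is convex on an open convex set containing $C$ and $D$ is a convex set (being the intersection of the convex set ${\cal O} \times Y$ with the sublevel sets $\{g_i \le 0\}$ of the convex functions $g_i$), the function $\phi$ is jointly convex in $(x,y)$. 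The convexity of $v$ then follows from the standard fact that the partial infimum of a jointly convex function over one block of variables is convex in the remaining variable; see \cite[Theorem 5.3]{rockafellar} or the corresponding marginal-function result.

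Next I would address Lipschitz continuity. By Proposition \ref{continuous}, it suffices to show that $v$ is finite on a neighborhood of every point of $X$, i.e. that $X \subseteq {\rm int}({\rm dom}\, v)$. The assumption in the introduction guarantees that for all $x$ in the open convex set ${\cal O} \supseteq X$ the feasible region ${\cal F}(x)$ is nonempty and $f(x,\cdot)$ is bounded below on it, so $v(x) < +\infty$ on ${\cal O}$ and $v(x) > -\infty$ as well. Hence ${\cal O} \subseteq {\rm dom}\, v$ with ${\cal O}$ open, which places every point of $X$ in the interior of the domain; Proposition \ref{continuous} then yields local Lipschitz continuity of the convex function $v$ around each point of $X$.

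For the subdifferential formula I would use the characterization of the subgradient of a marginal function. Fix $\bar x \in X$ and $\bar y \in S(\bar x)$, so that $v(\bar x) = \phi(\bar x, \bar y)$ and $\bar y$ attains the infimum. I would prove the two inclusions defining the set $\{\xi : (\xi, 0) \in \partial\phi(\bar x, \bar y)\}$. For the inclusion showing this set is contained in $\partial v(\bar x)$: if $(\xi, 0) \in \partial\phi(\bar x, \bar y)$, then for all $(x,y)$ we have $\phi(x,y) \ge \phi(\bar x, \bar y) + \langle \xi, x - \bar x\rangle + \langle 0, y - \bar y\rangle$; taking the infimum over $y$ on the left gives $v(x) \ge v(\bar x) + \langle \xi, x - \bar x\rangle$, so $\xi \in \partial v(\bar x)$. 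For the reverse inclusion, if $\xi \in \partial v(\bar x)$, then using $\phi(x,y) \ge v(x) \ge v(\bar x) + \langle \xi, x - \bar x\rangle = \phi(\bar x, \bar y) + \langle \xi, x - \bar x\rangle$ and noting the right-hand side does not depend on $y$, I obtain $\phi(x,y) \ge \phi(\bar x, \bar y) + \langle (\xi, 0), (x,y) - (\bar x, \bar y)\rangle$ for all $(x,y)$, which is exactly $(\xi, 0) \in \partial\phi(\bar x, \bar y)$.

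The main obstacle I anticipate is not the subdifferential equivalence, which falls out cleanly from the two-sided argument above, but rather being careful about the extended-valued nature of $v$ and the role of the open set ${\cal O}$. Specifically, one must ensure $v$ is genuinely real-valued and locally bounded on a neighborhood of $X$ so that Proposition \ref{continuous} applies; this is where the introduction's standing assumptions (nonempty ${\cal F}(x)$ and boundedness below of $f(x,\cdot)$ on all of ${\cal O}$, not merely on $X$) are essential. I would take care to justify that $\phi$ is proper and that the partial infimum is attained at $\bar y \in S(\bar x)$, which guarantees $v(\bar x) = \phi(\bar x, \bar y)$ is finite and legitimizes the subgradient comparison. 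The convexity and properness of $\phi$, together with attainment of the minimum in $y$, are precisely what make the marginal-function subdifferential formula exact rather than merely an inclusion.
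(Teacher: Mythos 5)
Your proposal is correct and follows essentially the same route as the paper: extend $v$ to $\inf_y \phi(x,y)$ with $\phi$ jointly convex, get convexity by partial minimization, get local Lipschitz continuity from $X \subseteq {\rm int}({\rm dom}\,v)$ via Proposition \ref{continuous}, and characterize $\partial v(\bar x)$ through $\partial\phi(\bar x,\bar y)$. The only difference is that where the paper cites \cite[Theorems 1 and 24]{rockafellar1974conjugate} for the convexity and the subdifferential formula, you supply the (correct) elementary two-sided argument directly.
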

\begin{proof}
	First we extend the definition of the value function from any element $x\in X$ to the whole space $ \mathbb{R}^n$ as follows:
	$$v(x) := \inf_{y \in \mathbb{R}^m} \phi(x,y), \qquad \forall x\in \mathbb{R}^n.$$
	It follows that $v(x)=+\infty$ for $x\not \in {\cal O}$.
	In our problem setting, $f$ is fully convex on an open convex set containing  the convex set	$C$ and hence we can assume without loss of generality that $f$ is fully convex on  the convex set $D$. Therefore the extended-valued function $\phi(x,y): \mathbb{R}^n\times \mathbb{R}^m\rightarrow [-\infty, \infty]$ is  convex. 
                                                                                                                                                       	The convexity of the value function $v(x) = \inf_{y \in \mathbb{R}^m} \phi(x,y)$ then follows from \cite[Theorem 1]{rockafellar1974conjugate}.  Hence the value function restricted on set $X$ is convex.  Next, according to \cite[Theorem 24]{rockafellar1974conjugate}, {we have the equation (\ref{gvaluefunction}).}  By assumption stated in the introduction of the paper,  the feasible region of the lower level program ${\cal F}(x):=\{y\in Y: g(x,y)\leq 0\}\not =\emptyset$  and $v(x)\not =-\infty$ for all  $x$ in  the open set ${\cal O}$. 
	Hence $v(x): \mathbb{R}^n \rightarrow [-\infty,\infty]$ is proper convex.
	Since   dom$v:=\{x: v(x)<+\infty\}= \{x: {\cal F}(x)\not =\emptyset\}\supseteq {\cal O}\supseteq X$,  we have $X \subseteq $  int(dom$v$). The result on  Lipschitz continuity of the value function follows from \cite[Theorem 10.4]{rockafellar}. \qed
\end{proof}
By using some sensitivity analysis techniques,  {a subgradient} of the value function $v(x)$ can be expressed in terms of Lagrangian multipliers. 
In particular, given $\bar{y}\in {S}(\bar{x})$, we denote the set of KKT multipliers of the lower-level problem $(P_{\bar x})$  by 
\[
\begin{aligned}
&KT(\bar{x},\bar{y})\\
 := &\left \{ \gamma \in \mathbb{R}^l_+ \Big|  0 \in \partial_y f(\bar{x}, \bar{y}) + \sum_{i = 1}^{l} \gamma_i \partial_y g_i(\bar{x}, \bar{y}) + \mathcal{N}_{Y}(\bar{y}), \quad \sum_{i = 1}^{l}\gamma_i g_i(\bar{x},\bar{y}) = 0   \right \}.
\end{aligned}
\]


\begin{theorem}\label{Thm4.1}
	Let $\bar{x}\in X$ and $\bar{y} \in {S}(\bar{x})$. Then
	\begin{eqnarray}\label{inclusionvaluef}
		\lefteqn{\partial v(\bar x)  \supseteq} \nonumber \\
			&& \Bigg \{\xi \Big| (\xi,0)\in   \partial f(\bar x,\bar y)+ \sum_{i = 1}^{l} \gamma_i \partial g_i(\bar{x}, \bar{y}) + \{0\} \times \mathcal{N}_{ Y}(\bar{y}),\nonumber \\
			&& \hspace*{140pt} \gamma \in \mathbb{R}^l, \gamma \ge 0, \sum_{i = 1}^{l}\gamma_i g_i(\bar{x},\bar{y}) = 0  \Bigg \},
	\end{eqnarray} 	
	and the equality holds in (\ref{inclusionvaluef}) provided that 
	\begin{equation} \mathcal{N}_E(\bar{x}, \bar{y}) = \Big\{ \sum_{i = 1}^{l} \gamma_i \partial g_i(\bar{x}, \bar{y}) + \{0\} \times \mathcal{N}_{ Y}(\bar{y}) \mid \gamma\ge 0, \sum_{i = 1}^{l}\gamma_i g_i(\bar{x},\bar{y}) = 0 \Big\}, \label{normalconeeq}\end{equation}
	where $E:=\{(x,y)\in \mathbb{R}^n \times Y: g(x,y)\leq 0\}$. 
	
	Moreover if  the partial derivative formula holds
	\begin{eqnarray}
		\partial f(\bar x,\bar y)= \partial_x f(\bar x,\bar y)\times \partial_y f(\bar x,\bar y) ,&& 
		\partial g_i(\bar x,\bar y)= \partial_x g_i(\bar x,\bar y)\times \partial_y g_i(\bar x,\bar y) \label{partialdnew}
	\end{eqnarray}
	then 
	\begin{equation}\label{valuefinc}
		\bigcup_{ \gamma \in KT(\bar{x},\bar{y})} \left ( \partial_x f(\bar{x}, \bar{y}) + \sum_{i = 1}^{l} \gamma_i \partial_x g_i(\bar{x}, \bar{y})  \right )\subseteq \partial v(\bar{x}),
	\end{equation}
	and the equality in  (\ref{valuefinc}) holds provided that 
	(\ref{normalconeeq}) holds.
\end{theorem}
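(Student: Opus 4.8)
The plan is to start from Lemma~\ref{Prop1}, which already expresses $\partial v(\bar{x})$ as $\{\xi : (\xi,0)\in\partial\phi(\bar{x},\bar{y})\}$ with $\phi = f + \delta_D$, so that the entire theorem reduces to computing $\partial\phi(\bar{x},\bar{y})$. First I would apply the sum rule in Proposition~\ref{sum} to $\phi = f + \delta_D$. Since $f$ is convex and finite on an open convex set containing $C$ and $(\bar{x},\bar{y})\in C$, it is Lipschitz near $(\bar{x},\bar{y})$ by Proposition~\ref{continuous}, whereas $\delta_D$ is the only non-Lipschitz summand; hence the sum rule holds with equality and gives $\partial\phi(\bar{x},\bar{y}) = \partial f(\bar{x},\bar{y}) + \mathcal{N}_D(\bar{x},\bar{y})$, using $\partial\delta_D = \mathcal{N}_D$.

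Next I would replace $\mathcal{N}_D$ by $\mathcal{N}_E$. Because $\bar{x}\in X\subseteq\mathcal{O}$ and $\mathcal{O}$ is open, the point $(\bar{x},\bar{y})$ lies in the interior of $\mathcal{O}\times\mathbb{R}^m$, so writing $D = E\cap(\mathcal{O}\times\mathbb{R}^m)$ the extra constraint $x\in\mathcal{O}$ is inactive and contributes nothing to the normal cone; thus $\mathcal{N}_D(\bar{x},\bar{y}) = \mathcal{N}_E(\bar{x},\bar{y})$. To obtain the inclusion~(\ref{inclusionvaluef}) it then remains to verify the elementary inclusion $\sum_{i=1}^{l}\gamma_i\partial g_i(\bar{x},\bar{y}) + \{0\}\times\mathcal{N}_Y(\bar{y}) \subseteq \mathcal{N}_E(\bar{x},\bar{y})$ whenever $\gamma\ge 0$ and $\sum_{i=1}^{l}\gamma_i g_i(\bar{x},\bar{y})=0$. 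This is a direct computation: for $\zeta_i\in\partial g_i(\bar{x},\bar{y})$, $\eta\in\mathcal{N}_Y(\bar{y})$, and any $(x,y)\in E$, the subgradient inequality for each convex $g_i$, the sign $\gamma_i\ge 0$, the feasibility $g_i(x,y)\le 0$, and the complementarity condition combine to show the resulting vector is normal to $E$ at $(\bar{x},\bar{y})$. Feeding this into Lemma~\ref{Prop1} yields~(\ref{inclusionvaluef}); and if the exact normal-cone representation~(\ref{normalconeeq}) is assumed, the same computation upgrades the inclusion to the claimed equality.

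Finally I would pass to the partial form using~(\ref{partialdnew}). Substituting $\partial f = \partial_x f\times\partial_y f$ and $\partial g_i = \partial_x g_i\times\partial_y g_i$, the Minkowski sum $\partial f(\bar{x},\bar{y}) + \sum_{i=1}^{l}\gamma_i\partial g_i(\bar{x},\bar{y})$ factors as $\bigl(\partial_x f + \sum_{i=1}^{l}\gamma_i\partial_x g_i\bigr)\times\bigl(\partial_y f + \sum_{i=1}^{l}\gamma_i\partial_y g_i\bigr)$, after which adding $\{0\}\times\mathcal{N}_Y(\bar{y})$ affects only the second factor. Consequently the membership $(\xi,0)\in\partial f + \sum_{i=1}^{l}\gamma_i\partial g_i + \{0\}\times\mathcal{N}_Y$ splits into $\xi\in\partial_x f(\bar{x},\bar{y}) + \sum_{i=1}^{l}\gamma_i\partial_x g_i(\bar{x},\bar{y})$ together with $0\in\partial_y f(\bar{x},\bar{y}) + \sum_{i=1}^{l}\gamma_i\partial_y g_i(\bar{x},\bar{y}) + \mathcal{N}_Y(\bar{y})$. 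The second condition, combined with $\gamma\ge 0$ and complementarity, is precisely $\gamma\in KT(\bar{x},\bar{y})$, so the right-hand set of~(\ref{inclusionvaluef}) becomes $\bigcup_{\gamma\in KT(\bar{x},\bar{y})}\bigl(\partial_x f + \sum_{i=1}^{l}\gamma_i\partial_x g_i\bigr)$, giving~(\ref{valuefinc}), with equality again inherited from~(\ref{normalconeeq}).

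I expect the main obstacle to be bookkeeping around the normal-cone step rather than any deep difficulty: justifying the equality in the sum rule by checking that only the indicator is non-Lipschitz, cleanly reducing $\mathcal{N}_D$ to $\mathcal{N}_E$ via the interiority of $\bar{x}$ in $\mathcal{O}$, and keeping the cartesian-product decomposition consistent so that the $y$-block reads exactly as the KKT system defining $KT(\bar{x},\bar{y})$. The hypothesis~(\ref{normalconeeq}) conveniently supplies the hard reverse inclusion for the normal cone, so no constraint qualification needs to be established within the argument itself.
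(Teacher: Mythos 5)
Your proposal is correct and follows essentially the same route as the paper: reduce everything to $\partial\phi(\bar{x},\bar{y})$ via Lemma~\ref{Prop1}, apply the convex sum rule (with equality, since $f$ is Lipschitz near $(\bar{x},\bar{y})$) to get $\partial f(\bar{x},\bar{y})+\mathcal{N}_E(\bar{x},\bar{y})$, verify that the Lagrangian combinations lie in $\mathcal{N}_E(\bar{x},\bar{y})$, and then use~(\ref{partialdnew}) to factor the Minkowski sum so that the $y$-block becomes the KKT system. The only cosmetic difference is that the paper obtains the normal-cone inclusion by splitting $\delta_E=\delta_Y+\sum_i\delta_{C_i}$ and using $\gamma_i\partial g_i\subseteq\mathcal{N}_{C_i}$, whereas you verify the same inclusion directly from the subgradient inequality and complementarity; both are equally valid.
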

\begin{proof} Let $\phi_E(x,y):=f(x,y)+\delta_E(x,y)=f(x,y)+\delta_{ Y} (y)+\sum_{i=1}^l \delta_{C_i}(x,y)$ with $C_i:={\{(x,y)|g_i(x,y)\leq 0\}}$. Then 
	by the sum rule (see, e.g., \cite[Theorem 23.8]{rockafellar}\cite[Corollary 1 to Theorem 2.9.8]{clarke1990optimization} ) and the fact that $\mathcal{N}_E=\partial \delta_E$,	
	we have
	\begin{equation}\label{sumrule}
		\partial f(\bar x,\bar y)+\{0\} \times \mathcal{N}_{ Y}(\bar y) +\sum_{i=1}^l\mathcal{N}_{C_i}(\bar x,\bar y)\subseteq\partial\phi_E(\bar x,\bar y)  .\end{equation}
	When $g_i(\bar x,\bar y)<0$, we have $(\bar x,\bar y)\in {\rm int }C_i$ and hence $\gamma_i=0\in 
	\mathcal{N}_{C_i}(\bar x,\bar y)$. Otherwise if $g_i(\bar x,\bar y)=0$, by definition of subdifferential and the normal cone we can show that for any $\gamma_i\geq 0$,
	$\gamma_i \partial g_i(\bar x,\bar y)\subseteq \mathcal{N}_{C_i}(\bar x,\bar y) .$
	Hence together with (\ref{sumrule}), we have  
	$$
	\begin{aligned}
		&\Big\{\partial f(\bar x,\bar y)+ \sum_{i = 1}^{l} \gamma_i \partial g_i(\bar{x}, \bar{y}) +\{0\} \times \mathcal{N}_{ Y}(\bar y) \mid \gamma \in \mathbb{R}^l, \gamma \ge 0, \sum_{i = 1}^{l}\gamma_i g_i(\bar{x},\bar{y}) = 0 \Big\} \\
		 &  \subseteq\, \partial  \phi_E(\bar x,\bar y) .
	\end{aligned}
	$$ 
	Since $\partial \phi_E(\bar x,\bar y)=\partial \phi(\bar x,\bar y)$ where $\phi(x,y)=f(x,y)+\delta_D(x,y)$ {with $D:=\{(x,y)\in {\cal O} \times Y \mid g(x,y)\leq 0\}$,} it follows  from Lemma \ref{Prop1} that 
	\begin{eqnarray*}
	\lefteqn{\partial v(\bar x)= \left \{\xi\mid (\xi, 0) \in \partial \phi(\bar x,\bar y) \right\}= \left \{\xi\mid (\xi, 0) \in \partial  \phi_E(\bar x,\bar y) \right\}}\\
		&\supseteq & \Bigg  \{\xi\mid (\xi,0)\in   \partial f(\bar x,\bar y)+ \sum_{i = 1}^{l} \gamma_i \partial g_i(\bar{x}, \bar{y}) + \{0\} \times \mathcal{N}_{ Y}(\bar y),\\
		&& \hspace*{170pt} \gamma \in \mathbb{R}^l, \gamma \ge 0, \sum_{i = 1}^{l}\gamma_i g_i(\bar{x},\bar{y}) = 0  \Bigg \}.
	\end{eqnarray*} 	
Hence (\ref{inclusionvaluef}) holds.
	Since $f$ is Lipschitz continuous at $(\bar{x}, \bar{y})$, by  the sum rule (see, e.g., \cite[Theorem 23.8]{rockafellar}\cite[Corollary 1 to Theorem 2.9.8]{clarke1990optimization} ), we have $\partial  \phi_E(\bar{x}, \bar{y}) =  \partial f(\bar{x}, \bar{y}) + \mathcal{N}_E(\bar{x}, \bar{y})$. Hence
	if (\ref{normalconeeq}) holds, then
	\begin{eqnarray*}
		\lefteqn{ \partial v(\bar x) =\left \{\xi\mid(\xi, 0) \in \partial \phi(\bar x,\bar y) \right \}= \left \{\xi\mid (\xi, 0) \in \partial  \phi_E(\bar x,\bar y) \right\}}\\
		&= & \left \{\xi\mid(\xi, 0) \in \partial f(\bar{x}, \bar{y}) + \mathcal{N}_E(\bar{x}, \bar{y})  \right \} \\
		&=&\Bigg \{\xi\mid (\xi,0)\in   \partial f(\bar x,\bar y)+ \sum_{i = 1}^{l} \gamma_i \partial g_i(\bar{x}, \bar{y}) + \{0\} \times \mathcal{N}_{ Y}(\bar y),
		\\
		&& \hspace*{200pt}  
		\gamma \ge 0, \sum_{i = 1}^{l}\gamma_i g_i(\bar{x},\bar{y}) = 0 \Bigg  \}.
	\end{eqnarray*} 
	This shows that  the equality holds in (\ref{inclusionvaluef}).
	
	Now suppose that  (\ref{partialdnew}) holds.
	Then for any $\gamma \in \mathbb{R}^l, \gamma \ge 0, \sum_{i = 1}^{l}\gamma_i g_i(\bar{x},\bar{y}) = 0$, by the sum rule (see, e.g., \cite[Theorem 23.8]{rockafellar}\cite[Corollary 1 to Theorem 2.9.8]{clarke1990optimization} ) we have 
	\begin{eqnarray*}
		&&\partial \phi_E(\bar x,\bar y) \\
		&\supseteq& \partial f(\bar{x}, \bar{y}) + \sum_{i = 1}^{l} \gamma_i \partial g_i(\bar{x}, \bar{y}) +\{0\} \times \mathcal{N}_{ Y}(\bar y)  \\
		&=&  \left\{\partial_x f(\bar{x}, \bar{y}) + \sum_{i = 1}^{l} \gamma_i \partial_x g_i(\bar{x}, \bar{y}) \right\} \times \left\{ \partial_y f(\bar{x}, \bar{y}) + \sum_{i = 1}^{l} \gamma_i \partial_y g_i(\bar{x}, \bar{y}) + \mathcal{N}_{Y}(\bar{y})\right\}.
	\end{eqnarray*}
	Combining with \eqref{inclusionvaluef}, we obtain (\ref{valuefinc}).
	Similarly when (\ref{normalconeeq}) holds, the equality holds in (\ref{valuefinc}). \qed
\end{proof}

By the description in (\ref{valuefinc}), $\partial v(\bar x)$ can be calculated as long as (\ref{normalconeeq})  is satisfied. We claim that (\ref{normalconeeq}) is a mild condition. In fact,  by convexity, there always holds the inclusion 
$$\mathcal{N}_E(\bar{x}, \bar{y}) \supseteq \Big\{ \sum_{i = 1}^{l} \gamma_i \partial g_i(\bar{x}, \bar{y}) + \{0\} \times \mathcal{N}_{ Y}(\bar y)  : \gamma\ge 0, \sum_{i = 1}^{l}\gamma_i g_i(\bar{x},\bar{y}) = 0 \Big\}.$$ 
By virtue of \cite[Theorem 4.1]{calmness_multifunction}, the reverse inclusion also follows under standard {constraint qualifications}. Some suffcient conditions for (\ref{normalconeeq}) are thus summarized in the following proposition. 
\begin{proposition}\label{Prop4.4}  Equation (\ref{normalconeeq})  holds provided that the set-valued map
	$$\Psi(\alpha):=\{ (x,y) \in \mathbb{R}^n \times Y: g(x,y)+\alpha \leq 0\}$$ 
	is calm at $(0, \bar x,\bar y)$, i.e., there exist $\kappa, \delta >0$ such that
	$$ {\rm dist}_E(x,y) \leq \kappa \|\max\{g(x,y),0\}\| \qquad \forall (x,y)\in B_\delta (\bar x,\bar y)\cap E;$$ in particular if one of the following conditions:
	\begin{itemize}
		\item[(a)] The linear constraint qualification holds:  $g(x,y)$  is  an affine mapping of $(x, y)$ and $ Y$  is convex polyhedral.
		\item[(b)] The Slater condition holds:  there exists a point $(x_0, y_0)\in \mathbb{R}^n\times Y$ such that $g(x_0, y_0)<0$.
	\end{itemize} 
\end{proposition}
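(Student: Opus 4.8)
The plan is to deduce the nontrivial inclusion in (\ref{normalconeeq}) from the calmness hypothesis by invoking \cite[Theorem 4.1]{calmness_multifunction}, and then to verify that each of the conditions (a) and (b) forces this calmness. As already observed just before the statement, the inclusion
\[
\mathcal{N}_E(\bar{x}, \bar{y}) \supseteq \Big\{ \sum_{i = 1}^{l} \gamma_i \partial g_i(\bar{x}, \bar{y}) + \{0\} \times \mathcal{N}_{Y}(\bar{y}) : \gamma\ge 0,\ \textstyle\sum_{i=1}^l \gamma_i g_i(\bar x,\bar y)=0 \Big\}
\]
always holds by convexity, so only the reverse inclusion $\subseteq$ needs to be proved.

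First I would regard $E$ as the solution set of the convex system $g(x,y)\le 0$ together with the abstract constraint $(x,y)\in\mathbb{R}^n\times Y$, whose canonical perturbation map is exactly $\Psi$. The stated calmness of $\Psi$ at $(0,\bar x,\bar y)$, equivalently the local error bound $\mathrm{dist}_E(x,y)\le\kappa\,\|\max\{g(x,y),0\}\|$, is precisely the qualification under which \cite[Theorem 4.1]{calmness_multifunction} delivers the multiplier representation of $\mathcal{N}_E(\bar x,\bar y)$, namely the $\subseteq$ counterpart of the displayed inclusion. Combining the two inclusions yields (\ref{normalconeeq}); here I use $\mathcal{N}_{\mathbb{R}^n\times Y}(\bar x,\bar y)=\{0\}\times\mathcal{N}_Y(\bar y)$, and the complementarity condition $\sum_i\gamma_i g_i(\bar x,\bar y)=0$ encodes that only active constraints carry nonzero multipliers.

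It then remains to check that (a) and (b) each imply calmness at $(0,\bar x,\bar y)$. Under (a), $g$ is affine and $Y$ is convex polyhedral, so the graph $\{(\alpha,x,y):(x,y)\in\Psi(\alpha)\}$ is polyhedral and $\Psi$ is a polyhedral multifunction; by Robinson's upper-Lipschitz property such a map is calm with a uniform modulus at every point of its graph, in particular at $(0,\bar x,\bar y)$. Under (b), the point $(x_0,y_0)\in\mathbb{R}^n\times Y$ with $g(x_0,y_0)<0$ is the classical Slater condition for the convex system defining $E$; it guarantees the Robinson constraint qualification and hence metric regularity of $\Psi$ at $(0,\bar x,\bar y)$, which is strictly stronger than calmness. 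In either case calmness holds and the conclusion follows.

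The step I expect to be the main obstacle is the first one: confirming that the calmness/error-bound condition is exactly what is required to apply \cite[Theorem 4.1]{calmness_multifunction} and to transfer from the geometric normal cone $\mathcal{N}_E$ to the multiplier set, while correctly accounting for the abstract constraint $(x,y)\in\mathbb{R}^n\times Y$ and its normal cone $\{0\}\times\mathcal{N}_Y(\bar y)$. By contrast, the derivations of calmness in (a) and (b) are standard consequences of polyhedrality and of Slater's condition in the convex setting, respectively.
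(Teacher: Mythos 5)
Your proposal is correct and follows essentially the same route as the paper: the reverse inclusion is obtained from \cite[Theorem 4.1]{calmness_multifunction} under the calmness hypothesis, combined with the trivial inclusion that holds by convexity, and (a), (b) are handled as standard sufficient conditions for calmness. The paper simply states that (a) and (b) are ``well-known'' to imply calmness, whereas you supply the standard justifications (Robinson's upper-Lipschitz property of polyhedral multifunctions for (a), and Slater implying metric regularity for (b)); this is a matter of detail, not of approach.
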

\begin{proof}By virtue of \cite[Theorem 4.1]{calmness_multifunction}, the reverse inclusion $$\mathcal{N}_E(\bar{x}, \bar{y}) \subseteq \Big\{ \sum_{i = 1}^{l} \gamma_i \partial g_i(\bar{x}, \bar{y}) + \{0\} \times \mathcal{N}_{ Y}(\bar y)  : \gamma\ge 0, \sum_{i = 1}^{l}\gamma_i g_i(\bar{x},\bar{y}) = 0 \Big\}$$ 
	holds provided that the system $y \in Y, g(x,y)\leq 0$ is calm at $(0, \bar x, \bar y)$. It is well-known that (a) or (b) is a sufficient condition for calmness. \qed
\end{proof}


\subsection{Motivations for studying the approximate bilevel program}
There are three motivations to consider the approximate program $({\rm VP})_{\epsilon}$. 
First, as shown in \cite{LinXuYe}, the solutions of $({\rm VP})_{\epsilon}$ approximate a true solution of the original bilevel program (DCBP) as $\epsilon$
approaches zero. Second,
the proximity from a local minimizer of $({\rm VP})_{\epsilon}$ to the solution set of  (DCBP) can be controlled by adjusting the value of $\epsilon$. The third motivation is that the approximate program $({\rm VP})_{\epsilon}$ when $\epsilon>0$ would satisfy the required constraint qualification automatically. We present the second motivation in the following proposition.
\begin{proposition}
	Suppose  $\mathcal{S}^*$,  the solution set of problem $({\rm VP})$,   is compact and the value function $v(x)$ is continuous.  Then for any $\delta > 0$, there exists $\bar{\epsilon} > 0$ such that for any $\epsilon \in (0,\bar{\epsilon}]$, there exists $(x_{\epsilon},y_{\epsilon})$ which is  a local minimum of $\epsilon$-approximation problem $({\rm VP})_{\epsilon}$ with  $\mathrm{dist}((x_{\epsilon},y_{\epsilon}),\mathcal{S}^*) < \delta$.
\end{proposition}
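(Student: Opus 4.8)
The plan is to prove the statement by a \emph{localization-plus-limiting} argument. Write $f_0(x,y):=F_1(x,y)-F_2(x,y)$ and $\psi(x,y):=f(x,y)-v(x)$, and let $v^*$ be the optimal value of $({\rm VP})$. Note $f_0$ is continuous (a difference of convex functions) and $\psi$ is continuous (since $f$ is Lipschitz and $v$ is continuous by hypothesis, indeed Lipschitz by Lemma \ref{Prop1}). Fix $\delta>0$ and set $\bar N:=\{(x,y):\mathrm{dist}((x,y),\mathcal{S}^*)\le\delta\}$; since $\mathcal{S}^*$ is compact, $\bar N$ is compact. For each $\epsilon>0$ I would consider the \emph{localized} problem of minimizing $f_0$ over $R_\epsilon:=\bar N\cap C\cap\{(x,y):\psi(x,y)\le\epsilon\}$. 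This set is compact (the intersection of the compact $\bar N$ with the closed sets $C$ and $\{\psi\le\epsilon\}$) and nonempty (it contains $\mathcal{S}^*$), so a global minimizer $(x_\epsilon,y_\epsilon)$ of $f_0$ over $R_\epsilon$ exists.

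The first key observation is that whenever $\mathrm{dist}((x_\epsilon,y_\epsilon),\mathcal{S}^*)<\delta$, the point $(x_\epsilon,y_\epsilon)$ is automatically a local minimizer of the unrestricted problem $({\rm VP})_\epsilon$: the open set $\{\mathrm{dist}(\cdot,\mathcal{S}^*)<\delta\}$ is then a neighborhood of $(x_\epsilon,y_\epsilon)$ contained in $\bar N$, so every point feasible for $({\rm VP})_\epsilon$ near $(x_\epsilon,y_\epsilon)$ already lies in $R_\epsilon$ and hence cannot give a smaller value of $f_0$. Thus it remains only to produce $\bar\epsilon>0$ so that $\mathrm{dist}((x_\epsilon,y_\epsilon),\mathcal{S}^*)<\delta$ whenever $0<\epsilon\le\bar\epsilon$.

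To pin the distance down I would argue by contradiction. If the conclusion failed there would be $\epsilon_k\downarrow 0$ for which $({\rm VP})_{\epsilon_k}$ admits no local minimizer within distance $<\delta$ of $\mathcal{S}^*$; by the previous step the corresponding localized minimizer $(x_k,y_k)$ must then satisfy $\mathrm{dist}((x_k,y_k),\mathcal{S}^*)=\delta$. The inclusion $R_0\subseteq R_{\epsilon_k}$, where $R_0:=\bar N\cap C\cap\{\psi\le0\}$, yields the uniform bound $f_0(x_k,y_k)\le\min_{R_0}f_0=v^*$; here the equality holds because $\mathcal{S}^*\subseteq R_0$ (solutions of $({\rm VP})$ are feasible and lie at distance $0$ from $\mathcal{S}^*$) while $R_0$ is contained in the feasible region of $({\rm VP})$. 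Since $\{(x_k,y_k)\}\subseteq\bar N$ is bounded, pass to a convergent subsequence $(x_k,y_k)\to(\bar x,\bar y)$; continuity of $\psi$ gives $\psi(\bar x,\bar y)\le0$ and closedness of $C$ gives $(\bar x,\bar y)\in C$, so $(\bar x,\bar y)$ is feasible for $({\rm VP})$, while continuity of $f_0$ gives $f_0(\bar x,\bar y)\le v^*$. Feasibility then forces $f_0(\bar x,\bar y)\ge v^*$, hence $(\bar x,\bar y)\in\mathcal{S}^*$, contradicting $\mathrm{dist}((\bar x,\bar y),\mathcal{S}^*)=\lim_k\mathrm{dist}((x_k,y_k),\mathcal{S}^*)=\delta>0$.

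The delicate point, and the main obstacle, is precisely this last step: ensuring the localized minimizer does not escape to the boundary of the $\delta$-neighborhood. This is where compactness of $\mathcal{S}^*$ (giving the compact $\bar N$ and the convergent subsequence), continuity of the value function (giving continuity of the constraint $\psi$, hence both closedness of $\{\psi\le\epsilon\}$ and the limit $\psi(\bar x,\bar y)\le0$), and the value comparison $f_0(x_k,y_k)\le v^*$ must all be combined. Everything else—existence of the localized minimizer and the implication ``interior minimizer $\Rightarrow$ local minimizer of $({\rm VP})_\epsilon$''—is routine.
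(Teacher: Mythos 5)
Your proposal is correct and follows essentially the same route as the paper's proof: both introduce the localized problem over the closed $\delta$-neighborhood of $\mathcal{S}^*$, observe that a localized minimizer in the interior is automatically a local minimizer of $({\rm VP})_\epsilon$ (so the contradiction hypothesis forces it onto the boundary $\mathrm{dist}=\delta$), use the value comparison $f_0\le v^*$, and pass to a convergent subsequence to obtain a limit point that is feasible and optimal for $({\rm VP})$ yet at distance $\delta$ from $\mathcal{S}^*$. Your write-up in fact spells out more explicitly than the paper why the boundary case must occur and why $f_0(x_k,y_k)\le v^*$.
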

\begin{proof}
	To the  contrary, assume  that there exist $\delta > 0$ and sequence $\{\epsilon^k\}$ with $\epsilon^k \downarrow 0$ as $k\rightarrow \infty$ such that there does not exist $(x, y)$ being a local minimum of $\epsilon^k$-approximation problem $({\rm VP})_{\epsilon_k}$ satisfying $\mathrm{dist}((x^k,y^k),\mathcal{S}^*) < \delta$ for all $k$. Consider a point $(\hat{x}^k, \hat{y}^k)$, which is a global minimum to the following problem 
	\begin{equation*}
		\begin{aligned}
			\min_{(x,y)\in C} ~~& F(x,y) \\
			s.t. ~~~~& f(x,y) - v(x) \le \epsilon, \\
			&\mathrm{dist}((x,y),\mathcal{S}^*) \le \delta.
		\end{aligned}
	\end{equation*}
	Then by assumption, it holds that $\mathrm{dist}((\hat{x}^k, \hat{y}^k),\Sigma^*) = \delta$ and $F(\hat{x}^k, \hat{y}^k) \le F^*$, where $F^*$ is the optimal value of problem $({\rm VP})$. As $\mathcal{S}^*$ is compact, sequence $\{(\hat{x}^k, \hat{y}^k)\}$ is bounded and we can assume without lost of generality that $(\hat{x}^k, \hat{y}^k) \rightarrow (\bar{x},\bar{y})$ as $k \rightarrow \infty$. Since the value function $v$ is continuous, by taking $k \rightarrow \infty$ in $(\hat{x}^k, \hat{y}^k)\in C$ and $f(\hat{x}^k, \hat{y}^k) - v(\hat{x}^k) \le \epsilon^k$, we obtain the feasibility of the limit point $(\bar x,\bar y)$ for  problem (VP). Next, by taking $k \rightarrow \infty$ in $\mathrm{dist}((\hat{x}^k, \hat{y}^k),\mathcal{S}^*) = \delta$ and $F(\hat{x}^k, \hat{y}^k) \le F^*$, we obtain that $\mathrm{dist}((\bar x,\bar y),\mathcal{S}^*) = \delta$ and $F(\bar x,\bar y) \le F^*$, a contradiction. \qed
\end{proof}

Before we clarify the third motivation, 
we define the concepts of  NNAMCQ for problem $({\rm VP})$ and ENNAMCQ for problem $({\rm VP})_\epsilon$ where $\epsilon \geq 0$.
\begin{definition}
	Let $(\bar{x},\bar{y})$ be a feasible solution to problem $({\rm VP})$. We say that  NNAMCQ holds at $(\bar{x},\bar{y})$ for problem $({\rm VP})$ if
	\begin{equation}\label{newNNAMCQ}
		0 \notin \partial f(\bar{x},\bar{y}) - \partial v(\bar{x})\times \{0\}+ \mathcal{N}_C(\bar{x},\bar{y}).
	\end{equation}
	Let $(\bar{x},\bar{y}) \in C$, we say that ENNAMCQ holds at $(\bar{x},\bar{y})$ for problem $(VP)_\epsilon$ if either $f(\bar x,\bar y) - v(\bar x) < \epsilon$ or $f(\bar x,\bar y) - v(\bar x) \ge \epsilon$ but (\ref{newNNAMCQ}) holds.
\end{definition}
\begin{proposition}\label{Prop4.3}Let $(\bar{x},\bar{y})$ be a feasible solution to problem $(VP)$. Suppose that  $v(x) $ is Lipschitz continuous around $\bar{x}$. Then NNAMCQ never holds at $(\bar{x},\bar{y})$.
\end{proposition}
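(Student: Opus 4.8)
The plan is to prove the negation directly: NNAMCQ fails at $(\bar{x},\bar{y})$ precisely when
$$0 \in \partial f(\bar{x},\bar{y}) - \partial v(\bar{x})\times \{0\}+ \mathcal{N}_C(\bar{x},\bar{y}),$$
so I would show that this membership holds automatically at every feasible point. First I would extract the consequences of feasibility for $({\rm VP})$: the point satisfies $(\bar{x},\bar{y})\in C$ (so in particular $\bar{y}\in\mathcal{F}(\bar{x})$) together with $f(\bar{x},\bar{y})-v(\bar{x})\le 0$. Since by the very definition of the value function $v(\bar{x})=\inf\{f(\bar{x},y): y\in\mathcal{F}(\bar{x})\}\le f(\bar{x},\bar{y})$, these two facts force $f(\bar{x},\bar{y})=v(\bar{x})$, i.e. $\bar{y}\in S(\bar{x})$. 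This is exactly the hypothesis needed to invoke Lemma~\ref{Prop1}. Moreover, because $v$ is assumed Lipschitz continuous around $\bar{x}$ (and convex by Lemma~\ref{Prop1}), its subdifferential is nonempty, so I may pick some $\xi\in\partial v(\bar{x})$.

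Next comes the core computation. By Lemma~\ref{Prop1}, the chosen $\xi$ satisfies $(\xi,0)\in\partial\phi(\bar{x},\bar{y})$ with $\phi(x,y)=f(x,y)+\delta_D(x,y)$ and $D=\{(x,y)\in\mathcal{O}\times Y : g(x,y)\le 0\}$. Since $f$ is finite-valued and Lipschitz near $(\bar{x},\bar{y})$ (by Proposition~\ref{continuous}), the sum rule in Proposition~\ref{sum} applies with equality, giving
$$(\xi,0)\in\partial\phi(\bar{x},\bar{y})=\partial f(\bar{x},\bar{y})+\mathcal{N}_D(\bar{x},\bar{y}).$$
I would then relate $\mathcal{N}_D$ to $\mathcal{N}_C$ by observing $C\subseteq D$: indeed $C=\{(x,y)\in X\times Y: g(x,y)\le 0\}$ and $X\subseteq\mathcal{O}$, so every point of $C$ lies in $D$. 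For convex sets this inclusion reverses at a common point, $\mathcal{N}_D(\bar{x},\bar{y})\subseteq\mathcal{N}_C(\bar{x},\bar{y})$, whence $(\xi,0)\in\partial f(\bar{x},\bar{y})+\mathcal{N}_C(\bar{x},\bar{y})$. Subtracting $(\xi,0)\in\partial v(\bar{x})\times\{0\}$ yields
$$0 \in \partial f(\bar{x},\bar{y}) - \partial v(\bar{x})\times \{0\}+ \mathcal{N}_C(\bar{x},\bar{y}),$$
which is exactly the failure of NNAMCQ.

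The argument is short because all the analytic weight is carried by Lemma~\ref{Prop1}, so the only genuine care needed is in the bookkeeping of the three steps: verifying feasibility upgrades to $\bar{y}\in S(\bar{x})$ (making Lemma~\ref{Prop1} applicable), confirming the sum rule is exact via the Lipschitz property of $f$, and checking the inclusion $\mathcal{N}_D\subseteq\mathcal{N}_C$ with the correct orientation. The main conceptual obstacle is recognizing that the $(\xi,0)$-component coming from $\partial\phi$ is precisely the object that cancels the $\partial v(\bar{x})\times\{0\}$ term in the NNAMCQ expression; this is the structural reason the value-function constraint is intrinsically degenerate and motivates the relaxation to $({\rm VP})_\epsilon$ with $\epsilon>0$ discussed afterwards.
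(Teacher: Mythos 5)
Your proof is correct, but it follows a genuinely different route from the paper's. The paper argues in two lines: since $f(x,y)-v(x)\ge 0$ for every $(x,y)\in C$ and equals $0$ at the feasible point $(\bar x,\bar y)$, that point is a global minimizer of $f(x,y)-v(x)$ over $C$, and the nonsmooth stationarity condition for this DC function (using $\partial^c(f-v)\subseteq\partial f-\partial v\times\{0\}$) immediately yields $0\in\partial f(\bar x,\bar y)-\partial v(\bar x)\times\{0\}+\mathcal{N}_C(\bar x,\bar y)$. You instead construct the violating element explicitly: feasibility upgrades to $\bar y\in S(\bar x)$, Lemma~\ref{Prop1} gives $(\xi,0)\in\partial\phi(\bar x,\bar y)=\partial f(\bar x,\bar y)+\mathcal{N}_D(\bar x,\bar y)$ for any $\xi\in\partial v(\bar x)$ (the sum rule applies with equality since $f$ is locally Lipschitz), and $\mathcal{N}_D\subseteq\mathcal{N}_C$ because $C\subseteq D$ are convex sets containing the point. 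Your approach buys a constructive identification of the abnormal multiplier --- it shows exactly which subgradient of $v$ produces the degeneracy, and it reuses machinery (Lemma~\ref{Prop1}, Propositions~\ref{sum} and~\ref{continuous}) already established in the paper rather than appealing to a separate DC optimality condition; you were also right to note that $\partial v(\bar x)\neq\emptyset$ must be checked, since otherwise the NNAMCQ set would be empty and the condition would hold vacuously. The paper's argument is shorter and conceptually cleaner in that it exposes \emph{why} the constraint is degenerate (every feasible point is a global minimizer of the constraint function), but both are valid.
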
 
\begin{proof}
	By definition of the value function, we can never have $f(\bar x,\bar y) - v(\bar x) < 0$ and we always have 
	$f(\bar x,\bar y) - v(\bar x) = 0$. But since $(\bar{x},\bar{y})$ is a feasible solution to problem $({\rm VP})$, it is easy to see that $(\bar{x},\bar{y})$  must be a solution to the following problem 
	$$\min_{(x,y)\in C} \left \{ f(x,y)-v(x) \right \}.$$ But by the optimality condition we must have 
	$$0 \in \partial  f(\bar{x},\bar{y}) - \partial v(\bar{x})\times \{0\}+ \mathcal{N}_C(\bar{x},\bar{y}).$$  This means that  (\ref{newNNAMCQ}) would never hold. \qed
\end{proof}
Although the NNAMCQ never hold for (VP),  {for  $\epsilon > 0$,  ENNAMCQ is a standard constraint qualification for $({\rm VP})_\epsilon$;  see e.g.  \cite[Proposition 8]{LinXuYe}.  Moreover }thanks to the model structures, it 
holds automatically for $({\rm VP})_\epsilon$ if $\epsilon > 0$.  Hence, according to the DC theories established in the preceding section, powerful DCA can be employed to solve $({\rm VP})_\epsilon$.

\begin{proposition}\label{prop4.2}
	For any $(\bar{x},\bar{y}) \in C$, 
	problem $({\rm VP})_{\epsilon}$ with $\epsilon>0$  satisfies ENNAMCQ at $(\bar{x},\bar{y})$.
\end{proposition}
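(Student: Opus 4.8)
The plan is to verify the two defining alternatives of ENNAMCQ directly, splitting on the value of the constraint residual. First I would record the universal inequality $f(\bar x,\bar y)\ge v(\bar x)$: since $(\bar x,\bar y)\in C$ means $\bar x\in X$, $\bar y\in Y$ and $g(\bar x,\bar y)\le 0$, the point $\bar y$ is feasible for the lower level problem $(P_{\bar x})$, so $f(\bar x,\bar y)\ge \inf\{f(\bar x,y):y\in{\cal F}(\bar x)\}=v(\bar x)$. Consequently the residual $f(\bar x,\bar y)-v(\bar x)$ is always nonnegative. If $f(\bar x,\bar y)-v(\bar x)<\epsilon$, the first alternative in the definition of ENNAMCQ holds and there is nothing more to prove, so the whole argument reduces to the complementary case $f(\bar x,\bar y)-v(\bar x)\ge\epsilon$.

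In that remaining case, since $\epsilon>0$ we have the strict inequality $f(\bar x,\bar y)>v(\bar x)$, which says precisely that $\bar y$ is \emph{not} a minimizer of the convex program defining $v(\bar x)$, namely $\min\{f(\bar x,y):y\in{\cal F}(\bar x)\}$ over the convex set ${\cal F}(\bar x)=\{y\in Y:g(\bar x,y)\le 0\}$. Because $f(\bar x,\cdot)$ is convex and continuous near $\bar y$, the sum rule (Proposition~\ref{sum}) makes the convex stationarity condition both necessary and sufficient for optimality; the failure of optimality therefore yields
$$0\notin \partial_y f(\bar x,\bar y)+\mathcal{N}_{{\cal F}(\bar x)}(\bar y). \qquad (\star)$$

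I would then establish the second alternative of ENNAMCQ, namely (\ref{newNNAMCQ}), by contradiction. Suppose $0\in\partial f(\bar x,\bar y)-\partial v(\bar x)\times\{0\}+\mathcal{N}_C(\bar x,\bar y)$; then there exist $a=(a_x,a_y)\in\partial f(\bar x,\bar y)$, $\xi\in\partial v(\bar x)$ and $n=(n_x,n_y)\in\mathcal{N}_C(\bar x,\bar y)$ with $a-(\xi,0)+n=0$, whose $y$-block reads $a_y+n_y=0$. The term $-\partial v(\bar x)\times\{0\}$ contributes nothing to this $y$-block, which is exactly what enables the reduction to the fiber ${\cal F}(\bar x)$. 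Two slice inclusions then finish the job: by the inclusion (\ref{partialsubg}) of Proposition~\ref{partiald} one has $a_y\in\partial_y f(\bar x,\bar y)$; and testing the normal-cone inequality defining $\mathcal{N}_C(\bar x,\bar y)$ only against points $(\bar x,y')$ with $y'\in{\cal F}(\bar x)$, which lie in $C$ because $\bar x\in X$, gives $\langle n_y,y'-\bar y\rangle\le 0$ for all such $y'$, that is $n_y\in\mathcal{N}_{{\cal F}(\bar x)}(\bar y)$. Hence $0=a_y+n_y\in\partial_y f(\bar x,\bar y)+\mathcal{N}_{{\cal F}(\bar x)}(\bar y)$, contradicting $(\star)$.

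I expect the only delicate point to be the passage from the full normal cone $\mathcal{N}_C(\bar x,\bar y)$ to the normal cone of the horizontal slice ${\cal F}(\bar x)$, but this is immediate from the definition once the test points are restricted to $\{\bar x\}\times{\cal F}(\bar x)\subseteq C$; notably, no constraint qualification on the lower level and no partial-derivative \emph{equality} are needed here, only the always-valid inclusion (\ref{partialsubg}). Combining the two cases shows ENNAMCQ holds at every $(\bar x,\bar y)\in C$ whenever $\epsilon>0$. Conceptually this argument is the exact mirror of Proposition~\ref{Prop4.3}: there the slack is identically zero, forcing stationarity of $\bar y$ in the lower level and thereby ruling out NNAMCQ, whereas here the strictly positive slack $\epsilon$ forces \emph{non}-stationarity of $\bar y$ and thus guarantees the constraint qualification.
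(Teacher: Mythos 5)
Your proof is correct and follows essentially the same route as the paper's: split on whether the residual $f(\bar x,\bar y)-v(\bar x)$ is below $\epsilon$, and in the remaining case use the always-valid inclusion (\ref{partialsubg}) together with the slice inclusion $\mathcal{N}_C(\bar x,\bar y)\subseteq\mathbb{R}^n\times\mathcal{N}_{\mathcal{F}(\bar x)}(\bar y)$ to extract $0\in\partial_y f(\bar x,\bar y)+\mathcal{N}_{\mathcal{F}(\bar x)}(\bar y)$, which forces $\bar y\in S(\bar x)$ and contradicts $f(\bar x,\bar y)-v(\bar x)\ge\epsilon>0$. The only cosmetic difference is that you derive the slice normal-cone inclusion directly from the definition while the paper routes it through (\ref{partialsubg}) applied to $\delta_C$; the contradiction itself is organized in mirror image but is logically identical.
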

\begin{proof} If $f(x,y) - v(x) < \epsilon$ holds, then by definition, ENNAMCQ holds at $(\bar{x},\bar{y})$. Now
	suppose that $f(\bar{x},\bar{y}) - v(\bar{x}) \ge \epsilon$ and ENNAMCQ does not hold, i.e., 
	\[
	0 \in \partial f(\bar{x},\bar{y}) -  \partial v(\bar{x})\times \{0\}+ \mathcal{N}_C(\bar{x},\bar{y}).
	\]
	It follows from {the partial subdifferentiation formula }(\ref{partialsubg})   that
	\begin{equation}\label{NNAMCQ_proof_eq1}
		0 \in \begin{bmatrix}
			\partial_x f(\bar{x},\bar{y})  - \partial v(\bar{x}) \\
			\partial_y f(\bar{x},\bar{y})
		\end{bmatrix} + \mathcal{N}_C(\bar{x},\bar{y}).
	\end{equation}
	By
	(\ref{partialsubg})  we have
	$$\mathcal{N}_C(\bar{x},\bar{y})=  \partial \delta_C(\bar x,\bar y) \subseteq \partial_x \delta_C(\bar x,\bar y)\times \partial_y \delta_C(\bar x,\bar y)\subseteq  \mathbb{R}^n \times \mathcal{N}_{C(\bar{x})}(\bar{y}),$$
	where 
	$
	C(\bar{x}) := \{y \in Y\mid g_i(\bar{x},y) \le 0 , i = 1,\ldots,l \}.
	$
	Thus, it follows from \eqref{NNAMCQ_proof_eq1} that 
	\[
	0 \in \partial_y f(\bar{x},\bar{y}) + \mathcal{N}_{C(\bar{x})}(\bar{y}),
	\]
	which further implies that
	$
	\bar{y} \in \mathcal{S}(\bar{x}).
	$
	However, an obvious contradiction to the assumption that $f(\bar{x},\bar{y}) - v(\bar{x}) \ge \epsilon$ occurs and thus the desired conclusion follows immediately. \qed
\end{proof}

By using the definition of KKT points for (DC) in  Definition \ref{Defn3.1}, under the assumption that the value function is locally Lipschitz continiuous, we define  KKT points for problem $({\rm VP})_\epsilon$.

\begin{definition}
	We say a  point $(\bar{x},\bar{y})$ is a KKT point of  problem $({\rm VP})_\epsilon$ with $\epsilon \geq  0$ if there exists $\lambda\geq 0$ such that
	\begin{equation*}\label{ClarkKKT}
		\left\{ \quad \begin{aligned}
			&0 \in \partial F_1(\bar{x},\bar{y}) -\partial F_2(\bar{x},\bar{y})+ \lambda \partial f(\bar{x},\bar{y}) - \lambda \partial v(\bar{x}) \times \{0\}+ \mathcal{N}_C(\bar{x},\bar{y}), \\
			& f(\bar{x},\bar{y}) - v(\bar{x}) - \epsilon \le 0, \quad  {\lambda} \left(f(\bar{x},\bar{y}) - v(\bar{x}) - \epsilon  \right) = 0.
		\end{aligned}\right.
	\end{equation*}
\end{definition}
%
By virtue of Proposition \ref{prop4.2} and Theorem \ref{Defn3.1}, we have the following necessary optimality condition. Since the issue of constraint qualifications for problem (VP) is complicated and it is not the main concern in this paper, we refer the reader to discussions on this topic in \cite{KuangYe,Yebook}.

\begin{theorem}\label{convergence1}
	Let $(\bar{x},\bar{y})$ be a local optimal solution to  problem $({\rm VP})_\epsilon$ with $\epsilon \geq  0$. 
	Suppose 
	either $\epsilon >  0$ or $\epsilon = 0$ and a constraint qualification holds. Then $(\bar{x},\bar{y})$ is a KKT point of problem $({\rm VP})_\epsilon$.
\end{theorem}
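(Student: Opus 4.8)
The plan is to recognize $({\rm VP})_\epsilon$ as a concrete instance of the abstract problem (DC) and then invoke the general first-order optimality result, Proposition \ref{Thm3.1}. Concretely, I would set $z=(x,y)$, $\Sigma = C$, and identify the data by $g_0 = F_1$, $h_0 = F_2$, $g_1(x,y) = f(x,y)$ and $h_1(x,y) = v(x)+\epsilon$, so that $f_0 = F_1-F_2$ and $f_1 = f-v-\epsilon$ reproduce exactly the objective and constraint of $({\rm VP})_\epsilon$. Since $v$ is convex by Lemma \ref{Prop1} and the remaining functions are convex by the standing assumptions, this is a bona fide (DC) problem. Translating the KKT system of Definition \ref{Defn3.1} back through this identification is immediate: because $h_1$ depends only on $x$, its subdifferential is $\partial v(\bar x)\times\{0\}$ (Proposition \ref{partiald}(b)), so the inclusion $0\in \partial g_0 - \partial h_0 + \lambda(\partial g_1 - \partial h_1) + \mathcal{N}_\Sigma$ collapses precisely to the KKT inclusion for $({\rm VP})_\epsilon$.

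With the identification in place, the two hypotheses of Proposition \ref{Thm3.1} must be verified: local Lipschitz continuity of $g_0,h_0,g_1,h_1$ around $(\bar x,\bar y)$, and NNAMCQ at $(\bar x,\bar y)$. The Lipschitz requirement is routine, since $F_1,F_2,f$ are convex on an open set containing $C$ and hence locally Lipschitz near $(\bar x,\bar y)\in C$ by Proposition \ref{continuous}, while $v$ --- and therefore $h_1 = v+\epsilon$ --- is Lipschitz around $\bar x\in X$ by Lemma \ref{Prop1}.

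The constraint qualification is where the two cases of the theorem diverge, and this is the crux of the argument. For $\epsilon>0$, Proposition \ref{prop4.2} already guarantees that ENNAMCQ holds at every point of $C$, in particular at the local minimizer $(\bar x,\bar y)$, which is feasible. At a feasible point ENNAMCQ restricts to NNAMCQ: if the constraint is inactive the alternative $f-v<\epsilon$ holds, and if it is active then $f-v=\epsilon$ forces the nondegeneracy condition (\ref{newNNAMCQ}); either way the NNAMCQ hypothesis of Proposition \ref{Thm3.1} is met, and the proposition applies verbatim to produce the multiplier $\lambda\ge 0$. For $\epsilon=0$ the situation is genuinely different: Proposition \ref{Prop4.3} shows that the value-function constraint is degenerate, so the VP-specific NNAMCQ can never hold and Proposition \ref{Thm3.1} cannot be routed through NNAMCQ. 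Here I would instead invoke the separately assumed constraint qualification together with the nonsmooth Lagrange multiplier rule in terms of Clarke subgradients (the same rule underlying Proposition \ref{Thm3.1}), using that all defining functions are locally Lipschitz so that the Clarke generalized gradients coincide with the convex subdifferentials.

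The main obstacle is precisely this $\epsilon=0$ case: one must resist reusing NNAMCQ and instead rely on an external, qualitatively different constraint qualification, exactly because Proposition \ref{Prop4.3} shows the value-function constraint fails NNAMCQ at every feasible point. The $\epsilon>0$ case, by contrast, is essentially automatic once the identification with (DC) is made, since the required qualification is supplied for free by Proposition \ref{prop4.2}.
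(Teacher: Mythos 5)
Your proposal is correct and follows essentially the same route as the paper, which derives this theorem directly from Proposition \ref{prop4.2} combined with the multiplier rule of Proposition \ref{Thm3.1} after identifying $({\rm VP})_\epsilon$ with (DC) via $g_0=F_1$, $h_0=F_2$, $g_1=f$, $h_1=v+\epsilon$, $\Sigma=C$. Your write-up merely fills in the details the paper leaves implicit (the Lipschitz verification via Lemma \ref{Prop1} and Proposition \ref{continuous}, the reduction of ENNAMCQ to NNAMCQ at a feasible point, and the role of the externally assumed constraint qualification when $\epsilon=0$), all of which are accurate.
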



\subsection{Inexact proximal DCA for solving $({\rm VP})_{\epsilon}$}

In this subsection we  implement the proposed DC algorithms in Section \ref{sec:DC} to solve $({\rm VP})_{\epsilon}$. To proceed, let us 
describe  iP-DCA to solve $({\rm VP})_{\epsilon}$. 
Given a current iterate $(x^k, y^k)$ for each $k=0,1,\ldots$, solving the lower level problem parameterized  by $x^k$
\begin{equation*}\label{LL-iter}
	\min_{y \in Y}   f(x^k,y), ~ s.t.~ g(x^k,y) \le 0
\end{equation*}
leads to a solution $\tilde{y}^k \in {S}(x^k)$ and a corresponding KKT multiplier $\gamma^k \in KT(x^k,\tilde{y}^k)$. Select 
\begin{eqnarray}
	\xi_0^k\in \partial F_2(x^k,y^k), \quad \xi_1^k \in \partial_x f(x^k,\tilde{y}^k) + \sum_{i = 1}^{l} \gamma^k_i \partial_x g_i(x^k,\tilde{y}^k). \label{subgvaluefunction}
\end{eqnarray}
Note that according to (\ref{valuefinc}) in Theorem \ref{Thm4.1}, if the partial derivative formula holds, then $\xi_1^k \in \partial_x f(x^k,\tilde{y}^k) + \sum_{i = 1}^{l} \gamma^k_i \partial_x g_i(x^k,\tilde{y}^k)$ is an element of the subdifferential $ \partial v(x^k)$. 
Compute $(x^{k+1},y^{k+1})$ as an approximate minimizer of the strongly convex subproblem for $({\rm VP})_{\epsilon}$ given by				
\begin{equation} \label{DCA2_subproblem}
	\begin{aligned}
		\underset{(x,y)\in C}{\text{min}} ~~   & \,\,  F_1(x,y) -\langle \xi_0^k, (x,y)\rangle + \frac{\rho}{2} \|(x,y) - (x^k,y^k)\|^2  \\
		& +\beta_k \max\{f(x,y) - f(x^k,\tilde{y}^k) - \langle \xi_1^{k}, x - x^k \rangle - \epsilon, 0\}, 
	\end{aligned}
\end{equation}
satisfying one of the two inexact criteria. Under the assumption that $KT(x^k,y)$ is nonempty for all $y\in S(x^k)$, the description of iP-DCA on $({\rm VP})_{\epsilon}$ with $\epsilon\geq 0$ now follows:


\begin{algorithm}[H]
	\caption{iP-DCA for solving $({\rm VP})_{\epsilon}$}\label{DCA2}
	\begin{algorithmic}[1]
		\State Take an initial point $(x^0, y^0) \in X \times Y$; $\delta_\beta > 0$; an initial penalty parameter $\beta_0 > 0$, $tol > 0$.
		\For{$k=0,1,\ldots$}{
			\begin{itemize}
				\item[1.]  Solve the lower level problem $P_{x^k}$ defined in (\ref{LL-iter}) and
				obtain $\tilde{y}^k \in {S}(x^k)$ and $\gamma^k \in KT(x^k,\tilde{y}^k)$.
				\item[2.] Compute $\xi^k_i$, $i=0,1$ according to (\ref{subgvaluefunction}).				
				\item[2.] Obtain an inexact solution $(x^{k+1},y^{k+1})$ of (\ref{DCA2_subproblem}).
				\item[3.] Stopping test. Compute $t^{k+1}= \max\{ f(x^{k+1},y^{k+1}) - f(x^k,\tilde{y}^k) - \langle \xi_1^{k}, x^{k+1} - x^k \rangle - \epsilon,0 \}.$ Stop if $\max\{ \|(x^{k+1},y^{k+1}) - (x^{k},y^{k})\|,t^{k+1} \} < tol$.
				\item[4.] Penalty parameter update.
				Set
				\begin{equation*}
					\beta_{k+1} = \left\{
					\begin{aligned}
						&\beta_k + \delta_\beta, \qquad &&\text{if}~\max\{\beta_k, 1/t^{k+1}\} < \|(x^{k+1},y^{k+1}) - (x^{k},y^{k})\|^{-1}, \\
						&\beta_k, \qquad &&\text{otherwise}.
					\end{aligned}\right.
				\end{equation*}
				\item[5.] Set $k:=k+1$.
		\end{itemize}}
		\EndFor
	\end{algorithmic}
\end{algorithm}


Thanks to Proposition \ref{prop4.2}, when $\epsilon>0$, problem $({\rm VP})_{\epsilon}$ satisfies ENNAMCQ automatically. Moreover, 
since the partial subgradient formula (\ref{partialdnew}) holds, according to (\ref{valuefinc}) in Theorem \ref{Thm4.1},  the selection criteria in (\ref{subgvaluefunction}) implies that $\xi_1^k \in \partial v(x^k)$. 
Hence the convergence of iP-DCA for solving $({\rm VP})_{\epsilon}$ (Algorithm \ref{DCA2}) follows from Theorem \ref{thm1} and Proposition \ref{Prop1_bounded_beta}.

\begin{theorem}\label{convergence2}
	Assume that $F$ is bounded below on $C$.
	Let  $\{(x^k,y^k)\}$ be an iterate sequence generated by Algorithm \ref{DCA2}. Moreover assume that  the partial subgradient formula (\ref{partialdnew1}) holds at every iterate point $(x^k,y^k)$ and  $KT(x^k,y) \not =\emptyset$ for all $y\in S(x^k)$. Suppose that either $\epsilon >0$ or  $\epsilon = 0$ and the penalty sequence $\{\beta_k\}$ is bounded. Than any  accumulation point of $\{(x^k,y^k)\}$  is a KKT point of problem $({\rm VP})_{\epsilon}$.
\end{theorem}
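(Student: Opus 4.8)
The plan is to recognize that $({\rm VP})_\epsilon$ is exactly an instance of the abstract problem (DC) of Section~\ref{sec:DC}, so that Algorithm~\ref{DCA2} is iP-DCA (Algorithm~\ref{ipDCA}) specialized to that instance, and then to verify that the hypotheses of Theorem~\ref{thm1} (together with Proposition~\ref{Prop1_bounded_beta} when $\epsilon>0$) hold. Concretely I would set $z=(x,y)$, $\Sigma=C$, and make the DC identifications $g_0=F_1$, $h_0=F_2$, $g_1=f$, $h_1(x,y)=v(x)+\epsilon$, so that $f_0=F$ and $f_1=f-v-\epsilon$. Each of $g_0,h_0,g_1$ is convex on an open set containing $C$ by hypothesis, and $h_1$ is convex because $v$ is convex by Lemma~\ref{Prop1}; all four are locally Lipschitz on $C$ (the first three by Proposition~\ref{continuous}, the value function again by Lemma~\ref{Prop1}). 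Since $\tilde y^k\in S(x^k)$ gives $f(x^k,\tilde y^k)=v(x^k)$, the linearized constraint term appearing in \eqref{DCA2_subproblem} equals $g_1(z)-h_1(z^k)-\langle\xi_1^k,z-z^k\rangle$, so the subproblem \eqref{DCA2_subproblem} is literally the subproblem \eqref{subp}, and the penalty update matches that of Algorithm~\ref{ipDCA}.

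The only step needing genuine justification is that the subgradient $\xi_1^k$ chosen in \eqref{subgvaluefunction} is an admissible element of $\partial h_1(z^k)$, i.e.\ that $\xi_1^k\in\partial v(x^k)$ (the $y$-block of $\partial h_1$ being $\{0\}$). This is exactly where the two standing hypotheses enter. Solving $(P_{x^k})$ furnishes $\tilde y^k\in S(x^k)$, and the assumption $KT(x^k,y)\neq\emptyset$ for all $y\in S(x^k)$ lets us take $\gamma^k\in KT(x^k,\tilde y^k)$. Because the partial derivative formula \eqref{partialdnew1} holds at $(x^k,\tilde y^k)$, the inclusion \eqref{valuefinc} of Theorem~\ref{Thm4.1} applies and gives $\partial_x f(x^k,\tilde y^k)+\sum_{i=1}^{l}\gamma_i^k\partial_x g_i(x^k,\tilde y^k)\subseteq\partial v(x^k)$, so the selection in \eqref{subgvaluefunction} indeed lies in $\partial v(x^k)$. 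I would emphasize that only this inclusion direction is used, so the exactness condition \eqref{normalconeeq} is \emph{not} required; the choice $\xi_0^k\in\partial F_2(x^k,y^k)=\partial h_0(z^k)$ is immediate.

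It then remains to discharge the hypotheses of the convergence machinery and invoke it. The objective $f_0=F$ is bounded below on $C$ by assumption, and boundedness of $\{z^k\}$ both secures the existence of accumulation points and supports the outer-semicontinuity limiting argument of Theorem~\ref{thm1}. For the penalty sequence I split into the two cases of the statement: if $\epsilon>0$, Proposition~\ref{prop4.2} shows ENNAMCQ holds at \emph{every} point of $C$, hence at every accumulation point of $\{z^k\}$, and Proposition~\ref{Prop1_bounded_beta} then forces $\{\beta_k\}$ to be bounded; if $\epsilon=0$, boundedness of $\{\beta_k\}$ is assumed outright. In both cases Theorem~\ref{thm1} applies, so every accumulation point is a KKT point of the DC instance in the sense of Definition~\ref{Defn3.1}. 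Reading those KKT conditions back through $g_0-h_0=F_1-F_2$ and $g_1-h_1=f-v-\epsilon$ (with the $x$-block contribution $-\lambda\,\partial v(\bar x)\times\{0\}$) reproduces precisely the KKT system for $({\rm VP})_\epsilon$, which finishes the proof.

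The main obstacle is the single verification of the second paragraph: certifying that the \emph{implicitly defined} value-function subgradient used for linearization is an honest element of $\partial v(x^k)$. This is precisely what the partial derivative formula \eqref{partialdnew1} and the nonemptiness of $KT(x^k,\tilde y^k)$ buy us through Theorem~\ref{Thm4.1}; the remainder of the argument is a direct pattern-match onto Theorem~\ref{thm1} and Proposition~\ref{Prop1_bounded_beta}.
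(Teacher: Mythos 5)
Your proposal is correct and follows essentially the same route as the paper: the paper's own justification is precisely to identify $({\rm VP})_\epsilon$ as an instance of (DC), use \eqref{valuefinc} of Theorem~\ref{Thm4.1} under \eqref{partialdnew1} to certify $\xi_1^k\in\partial v(x^k)$, and then invoke Theorem~\ref{thm1} together with Proposition~\ref{prop4.2} and Proposition~\ref{Prop1_bounded_beta} in the case $\epsilon>0$. Your write-up is in fact more explicit than the paper's (e.g.\ in noting that only the inclusion direction of \eqref{valuefinc} is needed and that $f(x^k,\tilde y^k)=v(x^k)$ makes the subproblems coincide), and it inherits the same implicit boundedness assumption on the iterate sequence that the paper relies on when citing Proposition~\ref{Prop1_bounded_beta}.
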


We now assume 
that  the lower level objective $f$ is differentiable and $\nabla f$ is Lipschitz continuous with modulus $L_f$ on set $C$. Given a current iterate $(x^k, y^k)$, the next iterate 
$(x^{k+1},y^{k+1})$ can be returned as an approximate minimizer of subproblem (\ref{DCA2_subproblem}) with linearized $f$ given by	
\begin{equation}\label{lin_DCA2_subproblem}
	\begin{aligned}
		&\underset{(x,y)\in C}{\text{min}} ~~   F_1(x,y) -\langle \xi_0^k, (x,y)\rangle + \frac{\rho_k}{2} \|(x,y) - (x^k,y^k)\|^2 \\
		&  \quad  +\beta_k \max\{f(x^{k},y^{k}) + \langle \nabla f(x^{k},y^{k}), (x,y) -(x^{k},y^{k}) \rangle \\
		& \hspace*{180pt}  - f(x^k,\tilde{y}^k) - \langle \xi_1^{k}, x - x^k \rangle - \epsilon, 0\}.
	\end{aligned}
\end{equation}
Under the assumption that $KT(x^k,y) \not =\emptyset$ for all $y\in S(x^k)$, the iterate scheme of iPL-DCA on $({\rm VP})_{\epsilon}$ with $\epsilon\geq 0$ thus reads as:

\begin{algorithm}[H]
	\caption{iPL-DCA for solving $({\rm VP})_{\epsilon}$}\label{lin_DCA2}
	\begin{algorithmic}[1]
		\State Take an initial point $(x^0, y^0) \in X \times Y$;  $\delta_\beta, \sigma > 0$; $\beta_0 > 0$; $\rho_0 = \frac{1}{2}\beta_0L_f + \sigma$; $tol > 0$.
		\For{$k=0,1,\ldots$}{
			\begin{itemize}
				\item[1.]  Solve the lower level problem $P_{x^k}$ defined in (\ref{LL-iter}) and
				obtain $\tilde{y}^k \in {S}(x^k)$ and $\gamma^k \in KT(x^k,\tilde{y}^k)$.
				\item[2.] Compute $\xi^k_i$, $i=0,1$ according to (\ref{subgvaluefunction}).				
				\item[2.] Obtain an inexact solution $(x^{k+1},y^{k+1})$ of (\ref{lin_DCA2_subproblem}).
				\item[3.] Stopping test. Compute $t^{k+1} = \max\{ f(x^{k},y^{k}) + \langle \nabla f(x^{k},y^{k}), (x^{k+1},y^{k+1}) -(x^{k},y^{k}) \rangle - f(x^k,\tilde{y}^k) - \langle \xi_1^{k}, x^{k+1} - x^k \rangle - \epsilon,0 \}.$ Stop if $\max\{ \|(x^{k+1},y^{k+1}) - (x^{k},y^{k})\|,t^{k+1} \} < tol$.
				\item[4.] Penalty parameter update.
				Set
				\begin{equation*}\label{lin_beta_update}
					\begin{aligned}
						\beta_{k+1} &= \left\{
						\begin{aligned}
							&\beta_k + \delta_\beta, \qquad &&\text{if}~\max\{\beta_k, 1/t^{k+1}\} < \|(x^{k+1},y^{k+1}) - (x^{k},y^{k})\|^{-1}, \\
							&\beta_k, \qquad &&\text{otherwies}.
						\end{aligned}\right.\\
						\rho_{k+1} &= \frac{1}{2}\beta_{k+1}L_f + \sigma. \\
					\end{aligned}
				\end{equation*}
				\item[5.] Set $k:=k+1$.
		\end{itemize}}
		\EndFor
	\end{algorithmic}
\end{algorithm}

The convergence of iPL-DCA follows from Theorem \ref{convergence_thm} and Proposition \ref{lin_Prop_bounded_beta} directly.

\begin{theorem}
	Assume that $F$ is bounded below on $C$, $f$ is $L_f$ smooth on $C$.
	Let the sequence $\{(x^k,y^k)\}$ be generated by Algorithm \ref{lin_DCA2}. Moreover assume that  the partial subgradient formula (\ref{partialdnew1}) holds at every iterate point $(x^k,y^k)$ and  $KT(x^k,y) \not =\emptyset$ for all $y\in S(x^k)$. Suppose that either $\epsilon >0$ or $\epsilon = 0$ and the penalty parameter sequence $\{\beta_k\}$ is bounded. Then any accumulation point of $\{(x^k,y^k)\}$
	is a KKT point of problem $({\rm VP})_{\epsilon}$.
\end{theorem}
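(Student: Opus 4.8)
The plan is to recognize that $({\rm VP})_{\epsilon}$ is a particular instance of the abstract program (DC) and then invoke Theorem~\ref{convergence_thm} together with Proposition~\ref{lin_Prop_bounded_beta}, in complete parallel with the proof of Theorem~\ref{convergence2} for the non-linearized scheme. As already noted at the start of this section, $({\rm VP})_{\epsilon}$ is (DC) with $z=(x,y)$, $\Sigma=C$, $f_0=F_1-F_2=F$ and $f_1=f-v-\epsilon$; the DC decomposition I would use is $g_0=F_1$, $h_0=F_2$, $g_1=f$ and $h_1(x,y)=v(x)+\epsilon$. With this dictionary the subproblem~(\ref{lin_DCA2_subproblem}) is literally the linearized subproblem~(\ref{subplin}): since $\tilde y^k\in S(x^k)$ gives $v(x^k)=f(x^k,\tilde y^k)$, the constant $h_1(z^k)=v(x^k)+\epsilon$ in (\ref{subplin}) matches the term $f(x^k,\tilde y^k)+\epsilon$ in (\ref{lin_DCA2_subproblem}), and linearizing the $L_f$-smooth part $g_1=f$ reproduces the gradient term. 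Thus iPL-DCA for $({\rm VP})_{\epsilon}$ is exactly iPL-DCA applied to this (DC) instance.

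Next I would check the hypotheses of Theorem~\ref{convergence_thm}. Boundedness below of $f_0=F$ on $C$ is assumed, and the smoothness requirement on $g_1$ holds because $g_1=f$ is $L_f$-smooth on $C$. For the local Lipschitz requirement on $g_0,h_0,h_1$: the functions $F_1,F_2$ are convex on an open convex set containing $C$, hence locally Lipschitz there by Proposition~\ref{continuous}; and $h_1(x,y)=v(x)+\epsilon$ is locally Lipschitz because, by Lemma~\ref{Prop1}, the value function $v$ is convex and locally Lipschitz on $X$ (and is constant in $y$). The genuinely non-routine ingredient is the linearization of the concave part $h_1$: I must certify that the vector $\xi_1^k$ chosen in (\ref{subgvaluefunction}) is an \emph{actual} subgradient of $v$ at $x^k$. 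This is precisely where the partial derivative formula (\ref{partialdnew1}) enters: by the inclusion (\ref{valuefinc}) of Theorem~\ref{Thm4.1}, the fact that (\ref{partialdnew1}) holds at $(x^k,\tilde y^k)$ together with $\gamma^k\in KT(x^k,\tilde y^k)$ guarantees $\xi_1^k\in\partial v(x^k)$, so that $(\xi_1^k,0)\in\partial h_1(x^k,y^k)$ and the step is a bona fide DCA linearization of the concave part.

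Finally I would secure the boundedness of $\{\beta_k\}$ that Theorem~\ref{convergence_thm} requires. If $\epsilon=0$ this is assumed outright. If $\epsilon>0$, Proposition~\ref{prop4.2} shows that ENNAMCQ holds at every point of $C$, in particular at every accumulation point of $\{(x^k,y^k)\}$; combined with the local Lipschitz continuity of $g_0,h_0,h_1$ established above, Proposition~\ref{lin_Prop_bounded_beta} then forces $\{\beta_k\}$ to be bounded. With all hypotheses in place, Theorem~\ref{convergence_thm} yields that the accumulation point $\tilde z=(\bar x,\bar y)$ is a KKT point of the (DC) instance; translating the KKT system of Definition~\ref{Defn3.1} back through the dictionary (using $\partial h_1(\bar x,\bar y)=\partial v(\bar x)\times\{0\}$) gives exactly the KKT system defining a KKT point of $({\rm VP})_{\epsilon}$. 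One minor bookkeeping point: Theorem~\ref{convergence_thm} is phrased with $\{z^k\}$ bounded, but since the statement concerns a \emph{given} accumulation point, I would simply run the limiting argument along the convergent subconverging subsequence, whose boundedness is automatic, so that only local Lipschitz continuity near $\tilde z$ and boundedness of $\{\beta_k\}$ are actually invoked.

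I expect the main obstacle to be the clean justification that the lower-level KKT multiplier $\gamma^k$ delivers a genuine subgradient of the implicit value function $v$ at $x^k$ — that is, the correct application of Theorem~\ref{Thm4.1} under the partial derivative formula — because everything else reduces to matching notation with the abstract scheme and verifying the regularity conditions (local Lipschitz continuity of $v$, ENNAMCQ) collected earlier. Once the identity $\xi_1^k\in\partial v(x^k)$ is in hand, the convergence is an immediate specialization of Theorem~\ref{convergence_thm} and Proposition~\ref{lin_Prop_bounded_beta}.
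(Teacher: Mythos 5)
Your proposal is correct and follows exactly the route the paper itself takes: the paper's proof of this theorem is the single remark that convergence ``follows from Theorem~\ref{convergence_thm} and Proposition~\ref{lin_Prop_bounded_beta} directly,'' relying on the same identification of $({\rm VP})_{\epsilon}$ as an instance of (DC), on Theorem~\ref{Thm4.1} with the partial derivative formula to certify $\xi_1^k\in\partial v(x^k)$, and on Proposition~\ref{prop4.2} for ENNAMCQ when $\epsilon>0$. You have simply spelled out the details (including the harmless bookkeeping about working along a convergent subsequence) that the paper leaves implicit.
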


\section{Numerical experiments on SV bilevel model selection}\label{sec_SV}
In this section, we will conduct numerical experiments on the SV bilevel model selection  problem (SVBP). 
Extensive numerical experiments of iP-DCA on multiple hyperparameter selection models (e.g., elastic net, sparse group lasso, low-rank matrix completion and etc) are presented in \cite{icml2022}.

Let $x := (\mu,\bar{\mathbf{w}}) \in \mathbb{R}^{n+1}$, $y :=( \mathbf{w}^1,\dots, \mathbf{w}^T, \mathbf{c}) \in \mathbb{R}^{(n+1)T}$, $X = [\frac{1}{ \lambda_{ub}}, \frac{1}{\lambda_{lb}}] \times [\bar{\mathbf{w}}_{lb}, \bar{\mathbf{w}}_{ub}]$, $Y = \mathbb{R}^{(n+1)T}$,
\begin{equation*}
	\begin{aligned}
		f(x,y) 
		= \sum_{t=1}^{T} \left(\frac{\|\mathbf{w}^t\|^2}{2\mu} + \sum_{j\in \Omega_{trn}^t}  \max(1 - b_j( \mathbf{a}_j^T\mathbf{w}^t-c_t), 0 ) \right),
	\end{aligned}
\end{equation*} 
and
\begin{equation*}\label{svm_g}
	g(x,y) = \begin{pmatrix}
		g_1(x,y) \\ \vdots \\ g_T(x,y)
	\end{pmatrix} \quad \text{with } \quad	g_t(x,y) = \begin{pmatrix}
		-\bar{\mathbf{w}} -  \mathbf{w}^t \\
		\mathbf{w}^t - \bar{\mathbf{w}} 
	\end{pmatrix}, t= 1, \ldots, T.
\end{equation*}
Obviously  $F$, $f$ and $g$ are all convex functions defined an open set containing $X \times Y$,  and problem (SVBP) can be regarded as a special case of the DC bilevel program  (DCBP).  {Both $F$  and $f$ are bounded below on $X\times Y$}.	When $\bar{\mathbf{w}}_{ub} \ge \bar{\mathbf{w}}_{lb} > 0$, $\mathcal{F}(x) \not =\emptyset$ for an open set containing $X$. And since $b_j \in \{-1,1\}$, $f(x,y)$ is coercive and continuous with respect to lower-level variable $y$ for any given $x$ in an open set containing $X$, thus $S(x) \not =\emptyset$ for all $x$ in an open set containing $X$.
The function $g$ is smooth and $f$ is a sum of a smooth function and a function which is independent of variable $x$. Hence by Proposition \ref{partiald}, the partial differential formula  (\ref{partialdnew1}) holds at each point $(x,y)$.
Since the lower level constraints are affine,  KKT conditions holds at any $y \in S(x)$ for any  $x \in X$. 
Therefore, all conditions required by the convergence results of iP-DCA in Theorem 
\ref{convergence2} 
are satisfied.

We now describe how to calculate the main objects that are  required in  iP-DCA on problem (SVBP).  At the
 current iterate $x^k := (\mu^k,\bar{\mathbf{w}}^k)$,  solve $(P_{\mu^k, \bar{\mathbf{w}}^k})$,  the lower level problem parameterized  by $\mu^k,\bar{\mathbf{w}}^k$ and obtain 
 a solution $\tilde{y}^k := (\tilde{\mathbf{w}}^1,\dots, \tilde{\mathbf{w}}^T, \tilde{\mathbf{c}}) \in {S}(x^k)$ and a corresponding KKT multiplier $$(\gamma^k_{1,1},\ldots, \gamma^k_{1,T}, \gamma^k_{2,1}, \ldots, \gamma^k_{2,T})  \in KT(x^k,\tilde{y}^k),$$ where $\gamma^k_{1,t}$ and $\gamma^k_{2,t}$ are multipliers corresponding to constraints $-\bar{\mathbf{w}}^k -  \mathbf{w}^t \le 0, t= 1, \ldots, T$ and $\mathbf{w}^t - \bar{\mathbf{w}}^k \le 0, t= 1, \ldots, T$, respectively.  Since $F(x,y)$ is convex, we have $\xi_0^k=0$.  Since $f(x,y)$ and $g(x,y)$ are both smooth in variable $x$,  $\xi_1^k$ can be calculated by
	\begin{equation*}
	\xi_1^k =\begin{pmatrix}
		- \frac{\sum_{t=1}^{T} \|\tilde{\mathbf{w}}^t\|^2}{2(\mu^k)^2} \\
		- \sum_{t=1}^{T} \gamma^k_{1,t} - \sum_{t=1}^{T} \gamma^k_{2,t}
	\end{pmatrix}= \nabla_x f(x^k,\tilde{y}^k) +\sum_{t=1}^T \nabla_x g_t(x^k,\tilde{y}^k)^T  \gamma_t^k\subseteq  \partial v(x^k),
	\end{equation*} 
	where $\gamma_i^k:=(\gamma_{1,t}^k,\gamma_{2,t}^k)$.  With these objects calculated, we can then carry out the rest of steps in Algorithm \ref{DCA2}.

 Although the SV bilevel model selection  is a nonsmooth bilevel program,  by using auxiliary variables, the problem can be reformulated as a smooth bilevel program with a convex lower level program for which MPEC approach and the iPL-DCA are both applicable. This approach has been taken in  \cite{kunapuli2008classification}  in which  some nonlinear program solver has been used to solve the resulting MPEC.   
Because the smooth lower level objective in the reformulated bilevel program consists of $\sum_{t=1}^{T} \frac{\|\mathbf{w}^t\|^2}{2\mu}$, and the Lipschitz constant $L_f$ of the gradient of $\sum_{t=1}^{T} \frac{\|\mathbf{w}^t\|^2}{2\mu}$ with respect to variables $( \mathbf{w}^1,\dots, \mathbf{w}^T)$ and $\mu$ will be extremely large when $\mu$ is optimized over the interval with small values. According to the update rule for the regularizer parameter $\rho_k$ in the iPL-DCA, $\rho_k$, as the coefficient of the regularizer terms in the subproblem during each iteration, is linear w.r.t. $L_f$ and will be extremely large. For this reason, the iPL-DCA is not a good choice for this problem.
 In next subsection, we will compare our proposed algorithms with the MPEC approach considered in \cite{kunapuli2008classification}.
In numerical experiments, we will follow the suggestions given in \cite{kunapuli2008classification} to replace the complementarity constraints with the relaxed complementarity constraints.
As claimed by \cite{kunapuli2008classification}, such approach can facilitate an early termination of cross-validation and ease the difficulty of dealing the complementarity constraints for nonlinear program solver.
\subsection{Numerical tests}\label{numerical1}
All the numerical experiments are implemented on a laptop with Intel(R) Core(TM) i7-9750H CPU@ 2.60GHz and 32.00 GB memory. All the codes are implemented on MATLAB 2019b. The subproblems in iP-DCA are all convex optimization problems and we apply the Matlab software package SDPT3 \cite{SDPT3,SDPT32003} with default setting to solve them. MPEC problem 
is solved by using $\mathbf{fmincon}$ in Matlab optimization toolbox with setting $'\mathbf{Algorithm}'$ being $'\mathbf{interior-point}'$, $'\mathbf{MaxIterations}'$ being $200$ and $'\mathbf{MaxFunctionEvaluations}'$ being $10^6$. MPEC approach is implemented with low and strict tolerance by setting $'\mathbf{OptimalityTolerance}'$, $'\mathbf{ConstraintTolerance}'$, $'\mathbf{StepTolerance}'$ being $tol = 10^{-2}$, and $10^{-6}$. As $\mathbf{fmincon}$ needs extremely long time to solve large dimension MPEC problems, we first use small size datasets to conduct the numerical comparison between iP-DCA and MPEC approach. We test here three real datasets ``australian\_scale", ``breast-cancer\_scale" and ``diabetes\_scale" downloaded from the SVMLib repository  \cite{SVMLib}\footnote{ http://www.csie.ntu.edu.tw/~cjlin/libsvmtools/datasets/.}. 
Each dataset  is randomly split into a training set $\Omega$ with $|{\Omega}|=\ell_{train}$ data pairs, which is used in the cross-validation bilevel model and a hold-out test set  ${\cal N}$ with $|{\cal N}|=\ell_{test}$ data pairs. We give the descriptions of datasets in Table \ref{dataset}. For each dataset, we use a three-fold cross-validation in the SV bilevel model  selection problem  (SVBP), i.e. $T = 3$, and that each training fold consists of two-thirds of the total training data and validation fold consists of one-third of the total training data. We repeat the experiments 20 times for each dataset.
The values of parameters in SV bilevel model selection (SVBP) are set as: $\lambda_{lb} = 10^{-4}$, $\lambda_{ub} = 10^4$, $\bar{\mathbf{w}}_{lb} = 10^{-6}$ and $\bar{\mathbf{w}}_{ub} = 1.5$. 
For our approach, we test three different values of relaxation parameter $\epsilon \in \{0, 10^{-2}, 10^{-4}\}$ in $({\rm VP})_{\epsilon}$. And the value of relaxation parameter of the relaxed complementarity constraints in MPEC is set to be 
$10^{-6}$. The initial points for both iP-DCA and MPEC approach are chosen as $\lambda = 1$, $\bar{\mathbf{w}} = 0.1\mathbf{e}$, where $\mathbf{e}$ denotes the vector whose elements are all equal to $1$, and the values of other variables are all equal to $0$. These settings are used for all experiments. Parameters in iP-DCA are set as $\beta_0 = 1$, $\rho = 10^{-2}$ and $\delta_\beta = 5$. And we terminate iP-DCA when $t^{k+1} < 10^{-4}$ and $\|(x^{k+1},y^{k+1}) - (x^{k},y^{k})\|/(1+\|(x^{k},y^{k})\|) < tol$.

For each experiment, after we obtain the  hyperparameters  $\hat{\mu}$ and $\hat{\bar{\mathbf{w}}}$ from  implementing our proposed algorithm  and MPEC approach {for the  SV bilevel model selection}, 
we calculate their corresponding cross-validation error (CV error) and test error for comparing the performances of these two methods. 
For calculating the CV error, we put $\hat{\mu}$ and $\hat{\bar{\mathbf{w}}}$ back to the lower level problem in problem (SVBP)  to get the corresponding lower level solution $(\hat{\mathbf{w}}^1,\dots, \hat{\mathbf{w}}^T,\hat{\mathbf{c}})$  and calculate the corresponding cross-validation error $\Theta(\hat{\mathbf{w}}^1,\dots, \hat{\mathbf{w}}^T,\hat{\mathbf{c}})$.
Next, as in \cite{kunapuli2008classification} we implement a post-processing procedure to calculate the generalization error on the hold-out data for each instance. 
In particular as suggested in \cite{kunapuli2008classification}, since only two thirds of the data in $\Omega$ was used in each fold while  in testing we use all the training data from $\Omega$, we should solve the following support vector classification problem with  $\frac{3}{2}\hat{\lambda}=\frac{3}{2\hat{\mu}}$ and $\hat{\bar{\mathbf{w}}}$ as hyperparameter
$$ \underset{\tiny \begin{matrix}
				-\hat{\bar{\mathbf{w}}} \le \mathbf{w} \le \hat{\bar{\mathbf{w}}} \\ c \in \mathbb{R}	\end{matrix} }{\mathrm{min}} \left\{ \frac{3}{4\hat{\mu}}\|\mathbf{w}\|^2 + \sum_{j\in  \Omega}\max( 1 -b_j( \mathbf{a}_j^T\mathbf{w}-c),0) \right\}$$  to obtain the final classifier $(\hat{\mathbf{w}}, \hat{c})$. Then  the test (hold-out) error rate is calculated as:
\[
\hbox{Test error} = \frac{1}{\ell_{test}} \sum_{i\in {\cal N} } \frac{1}{2} |  \mathrm{sign}(\mathbf{a}_i^T\hat{\mathbf{w}}-\hat{c})-b_i |,
\]
where sign($x$) denote the sign function. Note that for each $(\mathbf{a}_i, b_i)$ in the test set ${\cal N}$,   $|\mathrm{sign}(\mathbf{a}_i^T\hat{\mathbf{w}}-\hat{c})-b_i|$ is either equal to zero or $2$ and hence the test error is the average misclassification by the final classifier.
The achieved numerical results averaged over 20 repetitions for each dataset are reported in Table \ref{numerical_result}. 

We compare the computational performance of iP-DCA with different values of $\epsilon$ and $tol$, i.e., $\epsilon = 0, 10^{-4}, 10^{-2}$ and $tol = 10^{-2}, 10^{-3}$ and the MPEC approach with different values of $tol$, i.e., $tol = 10^{-2}, 10^{-6}$. 
Observe from
Table \ref{numerical_result} that different values of $\epsilon$ and $tol$ do not influence the cross-validation error and test error obtained by iP-DCA a lot. The case $\epsilon = 0$ takes more time to achieve desired tolerance on some data sets.
This may be because that, as we have shown in Propositions \ref{Prop4.3} and \ref{prop4.2}, when $\epsilon = 0$, NNAMCQ never hold for the problem (VP), the ENNAMCQ always holds for the problem $({\rm VP})_{\epsilon}$ when $\epsilon > 0$. As a consequence, the problem (VP) is more ill-conditioned  compared to the problem $({\rm VP})_{\epsilon}$ with $\epsilon > 0$, then iP-DCA performs better on the problem $({\rm VP})_{\epsilon}$ with $\epsilon > 0$. Compared with MPEC approach, our proposed iP-DCA achieves a smaller cross-validation error, which is exactly the value of upper level objective of the bilevel problem (SVBP), on datasets ``breast-cancer\_scale'' and ``diabetes\_scale".  Furthermore, the time taken by our proposed iP-DCA is shorter than MPEC approach. The test errors of our proposed iP-DCA and MPEC approach are similar and iP-DCA achieves a smaller test error than MPEC approach on dataset ``diabetes\_scale". {Moreover} both iP-DCA and MPEC approach can obtain a relatively good solution without requiring a small $tol$.


Next, we are going to test our proposed iP-DCA on two large scale datasets ``mushrooms" and ``phishing" downloaded from the SVMLib repository. The descriptions of datasets are given in Table \ref{dataset}. We set $tol = 10^{-2}$ for these tests. The numerical results averaged over 20 repetitions for each data set are reported in Table \ref{numerical_result2}. It can be observed from Table \ref{numerical_result2} that different values of $\epsilon$ and $tol$ do not influence the cross-validation error obtained by iP-DCA much but the case $\epsilon = 0$ takes more time to achieve desired tolerance on some data sets. And iP-DCA can obtain a satisfactory solution within an acceptable time on large scale problems.

\begin{table}[!htbp]
	\centering
	\caption{Description of datasets used} \label{dataset}
	\begin{tabular}{p{120pt} p{30pt} p{30pt} p{30pt} c}
		\toprule
		Dataset & $\ell_{train}$ & $\ell_{test}$ & $n$ & T \\
		\midrule
		australian\_scale & 345 & 345 & 14 & 3 \\
		breast-cancer\_scale & 339 & 344 & 10 &3 \\
		diabetes\_scale & 384 & 384 & 8 & 3 \\
		mushrooms & 4062 & 4062 & 112 & 3 \\
		phishing & 5526 & 5529 & 68 &3 \\
		\bottomrule
	\end{tabular}
\end{table}

\begin{table}[!htbp]
	\centering
	\caption{Numerical results comparing iP-DCA and MPEC approach} \label{numerical_result}
	\resizebox{\textwidth}{!}{
		\begin{tabular}{cp{140pt} p{60pt} p{60pt} c}
			\toprule
			Dataset & Method & CV error & Test error & Time(sec) \\
			\midrule 
			\multirow{7}{*}{australian\_scale}
			& iP-DCA($\epsilon = 0$, $tol = 10^{-2}$) & 0.28 $\pm$ 0.03 & 0.15 $\pm$ 0.01 & 70.0 $\pm$ 116.9 \\ 
			& iP-DCA($\epsilon = 0$, $tol = 10^{-3}$) & 0.28 $\pm$ 0.03 & 0.15 $\pm$ 0.01 & 75.7 $\pm$ 123.8 \\ 
			& iP-DCA($\epsilon = 10^{-2}$, $tol = 10^{-2}$) & 0.28 $\pm$ 0.03 & 0.15 $\pm$ 0.01 & 10.1 $\pm$ 5.1 \\ 
			& iP-DCA($\epsilon = 10^{-2}$, $tol = 10^{-3}$) & 0.28 $\pm$ 0.03 & 0.15 $\pm$ 0.01 & 119.8 $\pm$ 55.8 \\ 
			& iP-DCA($\epsilon = 10^{-4}$, $tol = 10^{-2}$) & 0.28 $\pm$ 0.03 & 0.15 $\pm$ 0.01 & 58.5 $\pm$ 108.8 \\ 
			& iP-DCA($\epsilon = 10^{-4}$, $tol = 10^{-3}$) & 0.28 $\pm$ 0.03 & 0.15 $\pm$ 0.01 & 118.6 $\pm$ 123.6 \\ 
			& MPEC approach($tol = 10^{-2}$) & 0.28 $\pm$ 0.03 & 0.15 $\pm$ 0.01 & 130.0 $\pm$ 82.4 \\  
			& MPEC approach($tol = 10^{-6}$) & 0.28 $\pm$ 0.03 & 0.15 $\pm$ 0.01 & 391.3 $\pm$ 226.9 \\ 
			\midrule
			\multirow{7}{*}{breast-cancer\_scale}
			& iP-DCA($\epsilon = 0$, $tol = 10^{-2}$) & 0.07 $\pm$ 0.01 & 0.03 $\pm$ 0.01 & 26.2 $\pm$ 19.5 \\ 
			& iP-DCA($\epsilon = 0$, $tol = 10^{-3}$) & 0.06 $\pm$ 0.01 & 0.03 $\pm$ 0.01 & 69.6 $\pm$ 52.3 \\ 
			& iP-DCA($\epsilon = 10^{-2}$, $tol = 10^{-2}$) & 0.07 $\pm$ 0.01 & 0.03 $\pm$ 0.01 & 15.6 $\pm$ 4.5 \\ 
			& iP-DCA($\epsilon = 10^{-2}$, $tol = 10^{-3}$) & 0.07 $\pm$ 0.01 & 0.03 $\pm$ 0.01 & 106.5 $\pm$ 44.8 \\ 
			& iP-DCA($\epsilon = 10^{-4}$, $tol = 10^{-2}$) & 0.07 $\pm$ 0.01 & 0.03 $\pm$ 0.01 & 17.5 $\pm$ 6.3 \\ 
			& iP-DCA($\epsilon = 10^{-4}$, $tol = 10^{-3}$) & 0.06 $\pm$ 0.01 & 0.03 $\pm$ 0.01 & 72.5 $\pm$ 55.0 \\ 
			& MPEC approach($tol = 10^{-2}$) & 0.09 $\pm$ 0.01 & 0.03 $\pm$ 0.01 & 54.3 $\pm$ 13.7 \\  
			& MPEC approach($tol = 10^{-6}$) & 0.09 $\pm$ 0.01 & 0.03 $\pm$ 0.01 & 226.0 $\pm$ 108.4 \\    
			\midrule
			\multirow{7}{*}{diabetes\_scale}
			& iP-DCA($\epsilon = 0$, $tol = 10^{-2}$) & 0.55 $\pm$ 0.02 & 0.24 $\pm$ 0.02 & 14.5 $\pm$ 30.3 \\ 
			& iP-DCA($\epsilon = 0$, $tol = 10^{-3}$) & 0.55 $\pm$ 0.02 & 0.24 $\pm$ 0.02 & 24.2 $\pm$ 44.8 \\ 
			& iP-DCA($\epsilon = 10^{-2}$, $tol = 10^{-2}$) & 0.56 $\pm$ 0.02 & 0.24 $\pm$ 0.02 & 3.1 $\pm$ 0.4 \\ 
			& iP-DCA($\epsilon = 10^{-2}$, $tol = 10^{-3}$) & 0.56 $\pm$ 0.02 & 0.24 $\pm$ 0.02 & 65.7 $\pm$ 33.6 \\ 
			& iP-DCA($\epsilon = 10^{-4}$, $tol = 10^{-2}$) & 0.55 $\pm$ 0.02 & 0.24 $\pm$ 0.02 & 18.2 $\pm$ 27.1 \\ 
			& iP-DCA($\epsilon = 10^{-4}$, $tol = 10^{-3}$) & 0.55 $\pm$ 0.02 & 0.24 $\pm$ 0.02 & 45.1 $\pm$ 53.8 \\ 
			& MPEC approach($tol = 10^{-2}$) & 0.59 $\pm$ 0.02 & 0.26 $\pm$ 0.02 & 52.1 $\pm$ 42.2 \\  
			& MPEC approach($tol = 10^{-6}$) & 0.59 $\pm$ 0.02 & 0.26 $\pm$ 0.02 & 326.8 $\pm$ 312.0 \\   
			\bottomrule
		\end{tabular}
	}
\end{table}

\begin{table}[!htbp]
	\centering
	\caption{Numerical results of iP-DCA on datasets ``mushrooms" and ``phishing" with $tol = 10^{-2}$} \label{numerical_result2}
	\resizebox{\textwidth}{!}{
		\begin{tabular}{cp{100pt} p{100pt} p{100pt} c}
			\toprule
			Dataset & Method & CV error & Test error & Time(sec) \\
			\midrule 
			\multirow{3}{*}{mushrooms} & iP-DCA($\epsilon = 0$) & 6.36e-04 $\pm$ 5.94e-04 & 0 $\pm$ 0 & 334.3 $\pm$ 346.1 \\ 
			& iP-DCA($\epsilon = 10^{-2}$) & 1.53e-03 $\pm$ 3.85e-03 & 3.57e-04 $\pm$ 1.34e-03 & 109.3 $\pm$ 35.2 \\ 
			& iP-DCA($\epsilon = 10^{-4}$) & 6.38e-04 $\pm$ 6.08e-04 & 0 $\pm$ 0 & 162.9 $\pm$ 27.4 \\ 
			\midrule
			\multirow{3}{*}{phishing} & iP-DCA($\epsilon = 0$) & 0.29 $\pm$ 0.00 & 0.09 $\pm$ 0.00 & 357.9 $\pm$ 95.2 \\ 
			& iP-DCA($\epsilon = 10^{-2}$) & 0.29 $\pm$ 0.00 & 0.09 $\pm$ 0.00 & 222.1 $\pm$ 18.9 \\ 
			& iP-DCA($\epsilon = 10^{-4}$) & 0.29 $\pm$ 0.00 & 0.09 $\pm$ 0.00 & 215.4 $\pm$ 46.5 \\ 
			\bottomrule
	\end{tabular}}
\end{table}

{
\subsection{Further numerical tests}
In the follow-up paper \cite{icml2022}, experiments on the SV bilevel model selection  problem (SVBP) and comparisions with 
the state-of-the-art approaches in machine learning community, including the grid search, the random search and the tree-structured Parzen estimator Bayesian approach (TPE) \cite{bergstra2013making} have been conducted. In this subsection we summarize these results.

All the numerical experiments are implemented on a computer with Intel(R) Core(TM) i9-9900K CPU @ 3.60GHz and 16.00 GB memory. All the codes are implemented 
in Python and are available at \url{https://github.com/SUSTech-Optimization/VF-iDCA}.
Six real datasets ``liver-disorders\_scale'', ``diabetes\_scale'', ``breast-cancer\_scale'', ``sonar'', ``a1a'', ``w1a'' collected from the SVMLib repository are tested. For each dataset,  the SV bilevel model selection problem (SVBP) with three-fold and six-fold cross-validations, i.e. $T = 3, 6$ are solved respectively. Each dataset is randomly split in the same way as in Section \ref{numerical1}. The experiments are repeated 30 times for each dataset.
The values of parameters in SV bilevel model selection (SVBP) are set as: $\lambda_{lb} = 10^{-4}$, $\lambda_{ub} = 10^4$, $\bar{\mathbf{w}}_{lb} = 10^{-6}$ and $\bar{\mathbf{w}}_{ub} = 10$.  

For the implementation of iP-DCA, unlike in Section \ref{numerical1}, in which iP-DCA is applied to solve problem (SVBP) directly, for the numerical experiments in this part, the hyperparameter decoupling technique is applied to reformulate the problem (SVBP) into a  DC bilevel program  (DCBP) (see \cite{icml2022} for details) before applying iP-DCA. The strongly convex subproblem in iP-DCA at each iteration is solved by using the CVXPY package. 
Parameters in iP-DCA are set as $\epsilon = 0$, $\beta_0 = 1$ and $\delta_\beta = 5$. And  iP-DCA is terminated when $\max \{\|(x^{k+1},y^{k+1}) - (x^{k},y^{k})\|/(1+\|(x^{k},y^{k})\|),t^{k+1}\} < tol$.
The computational performance of iP-DCA is compared using two different values of $tol$, i.e., $tol = 10^{-1}, 10^{-2}$.

For the implementation of the grid search and the random search, the searches are run over two-dimension hyperparameter $\theta = (\theta_1, \theta_2)$ on \\ $\{-4,-3, \ldots, 3, 4\} \times \{ -6, -5, \ldots, 1, 2\}$ and with setting $\mu = 10^{\theta_1}$ and $\bar{\mathbf{w}} = (10^{\theta_2}, \dots, 10^{\theta_2})^\top$ in the problem (SVBP).
The subproblems in the grid search and the random search are all solved by using the CVXPY package.
And for the implementation of  TPE,  the hyperparameter $\log_{10}(\mu)$ in $[-4, 4]$, and the hyperparameter $\log_{10}(\bar{w}_i)$ in $[-6, 2]$ are searched, respectively. Because  TPE will be extremely slow when the dimension of the hyperparameters is too high,  the maximum number of iteration of  TBE is set to be $10$. And the TPE method is also tested on the simplified model with a two-dimension hyperparameter, that has  the same setting as the one solved by the search methods. This method is denoted by ``TPE2''. For this simplified ``TPE2'',  the maximum number of iterations of the TPE method are set to be $100$. 
The TPE method is implemented using the code collected from \url{https://github.com/hyperopt/hyperopt} and its subproblem is solved by using the CVXPY package. In the implementation of all the methods, the CVXPY package is set with using the open source solvers ECOS and SCS. 

The achieved numerical results of the  three-fold and the six-fold SV bilevel model selection problem averaged over 30 repetitions for each dataset are reported in Tables \ref{table:svm_full_3} and \ref{table:svm_full_6}, respectively. The cross-validation error (CV error) and the test error are calculated in the same way as in Section \ref{numerical1}.
Observe from
Table \ref{table:svm_full_3} and \ref{table:svm_full_6} that compared with the state of the art approaches,
including the grid search method, the random search method and the TPE, our proposed iP-DCA shows superiority by achieving a smaller cross-validation error and also a smaller test error. Furthermore, the time spent by our proposed iP-DCA is shorter than other approaches on most of the datasets.
It can be also observed that on all the datasets except ``breast-cancer\_scale'',
different values of $tol$ do not influence the cross-validation error and test error obtained by iP-DCA a lot. In view of the test error, iP-DCA can always obtain a relatively good solution without requiring a tight tolerance. This suggests to set a moderate algorithmic tolerance for iP-DCA when we apply it on practical problems for obtaining a satisfactory solution with shorter computation time.

\begin{table}[!htbp]
	\centering
	\caption{Numerical results of three-fold SV bilevel model selection problem on datasets ``liver-disorders\_scale'', ``diabetes\_scale'', ``breast-cancer\_scale'', ``sonar'', ``a1a'', ``w1a''} \label{table:svm_full_3}
	\resizebox{\textwidth}{!}{
		\begin{tabular}{cp{100pt} p{100pt} p{100pt} c}
			\toprule
			Dataset & Method & CV error & Test error & Time(sec) \\
			\midrule 
			\multirowcell{6}{liver-disorders\_scale \\ $\ell_{train} = 72$ \\ $\ell_{test} = 73$ \\ $n = 5$} & iP-DCA($tol = 10^{-1}$) & 0.53 $\pm$ 0.07 & 0.27 $\pm$ 0.03  & 0.09 $\pm$ 0.02 \\ 
			&iP-DCA($tol = 10^{-2}$)  & 0.53 $\pm$ 0.09 & 0.28 $\pm$ 0.05 & 0.20 $\pm$ 0.06 \\ 
			& Grid Search & 0.64 $\pm$ 0.08 & 0.34 $\pm$ 0.06 & 0.53 $\pm$ 0.01 \\ 
			& Random Search & 0.58 $\pm$ 0.06 & 0.32 $\pm$ 0.05 & 0.56 $\pm$ 0.03 \\ 
			& TPE & 0.65 $\pm$ 0.07 & 0.34 $\pm$ 0.05 & 0.37 $\pm$ 0.29 \\ 
			& TPE2 &  0.61 $\pm$ 0.07 & 0.33 $\pm$ 0.06 & 2.88 $\pm$ 1.16 \\ 
			\midrule
			\multirowcell{6}{diabetes\_scale \\ $\ell_{train} = 384$ \\ $\ell_{test} = 384$ \\ $n = 8$} & iP-DCA($tol = 10^{-1}$) & 0.48 $\pm$ 0.02 & 0.23 $\pm$ 0.01 & 0.18 $\pm$ 0.02 \\ 
			&iP-DCA($tol = 10^{-2}$)  & 0.48 $\pm$ 0.02 & 0.23 $\pm$ 0.01 & 0.28 $\pm$ 0.03 \\ 
			& Grid Search & 0.55 $\pm$ 0.03 & 0.33 $\pm$ 0.05 & 1.70 $\pm$ 0.11\\ 
			& Random Search & 0.56 $\pm$ 0.04 & 0.30 $\pm$ 0.06 & 1.83 $\pm$ 0.09 \\ 
			& TPE & 0.55 $\pm$ 0.03 & 0.29 $\pm$ 0.05 & 6.64 $\pm$ 4.30 \\ 
			& TPE2 & 0.54 $\pm$ 0.03 & 0.32 $\pm$ 0.06 & 18.67 $\pm$ 7.84 \\ 
			\midrule
			\multirowcell{6}{breast-cancer\_scale \\ $\ell_{train} = 388$ \\ $\ell_{test} = 345$ \\ $n = 10$} & iP-DCA($tol = 10^{-1}$) & 0.09 $\pm$ 0.01 & 0.04 $\pm$ 0.01 & 0.14 $\pm$ 0.01  \\ 
			&iP-DCA($tol = 10^{-2}$)  & 0.05 $\pm$ 0.01 & 0.03 $\pm$ 0.01 &  1.12 $\pm$ 0.59 \\ 
			& Grid Search &  0.08 $\pm$ 0.01 &  0.12 $\pm$ 0.06 & 1.63 $\pm$ 0.04 \\ 
			& Random Search & 0.09 $\pm$ 0.01 & 0.08 $\pm$ 0.09 & 1.80 $\pm$ 0.03 \\ 
			& TPE &  0.09 $\pm$ 0.01 &  0.10 $\pm$ 0.11 &  9.14 $\pm$ 4.55 \\ 
			& TPE2 &  0.07 $\pm$ 0.01 &  0.09 $\pm$ 0.10 & 14.72 $\pm$ 6.02 \\ 
						\midrule
			\multirowcell{6}{sonar \\ $\ell_{train} = 102$ \\ $\ell_{test} = 106$ \\ $n = 60$} & iP-DCA($tol = 10^{-1}$) & 0.03 $\pm$ 0.02 & 0.24 $\pm$ 0.04 & 0.48 $\pm$ 0.09 \\ 
			&iP-DCA($tol = 10^{-2}$)  &  0.00 $\pm$ 0.00 & 0.24 $\pm$ 0.04 & 2.22 $\pm$ 1.50 \\ 
			& Grid Search & 0.58 $\pm$ 0.08 & 0.40 $\pm$ 0.12 & 3.19 $\pm$ 0.10 \\ 
			& Random Search &  0.54 $\pm$ 0.06 & 0.34 $\pm$ 0.10 &  3.23 $\pm$ 0.06 \\ 
			& TPE &  0.64 $\pm$ 0.10 & 0.41 $\pm$ 0.12 & 40.77 $\pm$ 7.12 \\ 
			& TPE2 &  0.57 $\pm$ 0.08 &  0.37 $\pm$ 0.13 & 18.47 $\pm$ 6.84\\ 
						\midrule
			\multirowcell{6}{a1a \\ $\ell_{train} = 801$ \\ $\ell_{test} = 804$ \\ $n = 123$} & iP-DCA($tol = 10^{-1}$) & 0.27 $\pm$ 0.02  & 0.17 $\pm$ 0.01 & 1.10 $\pm$ 0.07 \\ 
			&iP-DCA($tol = 10^{-2}$)  &  0.27 $\pm$ 0.02 &  0.18 $\pm$ 0.01 & 10.17 $\pm$ 5.47 \\ 
			& Grid Search & 0.41 $\pm$ 0.02 & 0.24 $\pm$ 0.02 & 8.04 $\pm$ 0.15 \\ 
			& Random Search & 0.41 $\pm$ 0.02 & 0.22 $\pm$ 0.03 & 8.62 $\pm$ 0.30 \\ 
			& TPE &  0.42 $\pm$ 0.03 & 0.23 $\pm$ 0.03 &  176.59 $\pm$ 17.38 \\ 
			& TPE2 &  0.41 $\pm$ 0.02 & 0.24 $\pm$ 0.02 &  65.51 $\pm$ 16.24 \\ 
						\midrule
			\multirowcell{6}{w1a\\ $\ell_{train} = 1236$ \\ $\ell_{test} = 1241$ \\ $n = 300$} & iP-DCA($tol = 10^{-1}$) & 0.01 $\pm$ 0.00  & 0.02 $\pm$ 0.00 & 4.87 $\pm$ 0.51 \\ 
			&iP-DCA($tol = 10^{-2}$)  & 0.01 $\pm$ 0.00 & 0.02 $\pm$ 0.00 & 27.49 $\pm$ 7.31 \\ 
			& Grid Search & 0.06 $\pm$ 0.01 &  0.03 $\pm$ 0.00 & 20.21 $\pm$ 0.82 \\ 
			& Random Search &  0.06 $\pm$ 0.01 & 0.03 $\pm$ 0.00 &  20.44 $\pm$ 1.10 \\ 
			& TPE &  0.06 $\pm$ 0.01 &  0.03 $\pm$ 0.00 & 299.62 $\pm$ 78.72 \\ 
			& TPE2 & 0.06 $\pm$ 0.01 &  0.03 $\pm$ 0.00 & 86.10 $\pm$ 28.19 \\ 
			\bottomrule
	\end{tabular}}
\end{table}

\begin{table}[!htbp]
	\centering
	\caption{Numerical results of six-fold SV bilevel model selection problem on datasets ``liver-disorders\_scale'', ``diabetes\_scale'', ``breast-cancer\_scale'', ``sonar'', ``a1a'', ``w1a''} \label{table:svm_full_6}
	\resizebox{\textwidth}{!}{
		\begin{tabular}{cp{100pt} p{100pt} p{100pt} c}
			\toprule
			Dataset & Method & CV error & Test error & Time(sec) \\
			\midrule 
			\multirowcell{6}{liver-disorders\_scale \\ $\ell_{train} = 72$ \\ $\ell_{test} = 73$ \\ $n = 5$} & iP-DCA($tol = 10^{-1}$) & 0.41 $\pm$ 0.08 & 0.27 $\pm$ 0.04  & 0.18 $\pm$ 0.03 \\ 
			&iP-DCA($tol = 10^{-2}$)  &  0.41 $\pm$ 0.08 & 0.27 $\pm$ 0.05 &  0.33 $\pm$ 0.14 \\ 
			& Grid Search & 0.63 $\pm$ 0.08 & 0.33 $\pm$ 0.07 & 0.78 $\pm$ 0.02 \\ 
			& Random Search & 0.62 $\pm$ 0.07 &  0.31 $\pm$ 0.05 & 0.79 $\pm$ 0.04 \\ 
			& TPE & 0.63 $\pm$ 0.08 & 0.34 $\pm$ 0.05 &  1.06 $\pm$ 1.04 \\ 
			& TPE2 &  0.62 $\pm$ 0.07 & 0.32 $\pm$ 0.06 &  6.88 $\pm$ 4.15 \\ 
			\midrule
			\multirowcell{6}{diabetes\_scale \\ $\ell_{train} = 384$ \\ $\ell_{test} = 384$ \\ $n = 8$} & iP-DCA($tol = 10^{-1}$) & 0.43 $\pm$ 0.02 & 0.23 $\pm$ 0.01 & 0.35 $\pm$ 0.01 \\ 
			&iP-DCA($tol = 10^{-2}$)  &  0.43 $\pm$ 0.02 & 0.23 $\pm$ 0.01 & 0.56 $\pm$ 0.08 \\ 
			& Grid Search &  0.55 $\pm$ 0.03 & 0.32 $\pm$ 0.05 & 3.18 $\pm$ 0.14 \\ 
			& Random Search & 0.56 $\pm$ 0.03 & 0.31 $\pm$ 0.05 & 3.63 $\pm$ 0.21 \\ 
			& TPE & 0.55 $\pm$ 0.03 & 0.27 $\pm$ 0.06 &  29.52 $\pm$ 13.13 \\ 
			& TPE2 & 0.55 $\pm$ 0.03 & 0.33 $\pm$ 0.05 &  51.85 $\pm$ 20.49 \\ 
			\midrule
			\multirowcell{6}{breast-cancer\_scale \\ $\ell_{train} = 388$ \\ $\ell_{test} = 345$ \\ $n = 10$} & iP-DCA($tol = 10^{-1}$) & 0.08 $\pm$ 0.01 & 0.04 $\pm$ 0.01 & 0.29 $\pm$ 0.08   \\ 
			&iP-DCA($tol = 10^{-2}$)  & 0.03 $\pm$ 0.01 & 0.03 $\pm$ 0.01 &   2.01 $\pm$ 0.17 \\ 
			& Grid Search &  0.08 $\pm$ 0.02 &  0.15 $\pm$ 0.06 &  3.38 $\pm$ 0.25 \\ 
			& Random Search & 0.09 $\pm$ 0.02 & 0.07 $\pm$ 0.08 &  3.92 $\pm$ 0.29 \\ 
			& TPE &  0.09 $\pm$ 0.01 &   0.11 $\pm$ 0.13 &  25.96 $\pm$ 12.95 \\ 
			& TPE2 &  0.07 $\pm$ 0.02 & 0.08 $\pm$ 0.09 &  38.69 $\pm$ 16.27 \\ 
			\midrule
			\multirowcell{6}{sonar \\ $\ell_{train} = 102$ \\ $\ell_{test} = 106$ \\ $n = 60$} & iP-DCA($tol = 10^{-1}$) & 0.00 $\pm$ 0.00 & 0.23 $\pm$ 0.04 & 0.92 $\pm$ 0.02 \\ 
			&iP-DCA($tol = 10^{-2}$)  &  0.00 $\pm$ 0.00 & 0.23 $\pm$ 0.04 & 0.92 $\pm$ 0.02 \\ 
			& Grid Search &  0.59 $\pm$ 0.08 & 0.39 $\pm$ 0.11 & 6.57 $\pm$ 0.32 \\ 
			& Random Search &   0.54 $\pm$ 0.06 & 0.32 $\pm$ 0.08 & 6.44 $\pm$ 0.28 \\ 
			& TPE &   0.60 $\pm$ 0.07 & 0.39 $\pm$ 0.12 & 97.65 $\pm$ 31.37 \\ 
			& TPE2 &   0.57 $\pm$ 0.08 &  0.36 $\pm$ 0.12 & 58.19 $\pm$ 29.60 \\ 
			\midrule
			\multirowcell{6}{a1a \\ $\ell_{train} = 801$ \\ $\ell_{test} = 804$ \\ $n = 123$} & iP-DCA($tol = 10^{-1}$) & 0.19 $\pm$ 0.01  & 0.17 $\pm$ 0.01 & 4.22 $\pm$ 0.37 \\ 
			&iP-DCA($tol = 10^{-2}$)  & 0.18 $\pm$ 0.02  &  0.17 $\pm$ 0.01 &  63.01 $\pm$ 186.14 \\ 
			& Grid Search & 0.40 $\pm$ 0.02 &  0.25 $\pm$ 0.02 & 17.60 $\pm$ 0.36 \\ 
			& Random Search & 0.40 $\pm$ 0.02 &  0.21 $\pm$ 0.03 & 18.59 $\pm$ 0.42 \\ 
			& TPE &  0.41 $\pm$ 0.03 & 0.23 $\pm$ 0.03 &  312.63 $\pm$ 60.60 \\ 
			& TPE2 &  0.40 $\pm$ 0.02 & 0.24 $\pm$ 0.02 &  161.68 $\pm$ 42.67 \\ 
			\midrule
			\multirowcell{6}{w1a\\ $\ell_{train} = 1236$ \\ $\ell_{test} = 1241$ \\ $n = 300$} & iP-DCA($tol = 10^{-1}$) & 0.01 $\pm$ 0.00  & 0.02 $\pm$ 0.00 & 26.74 $\pm$ 3.67 \\ 
			&iP-DCA($tol = 10^{-2}$)  &  0.01 $\pm$ 0.00 & 0.02 $\pm$ 0.00 & 97.50 $\pm$ 35.99 \\ 
			& Grid Search & 0.05 $\pm$ 0.00 & 0.03 $\pm$ 0.00 & 44.29 $\pm$ 1.39 \\ 
			& Random Search &  0.05 $\pm$ 0.00 &  0.03 $\pm$ 0.00 & 61.80 $\pm$ 2.91 \\ 
			& TPE &  0.05 $\pm$ 0.01 & 0.03 $\pm$ 0.00 &  703.72 $\pm$ 82.75 \\ 
			& TPE2 &  0.05 $\pm$ 0.00 & 0.03 $\pm$ 0.00 & 190.04 $\pm$ 39.00 \\ 
			\bottomrule
	\end{tabular}}
\end{table}

}

\section{Concluding remarks}
Motivated by hyperparameter selection problems, in this paper we develop two DCA type algorithms for solving the DC bilevel program. Our numerical experiments on the SV bilevel model selection show that our approach is promising. Due to the space limit, we are not able to present more studies for more complicated models in hyperparameter selection problems. We hope to study these problems in our future work.  Note that all of our results except the result on the constraint qualification ENNAMCQ in Proposition \ref{prop4.2} can be applied to the case where there are also  some extra upper level  constraints  
$G(x,y):=(G_1(x,y), \dots, G_k(x,y)) \leq 0$ as long as each function $G_i(x,y)$ is a difference of convex function.  In this case,  the corresponding approximate bilevel program has an extra DC constraint $G(x,y)\leq 0$.  Although the constraint qualification ENNAMCQ no longer holds automatically,   it is reasonable to impose ENNAMCQ for the corresponding approximate bilevel program to hold.

        {\bf Acknowledgements}. We thank the guest editor and three reviewers for their 
helpful and constructive comments that have helped improve this paper substantially. The alphabetical order of the authors
indicates their equal contributions to the paper.\vspace*{-0.1in}


\begin{thebibliography}{}
	\bibitem{allende2013solving}
	Allende, G., Still, G.:
	{ Solving bilevel programs with the KKT-approach}.
	{Mathematical  Programming}. \textbf{138}, 309-332 (2013)
	
	\bibitem{KuangYe}
	Bai, K., Ye, J.J.:
	{Directional necessary optimality conditions for bilevel programs.}
	{Mathematics of Operations Research}, \textbf{47}, 1169-1191 (2022)


	\bibitem{bard1998practical}
	Bard, J.:
	Practical Bilevel Optimization: Algorithms and Applications. Kluwer Academic Publishers, Dordrecht (1998)
	
	
	\bibitem{FOMbook} Beck, A.: First-order methods in optimization. Society for Industrial and Applied Mathematics (2017).
	
	\bibitem{BenAyed}
	Ben-Tal, A., Blair, C.:
	{Computational difficulties of bilevel linear programming.}
	{Operations Research}. \textbf{38}, 556-560 (1990)
	
	\bibitem{Bennett} Bennett, K.P., Hu, J., Ji., X., Kunapuli, G., Pang, J.-S.: Model selection via bilevel optimization, in The 2006 IEEE International Joint Conference on Neural Network Proceedings. 1922-1929 (2006)
	
	\bibitem{bergstra2013making} Bergstra, J., Yamins, D., Cox, D.: Making a science of model search: Hyperparameter optimization in hundreds of dimensions for vision architectures, In: International Conference on Machine Learning. \textbf{28}(1), 115-123 (2013)
	
	
	
	
	
	\bibitem{SVMLib}  Chang, C-C., Lin, C-J.: LIBSVM : a library for support vector machines. ACM Transactions on Intelligent Systems and Technology. \textbf{2}(3), 1-27 (2011)
	
	\bibitem{clarke1990optimization} Clarke, F.H.: Optimization and Nonsmooth Analysis. Society for Industrial and Applied Mathematics, Philadelphia (1990)
	
	\bibitem{ClarkeLSW} Clarke, F.H., Ledyaev, Y.S.,  Stern, R.J., Wolenski, P.R.: Nonsmooth Analysis and Control Theorey. Springer Science \& Business Media, New York (1998)
	
	\bibitem{ColsonMarcotteSavard}
	Colson, B., Marcotte, P., Savard, G.:
	An overview of bilevel optimization. Annals of Operations Research. \textbf{153}(1-2), 235-256 (2007)
	
	
	\bibitem{Dempe2002}  Dempe, S.: Foundations of Bilevel Programming. Kluwer Academic Publishers, Dordrecht (2002)
	
	
	\bibitem{Dem-Dut} Dempe, S., Dutta, J.: {Is bilevel programming a special case of mathematical programming with equilibrium constraints?}. Mathematical Programming.  \textbf{131}, 37-48 (2012) 
	
	\bibitem{Dempebook} {Dempe, S., Zemkoho, A.}: { Bilevel Optimization: Advances and Next Challenges}.  Springer Optimization and its Applications, vol. 161 (2020)
	\bibitem{dempe2015bilevel}
	Dempe, S., Kalashnikov, V., Pérez-Valdés, G., Kalashnykova, N.:
	{Bilevel Programming Problems.}
	{Energy Systems},  Springer Science \& Business Media, Berlin (2015)
	
	
	
	
	
	
	
	\bibitem{Franceschi}
	Franceschi, L., Frasconi, P., Salzo, S., Grazzi, R.,  Pontil, M.:
	{Bilevel programming for hyperparameter optimization and meta-learning.}
	In: International Conference on Machine Learning. \textbf{80}, 1568-1577 (2018)

	
	
	\bibitem{icml2022} Gao, L., Ye, J.J., Yin, H., Zeng, S., Zhang, J.: Value function based difference-of-convex algorithm for bilevel hyperparameter selection problems. In: International Conference on Machine Learning. \textbf{162}, 7164-7182 (2022)
	
	
	
	\bibitem{calmness_multifunction} Henrion, R., Jourani, A., Outrata, J.V.: On the calmness of a class of multifunctions. SIAM Journal on Optimization. \textbf{13}, 603–618 (2002)
	
	\bibitem{DC_overview} Horst, R., Thoai, N.V.: DC programming: overview.  Journal of Optimization Theory and Applications. \textbf{103}(1), 1–43 (1999)
	
	\bibitem{Jourani} Jourani, A.: Constraint qualifications and Lagrange multipliers in nondifferentiable programming problems. Journal of Optimization Theory and Applications. \textbf{81}, 533-548 (1994)
	
	\bibitem{Kunapuli}  Kunapuli, G.: A bilevel optimization approach to machine learning. Ph.D Thesis. (2008)
	
	\bibitem{kunapuli2008classification} 
	{Kunapuli, G., Bennett, K.P., Hu, J.,  Pang, J-S.:}
	{Classification model selection via bilevel programming}.
	{Optimization Methods and Software.} \textbf{23}, 475-489 (2008)
	
	\bibitem{kunapuli2008bilevel}Kunapuli, G.,  Bennett, K.P., Hu, J., Pang, J-S.: Bilevel model selection for support vector machines. In: CRM proceedings and lecture notes. \textbf{45}, 129-158 (2008)
	
	
	
	
	\bibitem{LLNashBilevel}
	Lampariello, L., Sagratella, S.:
	{ Numerically tractable optimistic bilevel problems}.
	Computational Optimization  and Applications. \textbf{76}, 277-303 (2020)
	
	\bibitem{LinXuYe} Lin, G., Xu, M., Ye, J.J.: On solving simple bilevel programs with a nonconvex lower level program. Mathematical Programming. \textbf{144}, 277-305 (2014)
	
	
	
	\bibitem{Liu}
	Liu, R., Mu, P., Yuan, X., Zeng, S., Zhang, J.:
	{A generic first-order algorithmic framework for bi-level programming beyond lower-level singleton.}
	In: International Conference on Machine Learning. \textbf{119}, 6305-6315 (2020)
	
	\bibitem{Liu2}
	Liu, R., Mu, P., Yuan, X., Zeng, S., Zhang, J.:
	{A generic descent aggregation framework for gradient-based bi-level optimization.}
IEEE Transactions on Pattern Analysis and Machine Intelligence. (2022)

	
	
	\bibitem{MPEC1} Luo, Z-Q.,  Pang, J-S.,  Ralph, D.: Mathematical Programs with Equilibrium Constraints.
	Cambridge University Press, Cambridge (1996)
	
	
	
	
	
	\bibitem{Mirrlees} Mirrlees, J.A.: The theory of moral hazard and unobservable behaviour: Part I. The Review of Economic Studies. \textbf{66}, 3-21 (1999)
	
	
	\bibitem{Mooreth}  Moore, G.: Bilevel programming algorithms for machine learning model selection. Ph.D Thesis. (2010)
	\bibitem{Moore} Moore, G., Bergeron, C.,   Bennett, K.P.: Model selection for primal SVM.
	Machine Learning. \textbf{85}, 175-208 (2011)
	
	
	
%
	
	
	
	\bibitem{nie2017bilevel}
	Nie, J., Wang, L., Ye, J.J.:
	{ Bilevel polynomial programs and semidefinite relaxation methods}.
	{ SIAM Journal on Optimization}. \textbf{27}, 1728-1757 (2017)
	
	\bibitem{nie2021bilevel}
	Nie, J., Wang, L., Ye, J.J., Zhong, S.:
	{ A Lagrange Multiplier Expression Method for Bilevel Polynomial Optimization,}
	{SIAM Journal on Optimization}. \textbf{31}(3), 2368-2395 (2021).
	
		\bibitem{HO2020} Okuno,  T.,  Kawana, A.: Bilevel optimization of regularization hyperparameters in machine learning.  In: Bilevel Optimization: Advances and Next Challenges, Ch.~6.
	Springer Optimization and its Applications, vol. 161 (2020).
	
	\bibitem{Outrata1990}  Outrata, J. V.: On the numerical solution of a class of Stackelberg problems, ZOR - Methods and Models of Operations Research. \textbf{34}, 255–277 (1990)
	
	\bibitem{MPEC2} Outrata, J., Kocvara, M., Zowe, J.: Nonsmooth Approach to Optimization Problems with
	Equilibrium Constraints: Theory, Applications and Numerical Results. Kluwer Academic Publishers, Boston (1998)
	

	\bibitem{pang2017} Pang, J.S., Razaviyayn, M. and Alvarado, A.:  Computing B-stationary points of nonsmooth DC programs. Mathematics of Operations Research. \textbf{42(1)}, 95-118 (2017)
	
	
	\bibitem{rockafellar}  Rockafellar, R.T.: Convex Anlysis. Princeton University Press, Princeton (1970)
	\bibitem{rockafellar1974conjugate} Rockafellar, R.T.:  Conjugate duality and optimization. CBMS-NSF Regional Conference Series in Applied Mathematics. \textbf{16}, 1-74 (1974)
	
	\bibitem{Shimizu}
	Shimizu, K., Ishizuka, Y., Bard, J.:
	{Nondifferentiable and Two-level Mathematical Programming.}
	{Kluwer Academic Publishers, Dordrecht} (1997)
	
	\bibitem{Stackelberg} Stackelberg, H.: Market Structure and Equilibrium. Springer Science \& Business Media, Berlin (2010)
	\bibitem{cvxbook} Stephen, B., Vandenberghe, L.: Convex Optimization. Cambridge University Press, Cambridge (2004)
	
	\bibitem{le2014dc} Thi, H.A.L., Dinh, D.T.: Advanced Computational Methods for Knowledge Engineering, DC programming and DCA for general DC programs. pp. 15-35. Springer, Cham, Switzerland (2014)
	
	\bibitem{ThiDinh} Thi, H.A.L., Dinh, T.P.: DC programming and DCA: thirty years of developments. Math. Program. \textbf{169}, 5-68 (2018)
	
	\bibitem{SDPT3}  Toh, K.C.,  Todd,  M.J.,  Tutuncu, R.H.: SDPT3 — a Matlab software package for semidefinite programming, Optimization Methods and Software. \textbf{11}, 545–581 (1999)
	
	\bibitem{SDPT32003}  Tutuncu, R.H.,  Toh, K.C., Todd,  M.J.: Solving semidefinite-quadratic-linear programs using SDPT3. Mathematical Programming, Series B. \textbf{95}, 189–217 (2003)
	
	
	
%
	\bibitem{Yebook} {Ye, J.J.}:
	{ Constraint qualifications and optimality conditions in bilevel optimization}. In: Bilevel Optimization: Advances and Next Challenges, Ch.~8.
	Springer Optimization and its Applications, vol. 161 (2020).
	
	
	\bibitem{ye1995}  Ye J.J., Zhu, D. L.: Optimality conditions for bilevel programming problems. Optimization.
	\textbf{33}, 9–27 (1995)
	
	
	
	
	
	
	
	
	
	
	
	
	
\end{thebibliography}
\end{document}